\newtheorem{theorem}{Theorem}[section]
\newtheorem{lemma}[theorem]{Lemma}
\newtheorem{proposition}[theorem]{Proposition}
\newtheorem{corollary}[theorem]{Corollary}
\theoremstyle{definition}
\newtheorem{remark}[theorem]{Remark}
\newtheorem{definition}[theorem]{Definition}
\newtheorem{notation}[theorem]{Notation}
\newtheorem{example}[theorem]{Example}
\newtheorem{question}[theorem]{Question}
\newtheorem{fact}[theorem]{Fact}
\newcommand{\cn}{\sqrt{n}}
\numberwithin{equation}{section}
\begin{document}
\title{Almost Fra\"iss\'e Banach spaces}

\author{Valentin Ferenczi}
\address{Universidade de S\~ao Paulo, Instituto de Matem\'atica e Estat\'istica, São Paulo, Brasil (IME-USP)}
\email{ferenczi@ime.usp.br}

\author{Michael A. Rinc\'on-Villamizar}
\address{Universidade de S\~ao Paulo, Instituto de Matem\'atica e Estat\'istica, São Paulo, Brasil (IME-USP)\\
Universidad Industrial de Santander (UIS), Escuela de Matem\'aticas, Bucaramanga, Colombia}
\email{marinvil@alumni.usp.br, marinvil@uis.edu.co}

\thanks{The first author was supported by the S\~ao Paulo Research Foundation (FAPESP), grant 2016/25574-8, and by Conselho Nacional de Desenvolvimento Cient\'ifico e Tecnol\'ogico (CNPq), grant 303731/2019-2.. The second author was supported by the S\~ao Paulo Research Foundation (FAPESP), grants 2016/25574-8 and 2021/01144-2, and Universidad Industrial de Santander (UIS)}

\subjclass{Primary 46B04, 46A22; secondary 37B05, 54H20}


\begin{abstract}

  Continuing with the study of Approximately ultrahomogeneous and Fra\"iss\'e Banach spaces introduced by V. Ferenczi, J. L\'opez-Abad, B. Mbombo and S. Todorcevic, we define  formally weaker and in some aspects more natural properties of Banach spaces which we call Almost ultrahomogeneity and the Almost Fra\"iss\'e Property. We obtain relations between these different homogeneity properties of a space $E$ and relate them to certain pseudometrics on the class $\mathrm{Age}(E)$ of finite dimensional subspaces of $E$. We prove that ultrapowers of an almost Fra\"iss\'e Banach space are ultrahomogeneous. We also study two properties called finitely isometrically extensible and almost finitely isometrically extensible, respectively, and prove that approximately ultrahomogeneous Banach spaces are finitely isometrically extensible.

Finally, we study $\omega$-categoricity in Banach spaces, and give a proof that a  Banach space is Fra\"iss\'e   if and only if it is approximately ultrahomogeneous and $\omega$-categorical. 
\end{abstract}

\keywords{Fra\"iss\'e Banach spaces, Almost Fra\"iss\'e Banach spaces, $\omega$-categorical Banach spaces, ultrahomogeneity}

\maketitle

\tableofcontents

\section{Introduction}

 The standard terminology and notation of Banach spaces used in this paper may be found in \cite{AK}. Recall that a Banach space is called \textit{transitive} if for any two points on the unit sphere, there is an onto isometry defined on the whole space that sends one onto the other. In 1932, S. Mazur conjectured that separable transitive Banach spaces are Hilbert \cite[p. 151]{banach}. This is the Mazur rotation problem (two surveys on this problem and related topics are \cite{cabello-sanchez} and \cite{ferenczi-cabello-beata}). Of course Hilbert spaces are transitive, and it is known that there exist non-separable and non-hilbertian transitive Banach spaces. The paper outgrowths from the study of multidimensional aspects of the Mazur rotation problem, as initiated in \cite{ferenczi et all} and pursued in \cite{ferenczi-lopez}.

 The notation we shall use for Fra\"iss\'e theory in Banach spaces is from \cite{ferenczi et all} and is recalled here.  We denote by $\mathrm{Age}(E)$ and $\mathrm{Age}_k(E)$ the set of all finite-dimensional subspaces of a Banach space $E$ and the set of all $k$-dimensional subspaces of $E$, respectively. Given two Banach spaces $X$ and $E$, and $\delta \geq 0$, a $\delta$-isometry from $X$ to $E$ is a linear map $T\colon X\to E$ such that $\frac{1}{1+\delta}\|x\|\leq\|Tx\|\leq(1+\delta)\|x\|$ for all $x\in X$; $\mathrm{Emb}(X,E)$ ($\mathrm{Emb}_\delta(X,E)$) is the (possibly empty) set of isometries ($\delta$-isometries, resp.) from $X$ into $E$. Also, $\mathrm{Isom}(E)$ ($\mathrm{Isom}_{\varepsilon}(E)$) denotes the set of all surjective isometries ($\varepsilon$-isometries) on $E$.

\subsection{Ultrahomogeneity, Approximately ultrahomogeneity, FIE-ness and almost FIE-ness}

An approach towards the Mazur rotation problem is by considering properties which are stronger than transitivity, satisfied by Hilbert spaces and such that there are non-separable non-Hilbertian examples: if we can prove that the only separable space with such a property is the Hilbert space, then arguably this is a first step towards a positive answer to Mazur problem in which the separability hypothesis was used.  A fundamental example is the multidimensional version of transitivity, which is called \textit{ultrahomogeneity}, \cite[Definition 2.2]{ferenczi et all}. A Banach space $E$ is \textit{ultrahomogeneous} (\textbf{UH}) when for every $X\in\mathrm{Age}(E)$ every element of $\mathrm{Emb}(X,E)$ can be extended to an element of $\mathrm{Isom}(E)$, or equivalently the group $\mathrm{Isom}(E)$ acts transitively on the metric space $\mathrm{Emb}(X,E)$ for all $X\in\mathrm{Age}(E)$. Of course, Hilbert spaces are ultrahomogeneous. Also ultrapowers of the Gurarij space or of $L_p$-spaces are non-separable examples of ultrahomogeneous spaces for $p\not\in2\mathbb N+4$,
\cite[Proposition 4.13]{aviles et al} and \cite[Corollary 2.16]{ferenczi et all}.

 To go on, let us recall a weaker form of the notion of transitivity: namely the property of almost-transitivity. A Banach space is called \textit{almost transitive} if the orbits of the isometry group of the space are dense in the unit sphere.  A. Pe\l czy\'nsky and S. Rolewicz proved that the Banach space $L_p(0,1)$ is almost transitive \cite{pel-rolewicz} when $1 \leq p<+\infty$. W. Lusky gave a multidimensional version of this result by showing that the group $\mathrm{Isom}(L_p(0,1))$ acts almost transitively on each metric space $\mathrm{Emb}(X,L_p(0,1))$ whenever $X$ is a finite dimensional subspace of $L_p(0,1)$ and $p=2$ or $p\not\in 2\mathbb N$ \cite{lusky}. In the language introduced in \cite{ferenczi et all}, Lusky's result says that for those values of $p$, $L_p(0,1)$ is \textbf{AUH}: a Banach space $E$ is \textit{approximately ultrahomogeneous} (in short \textbf{AUH}) if for each $X\in\mathrm{Age}(E)$, the group $\mathrm{Isom}(E)$ acts almost transitively on $\mathrm{Emb}(X,E)$, that is, if for each $\varepsilon>0$ and $\phi,\psi\in\mathrm{Emb}(X,E)$, there exists $T\in\mathrm{Isom}(E)$ such that $\|T\phi-\psi\|<\varepsilon$. In Lusky's paper only a formally weaker extension property of those $L_p$ spaces  is stated, which we call here 
 \textit{almost ultrahomogeneity}: a Banach space $E$ is \textit{almost ultrahomogeneous} (almost \textbf{UH}) if for each $\varepsilon>0$, each $X\in\mathrm{Age}(E)$ and each $\phi\in\mathrm{Emb}(X,E)$, there exists $T\in\mathrm{Isom}_\varepsilon(E)$ such that $T|_X=\phi$.
Therefore almost \textbf{UH} is a natural multidimensional counterpart of almost transitivity. So, we have the following relationships:
\[
\begin{array}
[c]{cccccccc}
  \mbox{Ultrahomogeneous} \Rightarrow   & \textbf{AUH}  &\Rightarrow \mbox{Almost Ultrahomogeneous} \\
\Downarrow  &  \\
\mbox{Transitive} &  \Rightarrow &   \mbox{Almost transitive}\\
\end{array}
\]

Motivated by the three aforementioned properties, we start by introducing two properties which we call \textit{finitely isometrically extensible} (\textbf{FIE}) and almost \textit{finitely isometrically extensible} (\textbf{aFIE}). A Banach space $E$ is \textbf{FIE} if any isometry defined on a finite dimensional subspace of $E$ can be extended to a norm-one operator defined on the whole space.
The \textbf{aFIE}-property is an $\varepsilon$-version of the \textbf{FIE}-property: a Banach space $E$ is \textbf{aFIE} if for any $\varepsilon>0$, any isometry defined on a finite dimensional subspace of $E$ can be extended to an operator defined on the whole space with norm at most $1+\varepsilon$. Contrarily to the transitivity property, the \textbf{FIE} is preserved by taking $1$-complemented subspaces, and in particular the 1-dimensional part of the \textbf{FIE}-property always holds: any isometry defined on a 1-dimensional subspace of $E$ always can be extended to a one-norm operator defined on $E$. It is clear that ultrahomogeneous spaces are \textbf{FIE}, but also other spaces, such as 
  $c_0(\Gamma)$ for any non-empty set $\Gamma$, are \textbf{FIE}. Also, we prove that almost ultrahomogeneous reflexive spaces are \textbf{FIE}. We do not know if 
\textbf{aFIE} Banach spaces are in fact \textbf{FIE}, but we note that the two notions coincide in the case of reflexive Banach spaces.  
 
 \subsection{Fra\"iss\'eness, Almost Fra\"iss\'eness and $\omega$-categoricity of Banach spaces}
 V. Ferenczi, J. L\'opez-Abad, B. Mbombo and S. Todorcevic defined and studied Fra\"iss\'e Banach spaces, proving that $L_p(0,1)$ is Fra\"iss\'e when $p=2$ or $p\not\in 2\mathbb N$ \cite[Theorem 4.1]{ferenczi et all}.  According to \cite{ferenczi et all}, a Banach space $E$ is \textit{Fra\"iss\'e} if for every $\varepsilon>0$ and every dimension $k$, there exists $\delta>0$ such that if $X$ is a $k$-dimensional subspace of $E$ the action of $\mathrm{Isom}(E)$ on $\mathrm{Emb}_\delta(X,E)$ has dense orbits. Another example of Fra\"iss\'e Banach space is the Gurarij space $\mathbb G$.
The existence of separable Fra\"iss\'e Banach spaces other than $\mathbb G$ and $L_p(0,1)$ is a main open problem \cite[Problem 2.9]{ferenczi et all}. 
 
 As developed in \cite{ferenczi et all}, Fra\"iss\'e Banach spaces are extremely important in relation to certain Ramsey properties of the classes ${\rm Emb}(X,E)$ and to the extreme amenability of the topological group ${\rm Isom}(E)$. Here we focus on by the following isometric properties of Fra\"iss\'e spaces:
\begin{enumerate}
    \item In Fra\"iss\'e spaces $E$, the Banach-Mazur and a restricted version of the Kadets pseudometrics are uniformly equivalent on the set of finite dimensional subspaces of $E$;
    \item Separable Fra\"iss\'e spaces are isometrically determined,  among Fra\"iss\'e spaces, by their local structure;
    \item Separable spaces who are finitely representable in a Fra\"iss\'e space $E$ can be isometrically embedded into $E$;
    \item For every non-free ultrafilter $\mathcal U$ on $\mathbb N$, the ultrapower $E_\mathcal U$ of a Fra\"iss\'e space $E$ is \textbf{UH};
\end{enumerate}
See \cite[Theorems 2.12 and 2.19, Propositions 2.13 and 2.15]{ferenczi et all}.
It is  natural to ask whether the full strength of the Fra\"iss\'e property is needed for these results. 
Finding weaker conditions could possibly lead to interesting new properties as well as shed some new light on the Fra\"iss\'e property.
With this aim in mind,
we introduce in this paper the \textit{almost Fra\"iss\'e Banach spaces} (\textbf{AF} in short). A Banach space $E$ is \textbf{AF} if for every $\varepsilon>0$ and every dimension $k\in\mathbb N$, there is $\delta>0$ such that if $X\in\mathrm{Age}_k(E)$ and $\phi\in\mathrm{Emb}_\delta(X,E)$, there is $T\in\mathrm{Isom}_\varepsilon(E)$ which extends $\phi$. We also study a weakening of this property which we call here \textit{weak almost Fra\"iss\'e} (weak \textbf{AF} shortly). A Banach space is weak \textbf{AF} whenever $X \in \mathrm{Age}(E)$ and $\varepsilon>0$, there exists $\delta>0$ such that if $\phi\in{\rm Emb}_\delta(X,E)$, there exists $T \in {\rm Isom}_{\varepsilon}(E)$ such that
  $T|_{X}=\phi$. This variation imitates the notion of weak Fra\"iss\'e space from \cite{ferenczi et all}. As it is expected, every Fra\"iss\'e space is \textbf{AF}, every weak Fra\"iss\'e space is weak \textbf{AF} and \textbf{AF} spaces are weak \textbf{AF}. When $\mathrm{Age}_k(E)$ is compact for each $k$ (with respect to the Banach-Mazur distance), the two above class introduced coincides, as we prove below.
This is similar to \cite[Theorem 2.12]{ferenczi et all}, where it was proved that the Fra\"iss\'e and weak Fra\"iss\'e properties coincide if $\mathrm{Age}_k(E)$ is compact for each $k$.
The almost and weak almost Fra\"iss\'e properties only involve $\varepsilon$-isometries, and in this sense, may seem more natural than their Fra\"iss\'e counterparts, where the definition envolves a mixture of isometries with $\varepsilon$-isometries.
We highlight the following facts about these classes of spaces (See Subsections \ref{subsections pseudodistances} and \ref{ultrapowers}).

\begin{enumerate}
    \item In \textbf{AF} spaces, we introduce some analogous versions of the Kadets pseudometric on the closure of $\mathrm{Age}(E)$ with respect to the topology induced by the Banach-Mazur distance, and we prove that these pseudometrics are in fact metrics when $E$ is weak \textbf{AF}.
    \item We prove that for every non-free ultrafilter $\mathcal U$ on $\mathbb N$, the ultrapower $E_\mathcal U$ of an \textbf{AF} space $E$ is \textbf{AF} and  \textbf{UH}.
\end{enumerate}
 Answering one of our questions relative to the properties of Fra\"iss\'e spaces listed above, item (2)  indicates that a formally weaker property than Fra\"iss\'e-ness is sufficient to obtain ultrahomogeneous ultrapowers. Also, an interesting aspect of these weak Fra\"iss\'e property is that it will be characterized by properties of 
the compact spaces $\overline{{\rm Age}_k(E)}^{{\rm BM}}$, instead of the possibly non closed ${\rm Age}_k(E)$.

We end the paper by giving a proof of a characterization of Fra\"iss\'e spaces through $\omega$-categoricity of Banach spaces. To fix ideas, let $n\in\mathbb N$ and $E$ be a Banach space. Consider the natural action of ${\rm Isom}(E)$ on $S_E^n\coloneqq S_E\times\stackrel{n}{\cdots}\times S_E$, that is,
\begin{gather}\label{action on S_E}
    (T,(x_1,\ldots,x_n))\mapsto(Tx_1,\ldots,Tx_n).
\end{gather}
Take a norm on $E^n$ such that the above action is isometric (for example the $\ell_2$-sum norm). A Banach space $E$ is called \textit{$\omega$-categorical} if for each $n\in\mathbb N$, the metric quotient of $S_E^n/\mathrm{Isom}(E)$ is compact; see the very general study of I. Ben-Yaacov et al in \cite{ben-yaacov I}. 
All $L_p(0,1)$-spaces, $1 \leq p<+\infty$, and the Gurarij space $\mathbb G$ are examples of $\omega$-categorical \cite[Section 17]{ben-yaacov I} and \cite[Section 2]{ben-yaacov II}, respectively. 

In the last section, we establish the following result, which was claimed but not explicitely proved by I. Ben Yaacov \cite{ben-yaacov III}: a Banach space is Fra\"iss\'e if and only if it is $\omega$-categorical and approximately ultrahomogeneous. As observed by him, this gives a model theoretic proof of the result from \cite{ferenczi et all}
that the spaces $L_p(0,1)$ are Fra\"iss\'e when $p=2$ or $p\not\in 2\mathbb N$. 
 
\section{Finitely isometrically extensible Banach spaces}

A drawback of the ultrahomogenety properties such as $\textbf{UH}$, $\textbf{AUH}$, or the Fra\"iss\'e property, is that they are not preserved under norm $1$ projections. More regularity might be desirable for abstract results about classes of spaces. For this reason we consider the following:

 \begin{definition}
A Banach space $E$ is \textit{finitely isometrically extensible} (\textbf{FIE}) if for all $X\in\mathrm{Age}(E)$ and all $\phi\in\mathrm{Emb}(X,E)$, there is an one-norm operator from $E$ to $E$ which extends $\phi$.
\end{definition}

Note that the \textbf{FIE}-property is equivalent to the following: 
any isometry between finite dimensional subspaces of $E$ can be extended to a one-norm operator from $E$ to $E$. 

\begin{example}
Clearly, \textbf{UH} Banach spaces enjoy the \textbf{FIE}-property. In particular Hilbert spaces are \textbf{FIE}. 
\end{example}

\begin{example}
Any 1-universally separably injective is \textbf{FIE}. For instance
$C(K)$, where $K$ is an extremely disconnected compact Hausdorff space, is \textbf{FIE} since every Banach space in this class is 1-universally separably injective \cite[Proposition 1.19]{aviles et al}. 
\end{example}

The previous example is a particular case of the following general statement. Recall that for a non-empty set $\Gamma$, $c_0(\Gamma,X)$ denotes the Banach space of all maps $f\colon\Gamma\to X$ with the property that for each $\varepsilon>0$, the set $\{\gamma\in\Gamma\,\colon\, \|f(\gamma)\|\geq\varepsilon\}$ is finite, endowed with the supremum norm. When $X$ is the scalar field, we just write $c_0(\Gamma)$.

\begin{example}\label{FIE-ness of sums of c_0(a)}
If $\Lambda$ and $\Gamma$ are non-empty sets, then $\ell_\infty(\Lambda,c_0(\Gamma,X))$ is \textbf{FIE} whenever $X$ is 1-universally separably injective.

Indeed, let $E\in\mathrm{Age}(\ell_\infty(\Lambda,c_0(\Gamma,X)))$ and $T\in\mathrm{Emb}(E, \ell_\infty(\Lambda,c_0(\Gamma,X)))$ be given. Note that $T$ determines a family of operators
$\{T^\lambda\}_{\lambda\in\Lambda}$  from $E$ to $c_0(\Gamma,X)$ such that $Tx=(T^\lambda(x))_{\lambda\in\Lambda}$ for all $x\in E$ and 
$\|T\|=\sup_{\lambda\in\Lambda}\|T^\lambda\|$.

If $\lambda\in\Lambda$ is given, there is a family $(T_\gamma^\lambda)_{\gamma\in\Gamma}$ of operators from $E$ to $X$ such that $T^\lambda(x)=(T_\gamma^\lambda(x))_{\gamma\in\Gamma}$ for all $x\in E$.  Also note that $T_\gamma^\lambda\to{\bf0}$ in the SOT-topology and since $E$ is finite-dimensional, $T_\gamma^\lambda\to{\bf0}$ in the norm operator topology.
For each $\gamma\in\Gamma$, let $\tilde T_\gamma^\lambda\colon \ell_\infty(\Lambda,c_0(\Gamma,X))\to X$ be a norm-preserving extension of $T_\gamma^\lambda$ and define $\tilde T^\lambda\colon \ell_\infty(\Lambda,c_0(\Gamma,X))\to c_0(\Gamma,X)$ by $\tilde T^\lambda(x)=(\tilde T_\gamma^\lambda(x))_{\gamma\in\Gamma}$ if $x\in \ell_\infty(\Lambda,c_0(\Gamma,X))$. Clearly, $\tilde T^\lambda$ is linear and $\|\tilde T^\lambda\|=\sup_{\gamma\in\Gamma}\|\tilde T_\gamma^\lambda\|\leq1$. Finally, let $\tilde T\colon \ell_\infty(\Lambda,c_0(\Gamma,X))\to \ell_\infty(\Lambda,c_0(\Gamma,X))$ be given by $\tilde T(x)=(\tilde T^\lambda(x))_{\lambda\in\Lambda}$ 
for $x\in \ell_\infty(\Lambda,c_0(\Gamma,X))$. It is easy to see that $\tilde T$ extends $T$ and $\|\tilde T\|=1$.
\end{example}

\begin{proposition}\label{fie-ness y complementos}
Let $E$ be a \textbf{FIE} Banach space. If $F$ is a 1-complemented subspace of $E$, then $F$ is \textbf{FIE}.
\end{proposition}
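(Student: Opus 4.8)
The plan is to use the $1$-complementation directly: let $P\colon E\to E$ be a norm-one projection with range $F$, and let $\iota\colon F\hookrightarrow E$ be the inclusion. Given $X\in\mathrm{Age}(F)$ and $\phi\in\mathrm{Emb}(X,F)$, I first observe that $X$ is also a finite-dimensional subspace of $E$ and that $\iota\circ\phi\in\mathrm{Emb}(X,E)$, since composing with the isometric inclusion $\iota$ does not change norms. Since $E$ is \textbf{FIE}, there is a one-norm operator $\Psi\colon E\to E$ with $\Psi|_X=\iota\circ\phi$. The natural candidate for the extension of $\phi$ to $F$ is then $T\coloneqq P\circ\Psi\circ\iota\colon F\to F$.

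The key steps are: (1) check that $T$ indeed maps $F$ into $F$, which is immediate from $\mathrm{range}(P)=F$; (2) check $\|T\|\le 1$, which follows from $\|P\|=\|\iota\|=1$ and $\|\Psi\|=1$ since $T$ is a composition of three operators each of norm at most $1$; and (3) check $T|_X=\phi$. For the last point, if $x\in X$ then $\Psi(\iota x)=\iota(\phi x)$ by construction, and since $\phi x\in F$ we have $P(\iota(\phi x))=\phi x$ because $P$ restricted to $F$ is the identity. Hence $Tx=P\Psi\iota x=P\iota\phi x=\phi x$, as required. Finally I should note that $\|T\|=1$ exactly (not just $\le 1$): already on any one-dimensional subspace of $X$, $T$ acts as an isometry onto its image, so $\|T\|\ge 1$, giving $\|T\|=1$; alternatively one can just invoke the remark in the text that the $1$-dimensional case of \textbf{FIE} is automatic. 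Either way $T$ is a one-norm operator extending $\phi$, so $F$ is \textbf{FIE}.

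I do not expect any serious obstacle here; the argument is a routine "sandwich by the projection" computation. The only point that requires a moment's care is making sure the domain and codomain bookkeeping is consistent — that is, that the isometry $\phi$ between finite-dimensional subspaces of $F$ is genuinely witnessed inside $E$ (so that \textbf{FIE} of $E$ applies) and that projecting the resulting global extension back down via $P$ preserves both the norm bound and the extension property on $X$. Once those identifications are made explicit, the proof closes immediately.
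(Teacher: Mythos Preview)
Your proof is correct and follows essentially the same approach as the paper: extend the isometry using the \textbf{FIE} property of $E$, then sandwich by the inclusion and the $1$-projection to obtain a norm-one extension inside $F$. The paper's proof is terser but identical in content.
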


\begin{proof}
    Let $W\in\mathrm{Age}(F)$ and $T\in\mathrm{Emb}(W,F)$ be given. By the \textbf{FIE}-ness of $E$, there is a
    one-norm operator $\hat T\colon E\to E$ which extends $T$. Let $P_F\colon E\to F$ be a 1-projection. We set $\tilde T=P_F\circ\hat T\circ i_F\colon F\to F$, where $i_F\colon F\to E$ is the natural inclusion. Clearly, $\tilde T$ extends $T$ and $\|\tilde T\|=1$.
\end{proof}

\section{Almost ultrahomogeneous and almost finitely isometrically extensible Banach spaces}

In order to state the next results, we introduce the weakening of ultrahomogeneity that is one of our main focus in this paper, and is closer to the original definition of the Gurarij space as well to the way the results of Lusky \cite{lusky} regarding $L_p(0,1)$-spaces were originally formulated.
\begin{definition}
 A Banach space is called \textit{almost ultrahomogeneous} (almost \textbf{UH}), if for all $\varepsilon>0$ and all $X\in\mathrm{Age}(E)$ and all $\phi\in\mathrm{Emb}(X,E)$, there exists $T\in\mathrm{Isom}_\varepsilon
(E)$ such that $T|_E=\phi$.
\end{definition}

\begin{remark}\label{AUH implies UH}
Every \textbf{AUH} Banach space is almost \textbf{UH}. Indeed, assume that $E$ is \textbf{AUH} and let 
$\varepsilon>0$, $X\in\mathrm{Age}(E)$ and $\phi\in\mathrm{Emb}(X,E)$ be given. Since $E$ is \textbf{AUH}, there is $U\in\mathrm{Isom}(E)$ such that $\|U|_X-\phi\|<\varepsilon/(2\dim X)$. If $P_X\colon E\to X$ is a projection with $\|P_X\|\leq\dim X$, we set $T=U-(U-\phi)\circ P_X.$ It is not difficult to check that $T\in\mathrm{Isom}_\varepsilon(E)$ and $T|_X=\phi$.
\end{remark}

To relate the 
almost \textbf{UH} property to the \textbf{FIE}-property, we need the following
approximate version of this property.

\begin{definition}
A Banach space $E$ has the \textit{almost finitely isometrically extensible} (\textbf{aFIE}) if it satisfies the following condition: for all $\varepsilon>0$ and all $X\in\mathrm{Age}(E)$ and all $\phi\in\mathrm{Emb}(X,E)$, there exists $T\colon E\to E$ such that $T|_E=\phi$ and $\|T\|\leq1+\varepsilon$.
\end{definition}

Note that every \textbf{FIE} Banach space is \textbf{aFIE}. 
This notion easily relates to ultrahomogeneity properties as follows:

\begin{fact} Any almost \textbf{UH} space is \textbf{aFIE}. \end{fact}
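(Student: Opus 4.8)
The plan is to deduce \textbf{aFIE}-ness directly from the definition of almost \textbf{UH}, essentially reading off that an $\varepsilon$-surjective-isometry extending $\phi$ is in particular a norm-$(1+\varepsilon)$ operator extending $\phi$. So first I would fix $\varepsilon > 0$, a finite-dimensional $X \in \mathrm{Age}(E)$, and $\phi \in \mathrm{Emb}(X,E)$. By the almost \textbf{UH} property there is $T \in \mathrm{Isom}_\varepsilon(E)$ with $T|_X = \phi$.

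The only point requiring a line of justification is that an $\varepsilon$-isometry of $E$ onto itself has operator norm at most $1 + \varepsilon$. This is immediate from the definition given in the introduction: $T \in \mathrm{Isom}_\varepsilon(E)$ means in particular that $\|Tx\| \leq (1+\varepsilon)\|x\|$ for all $x \in E$, hence $\|T\| \leq 1 + \varepsilon$. Combined with $T|_X = \phi$, this shows that $T$ witnesses the \textbf{aFIE}-property for the given data $(\varepsilon, X, \phi)$. Since $\varepsilon$, $X$, and $\phi$ were arbitrary, $E$ is \textbf{aFIE}.

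I expect no real obstacle here: the statement is essentially a definitional unwinding, since \textbf{aFIE} asks only for a norm-$(1+\varepsilon)$ extension while almost \textbf{UH} provides an $\varepsilon$-surjective-isometry extension, which is a priori a stronger object. The proof below simply records this.

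\begin{proof}
Let $\varepsilon > 0$, $X \in \mathrm{Age}(E)$, and $\phi \in \mathrm{Emb}(X,E)$ be given. Since $E$ is almost \textbf{UH}, there exists $T \in \mathrm{Isom}_\varepsilon(E)$ with $T|_X = \phi$. By definition of an $\varepsilon$-isometry, $\|Tx\| \leq (1+\varepsilon)\|x\|$ for every $x \in E$, so $\|T\| \leq 1 + \varepsilon$. Thus $T \colon E \to E$ extends $\phi$ and has norm at most $1 + \varepsilon$, which is exactly the condition required for the \textbf{aFIE}-property. As $\varepsilon$, $X$, and $\phi$ were arbitrary, $E$ is \textbf{aFIE}.
\end{proof}
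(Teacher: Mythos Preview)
Your proof is correct and is exactly the definitional unwinding the paper has in mind; indeed, the paper states this fact without proof, treating it as immediate from the definitions, which is precisely what your argument confirms.
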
 

\begin{example}
For $p\in 2\mathbb N$ and $p\geq4$, $L_p(0,1)$ is not \textbf{aFIE}. Actually, for any $C>1$, by \cite[Proposition 2.10]{ferenczi et all}, there is a finite dimensional subspace such that if $T\colon X\to L_p(0,1)$ is an isometry and if $\tilde T$ extends $T$, then $\|\tilde T\|\geq C$.
\end{example}

The upcoming result is the corresponding version of Proposition \ref{fie-ness y complementos} for \textbf{aFIE} Banach spaces.

\begin{proposition}\label{afie-ness y complementos}
Let $E$ be a Banach space and $F$ be a subspace of $E$.
\begin{enumerate}
    \item If $E$ is \textbf{aFIE} and $F$ is 1-complemented in $E$, then $F$ is \textbf{aFIE}.
    \item If $E$ is \textbf{aFIE} and $F$ is 1-complemented in $E^{**}$, then $F$ is \textbf{FIE}. In particular, reflexive \textbf{aFIE} spaces are \textbf{FIE}.
\end{enumerate}
\end{proposition}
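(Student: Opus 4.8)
The strategy for part (1) is to mimic the proof of Proposition \ref{fie-ness y complementos}, only carrying the error term $\varepsilon$ through. Given $W \in \mathrm{Age}(F)$, $T \in \mathrm{Emb}(W,F)$ and $\varepsilon > 0$, I would view $W$ and $T(W)$ as finite dimensional subspaces of $E$ and $T$ as an element of $\mathrm{Emb}(W,E)$; by \textbf{aFIE}-ness of $E$ there is an operator $\hat T \colon E \to E$ extending $T$ with $\|\hat T\| \le 1+\varepsilon$. Composing with a $1$-projection $P_F \colon E \to F$ and the inclusion $i_F \colon F \to E$, the operator $\tilde T = P_F \circ \hat T \circ i_F \colon F \to F$ still extends $T$ (since $T$ already maps into $F$, so $P_F$ acts as the identity on its range) and has $\|\tilde T\| \le \|P_F\|\,\|\hat T\| \le 1+\varepsilon$. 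Since $\varepsilon$ was arbitrary, $F$ is \textbf{aFIE}. This part is routine.

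For part (2), the key idea is that being $1$-complemented in $E^{**}$ lets us upgrade \textbf{aFIE} to \textbf{FIE} by a weak$^*$-compactness argument. First, I would observe that \textbf{aFIE} passes from $E$ to $E^{**}$: indeed, an isometry on a finite dimensional subspace $W$ of $E^{**}$ can be approximated, via the principle of local reflexivity, by almost-isometries into $E$, which extend (up to $1+\varepsilon$) to $E$ and hence to $E^{**}$; alternatively one uses that $E^{**}$ is finitely representable in $E$ together with a diagonal/ultrapower argument. Then, by part (1) applied inside $E^{**}$, the space $F$, being $1$-complemented in $E^{**}$, is \textbf{aFIE}. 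It remains to show that a dual-type compactness gives \textbf{FIE} on $F$: given $W \in \mathrm{Age}(F)$ and $T \in \mathrm{Emb}(W,F)$, for each $n$ pick an extension $\tilde T_n \colon F \to F$ with $\|\tilde T_n\| \le 1+1/n$. Viewing these as operators into $E^{**}$ and then as elements of the ball of $\mathcal{L}(F, E^{**}) = \mathcal{L}(F, (E^{*})^{*})$, the unit ball is compact in the topology of pointwise weak$^*$-convergence (a Banach--Alaoglu / Tychonoff argument, since $F$ may be taken separable-generated here, or one argues coordinatewise on a suitable net). Extract a cluster point $\tilde T$; it extends $T$ on $W$, has $\|\tilde T\| \le 1$, and maps into $F$ because $F$ is weak$^*$-closed in $E^{**}$ when it is $1$-complemented by a weak$^*$-continuous projection — and the canonical projection onto a $1$-complemented subspace of a bidual can be taken weak$^*$-continuous after averaging, or one simply composes with $P_F$ again at the end. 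Thus $\tilde T \colon F \to F$ is a norm-one extension of $T$, so $F$ is \textbf{FIE}. The ``in particular'' follows since a reflexive space is $1$-complemented (indeed equal) in its own bidual.

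The main obstacle I anticipate is the compactness step in part (2): one must be careful about which topology makes the relevant ball of operators compact and about ensuring the limiting operator still takes values in $F$ rather than merely in $E^{**}$. The clean way around this is to first secure the extension into $E^{**}$ as a norm-one operator via weak$^*$-compactness of the ball of $\mathcal{L}(F,E^{**})$ in the point-weak$^*$ topology, and only afterwards compose with the $1$-projection $E^{**} \to F$; this reduces the issue to the already-handled $1$-complementation argument and avoids any subtlety about weak$^*$-closedness of $F$.
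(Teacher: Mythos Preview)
Your argument for part (1) is correct and identical to the paper's.

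For part (2), there is a genuine gap in your first step: the claim that \textbf{aFIE} passes from $E$ to $E^{**}$. Your sketch via local reflexivity produces, from an isometry $T\colon W\to E^{**}$, only an \emph{almost}-isometry between finite-dimensional subspaces of $E$; but the \textbf{aFIE} hypothesis applies to genuine isometric embeddings, so you cannot invoke it. The alternative ``ultrapower/diagonal'' hint is equally vague and runs into the same obstacle. Whether $E^{**}$ is always \textbf{aFIE} when $E$ is may well be true, but you have not proved it, and it is not needed.

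The detour is in fact entirely avoidable, and once removed your core idea is essentially the paper's. Given $Y\in\mathrm{Age}(F)$ and $T\in\mathrm{Emb}(Y,F)$, apply \textbf{aFIE}-ness of $E$ directly: $T$ is an isometric embedding of $Y$ into $E$, so there exist $\hat T_n\colon E\to E$ extending $T$ with $\|\hat T_n\|\le 1+1/n$; restrict to obtain operators $F\to E\hookrightarrow E^{**}$ and then run exactly the weak$^*$-compactness argument you describe, finishing with the $1$-projection $P\colon E^{**}\to F$. The paper does the same thing, only organising the limit differently: instead of a sequence of globally defined extensions, it takes for each finite-dimensional $Z\supset Y$ in $F$ an extension $T_Z\colon Z\to E$ with $\|T_Z\|\le 1+1/\dim Z$, and defines $\psi(y)=w^*\text{-}\lim_{\mathcal U} T_Z(y)$ along an ultrafilter refining the Fr\'echet filter on $\{Z:Y\subset Z\}$, before composing with $P$. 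Both routes yield a norm-one operator $F\to F$ extending $T$; the paper's directed-system formulation has the minor advantage of never needing compactness of an operator ball, only Banach--Alaoglu pointwise.
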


\begin{proof}
\begin{enumerate}
  \item If $\varepsilon>0$, $Y\in\mathrm{Age}(F)$ and $T\in\mathrm{Emb}(Y,F)$ are given, by the \textbf{aFIE}-ness of $E$, there is an operator $\hat T\colon E\to E$ such that $\hat T|_Y=T$ and $\|\hat T\|\leq1+\varepsilon$. Let $P_F\colon E\to F$ be a 1-projection. We set $\tilde T=P_F\circ\hat T\circ i_F\colon F\to F$, where $i_F\colon F\to E$ is the natural inclusion. Clearly, $\tilde T$ extends $T$ and $\|\tilde T\|\leq1+\varepsilon$.
    \item  Let $Y\in\mathrm{Age}(F)$ and $T\in\mathrm{Emb}(Y,F)$ be given. For each $Z\in\mathrm{Age}(F)$ with $Y\subset Z$, there exists $T_Z\colon Z\to E$ extending $T$ such that $\|T_Z\|\leq1+\frac{1}{\dim Z}$. Let $\mathcal U$ be an ultrafilter on the set of finite dimensional subspaces of $F$ containing $Y$ and refining the Fréchet filter and define $\psi\colon F\to E^{**}$ by $\psi(y)=\displaystyle w^*-\lim_{\mathcal U}T_Z(y)$, $y\in F$. The Banach-Alaoglu theorem ensures that $\psi$ is well-defined. If $P\colon E^{**}\to F$ is a 1-projection, the operator $\tilde T\colon F\to F$ given by $\tilde T=P\circ\psi$ satisfies the requirements.\qedhere
\end{enumerate}
\end{proof}

\begin{remark}\label{L_p is FIE}
 Since $L_p(0,1)$ is \textbf{AUH} (and therefore \textbf{aFIE}) for $p\not\in 2\mathbb N+4$ \cite{lusky}, Proposition \ref{afie-ness y complementos} implies that
 $L_p(0,1)$ is \textbf{FIE} for $p\not\in 2\mathbb N+4$.
 More generally 
 Proposition \ref{afie-ness y complementos}(2) implies that all separable $L_p(\mu)$ are \textbf{FIE} when $p\not\in2\mathbb N+4$.
\end{remark}

Other examples of \textbf{FIE} spaces are the so called \textit{1-uniformly finitely extensible} spaces. If $\lambda\geq1$, a Banach space $E$ is called $\lambda$-uniformly finitely extensible ($\lambda$-\textbf{UFO}) if for all finite dimensional subspace $X$ of $E$, each operator
 $\tau\colon X\to E$ can be extended to an operator $T\colon E\to E$ with $\|T\|\leq\lambda\|\tau\|$. $\lambda$-\textbf{UFO} were introduced by Y. Moreno and A. Plichko in \cite{moreno} and 
  systematically studied in \cite{castillo} and \cite{castillo-moreno-ferenczi}. It is worth mentioning that $\lambda$-\textbf{UFO} spaces satisfy the following dichotomy: every $\lambda$-\textbf{UFO} space is either an $\mathcal L_\infty$-space or a weak type 2 near-Hilbert space with the Maurey projection property \cite[Theorem 5.1]{castillo-moreno-ferenczi}.

\

The following diagram displays the
basic implications between the multidimensional properties considered so far:

\[
\begin{array}
[c]{cccccccc}
  \mbox{Ultrahomogeneous}   \Rightarrow & \mbox{Approximately Ultrahomogeneous} & \Rightarrow  \mbox{Almost Ultrahomogeneous} \\
\Downarrow  & & \Downarrow\\
\mbox{\textbf{FIE}} & \Rightarrow &   \mbox{\textbf{aFIE}}\\
\end{array}
\]

\

While $\textbf{FIE}$ and $\textbf{aFIE}$ are equivalent for reflexive spaces, we do not know whether there are $\textbf{aFIE}$-spaces which are not $\textbf{FIE}$. We also do not know whether the Approximate and  the Almost Ultrahomogeneities are equivalent properties.
All other implications in this diagram are strict.

\section{Almost Fra\"iss\'e Banach spaces}

The property introduced in this section is inspired by the recent notion of Fra\"iss\'eness studied in \cite{ferenczi et all}. We start by recalling the Fra\"iss\'e and the weak Fra\"iss\'e properties for Banach spaces.

\begin{definition}\cite[Definition 2.2]{ferenczi et all}\label{definition Fraisse}
Let $E$ be Banach space. 
\begin{enumerate}
\item $E$ is \textit{weak Fra\"iss\'e} if for every $\varepsilon>0$ and every 
$X\in\mathrm{Age}(E)$
there is $\delta>0$ such that if 
 $\phi,\psi\in\mathrm{Emb}_\delta(X,E)$, then there exists $T\in\mathrm{Isom}(E)$ with $\|T\circ\phi-\psi\|<\varepsilon$.

    \item $E$ is \textit{Fra\"iss\'e} if for every $\varepsilon>0$ and every dimension $k\in\mathbb N$
there is $\delta>0$ such that if $X\in\mathrm{Age}_k(E)$ and 
 $\phi,\psi\in\mathrm{Emb}_\delta(X,E)$, then there exists $T\in\mathrm{Isom}(E)$ with $\|T\circ\phi-\psi\|<\varepsilon$.
\end{enumerate}
\end{definition}

In \cite{ferenczi et all} it was proved that the Gurarij space $\mathbb G$ and $L_p(0,1)$ for $p\not\in2\mathbb N+4$ are Fra\"iss\'e Banach spaces. While the Fra\"iss\'e property obviously implies the weak Fra\"iss\'e property, it is not known whether the two properties coincide.
Recall that the Banach-Mazur distance on $\mathrm{Age}_k(E)$ is defined by
\begin{gather*}
    d_{\mathrm{BM}}(X,Y)=\inf\{\log(\|T\|\|T^{-1}\|)\,\colon\,\mbox{$T\colon X\to Y$ is an isomorphism}\}.
\end{gather*}
The authors of \cite{ferenczi et all} actually prove that a Banach space is Fra\"iss\'e if and only if it is weak Fra\"iss\'e and $(\mathrm{Age}_k(E),d_{\mathrm{BM}})$ is a compact metric space for each $k\in\mathbb N$ \cite[Theorem 2.12]{ferenczi et all}.
Our guideline is now to investigate whether a similar result holds for natural variations of these properties discussed in the introduction and we shall now define.

It will be important to recall that $(\overline{\mathrm{Age}_k(E)}^{\mathrm{BM}},d_{\mathrm{BM}})$ is a compact metric space for each $k\in\mathbb N$.
While the Fra\"iss\'e and weak Fra\"iss\'e properties involved only elements of $\mathrm{Age}(E)$, an interesting fact is that we shall actually consider properties where elements of  $\overline{\mathrm{Age}(E)}^{{\rm BM}}$ can be relevant as well.

\begin{proposition}\label{equivalent definitions of Fraisse}
Let $E$ be a Banach space. The following statements are equivalent:
 \begin{enumerate}
  \item whenever $X \in \overline{\mathrm{Age}(E)}^{\rm{BM}}$ and $\varepsilon>0$, there exists $\delta>0$ such that if $\phi_1,\phi_2\in{\rm Emb}_\delta(X,E)$, there exists $T \in {\rm Isom}_{\varepsilon}(E)$ such that
  $\phi_2=T \circ \phi_1$.
  
\item whenever $k \in\mathbb N$ and  $\varepsilon>0$, there exists $\delta>0$, such that if $X \in \overline{\mathrm{Age}_k(E)}^{\rm{BM}}$ and $\phi_1,\phi_2\in{\rm Emb}_\delta(X,E)$, there exists $T \in {\rm Isom}_{\varepsilon}(E)$ such that
  $\phi_2=T \circ \phi_1$.
\item whenever $k \in\mathbb N$ and  $\varepsilon>0$, there exists $\delta>0$, such that if $X \in \mathrm{Age}_k(E)$ and $\phi_1,\phi_2\in{\rm Emb}_\delta(X,E)$, there exists $T \in {\rm Isom}_{\varepsilon}(E)$ such that
  $\phi_2=T \circ \phi_1$.

\item whenever $k \in\mathbb N$ and $\varepsilon>0$, there exists $\delta>0$, such that for any 
$X \in \mathrm{Age}_k(E)$ and any $\phi \in\mathrm{Emb}_\delta(X,E)$, there exists $T \in {\rm Isom}_\varepsilon(E)$ such that
$T|_{X}=\phi$.
 \end{enumerate}
\end{proposition}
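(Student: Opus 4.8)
\textbf{Proof strategy for Proposition \ref{equivalent definitions of Fraisse}.}

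The plan is to prove the cycle of implications $(1)\Rightarrow(2)\Rightarrow(3)\Rightarrow(4)\Rightarrow(1)$, with the bulk of the work concentrated in the two ``uniformization'' steps $(1)\Rightarrow(2)$ and $(3)\Rightarrow(1)$, since the remaining implications are essentially formal.

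First, $(2)\Rightarrow(3)$ is immediate because $\mathrm{Age}_k(E)\subseteq\overline{\mathrm{Age}_k(E)}^{\mathrm{BM}}$, so the $\delta$ witnessing (2) for a given $k,\varepsilon$ works verbatim for (3). For $(3)\Rightarrow(4)$ I would take $\phi\in\mathrm{Emb}_\delta(X,E)$ and apply (3) to the pair $\phi_1=\mathrm{id}_X$ (viewed as an element of $\mathrm{Emb}(X,E)\subseteq\mathrm{Emb}_\delta(X,E)$, using $\delta\geq0$) and $\phi_2=\phi$; the resulting $T\in\mathrm{Isom}_\varepsilon(E)$ satisfies $\phi=T\circ\mathrm{id}_X=T|_X$. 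For $(4)\Rightarrow(1)$, given $X\in\overline{\mathrm{Age}(E)}^{\mathrm{BM}}$, say $\dim X=k$, and $\varepsilon>0$, I would first halve $\varepsilon$ and pick a Banach--Mazur-close genuine subspace; more precisely I would choose $\delta$ from (4) for the parameters $k$ and a suitably small $\varepsilon'$, then given $\phi_1,\phi_2\in\mathrm{Emb}_{\delta'}(X,E)$ (for an even smaller $\delta'$) note that each $\phi_i$ is a $\delta'$-isometry onto its image $X_i\in\mathrm{Age}_k(E)$, so $\phi_2\circ\phi_1^{-1}\colon X_1\to E$ is, after a tiny renormalization, close to an element $\psi$ of $\mathrm{Emb}_\delta(X_1,E)$; applying (4) to $X_1$ and $\psi$ yields $T\in\mathrm{Isom}_{\varepsilon'}(E)$ with $T|_{X_1}=\psi$, and then $T\circ\phi_1$ and $\phi_2$ agree up to an error controlled by $\delta'$, which by absorbing constants into $\mathrm{Isom}_\varepsilon(E)$ (composing with a small isometry perturbation, as in Remark \ref{AUH implies UH}) gives $\phi_2=T'\circ\phi_1$ on the nose for some $T'\in\mathrm{Isom}_\varepsilon(E)$. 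One subtlety here is that we need $\phi_2=T'\circ\phi_1$ exactly, not approximately; this is arranged by the standard trick of correcting $T'$ on the finite-dimensional space $\phi_1(X)$ using a bounded projection, exactly as in the proof of Remark \ref{AUH implies UH}, at the cost of slightly enlarging the isometry constant.

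The main obstacle, and the heart of the proposition, is $(1)\Rightarrow(2)$: passing from a $\delta$ that depends on the individual space $X\in\overline{\mathrm{Age}_k(E)}^{\mathrm{BM}}$ to a $\delta$ that is uniform over all $k$-dimensional $X$. Here I would exploit compactness of $(\overline{\mathrm{Age}_k(E)}^{\mathrm{BM}},d_{\mathrm{BM}})$, recalled just before the statement. The key lemma to establish is a continuity/stability statement: if $X,Y\in\overline{\mathrm{Age}_k(E)}^{\mathrm{BM}}$ are Banach--Mazur close, then the ``Fra\"iss\'e constant'' $\delta$ transfers approximately from $X$ to $Y$ — precisely, an $\eta$-isometry $u\colon Y\to X$ with $\eta$ small composes with $\delta$-isometries on $X$ to give $\delta''$-isometries, and composes back, so a $\delta$ good for $X$ yields a slightly worse $\delta$ good for $Y$, with the loss tending to $0$ as $d_{\mathrm{BM}}(X,Y)\to0$. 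Then, for fixed $k$ and $\varepsilon$, for each $X$ pick $\delta_X>0$ from (1) for the parameter $\varepsilon/2$ (say); the stability lemma shows that $\delta_X$ (shrunk by a factor depending on $d_{\mathrm{BM}}$) still works, with target error $\varepsilon$, for all $Y$ in a $d_{\mathrm{BM}}$-ball around $X$; by compactness finitely many such balls cover $\overline{\mathrm{Age}_k(E)}^{\mathrm{BM}}$, and I take $\delta$ to be the minimum of the (shrunk) $\delta_X$ over this finite cover.

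I expect the bookkeeping of $\varepsilon$'s and $\delta$'s in the stability lemma to be the only genuinely delicate part: one must be careful that composing a $\delta$-isometry with small perturbations keeps the composite inside $\mathrm{Emb}_{\delta'}$ for a controlled $\delta'$, and that surjective $\varepsilon$-isometries compose and invert with the expected (multiplicative) loss — these are the elementary facts $\mathrm{Emb}_\delta\circ\mathrm{Emb}_{\delta'}\subseteq\mathrm{Emb}_{(1+\delta)(1+\delta')-1}$ and $\mathrm{Isom}_\varepsilon\circ\mathrm{Isom}_{\varepsilon'}\subseteq\mathrm{Isom}_{(1+\varepsilon)(1+\varepsilon')-1}$, which I would state once and use freely. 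None of the individual estimates is hard; the care is in arranging that every error introduced can be absorbed into the final $\varepsilon$.
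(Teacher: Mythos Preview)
Your proposal is correct and rests on the same two key ideas as the paper: compactness of $\overline{\mathrm{Age}_k(E)}^{\mathrm{BM}}$ for the uniformization $(1)\Rightarrow(2)$, and passing to the image $X_1=\phi_1(X)\in\mathrm{Age}_k(E)$ to convert the one-embedding statement back into the two-embedding one.

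Your $(4)\Rightarrow(1)$ step is over-complicated, however. Simply take $\psi=\phi_2\circ\phi_1^{-1}$ itself: it already lies in $\mathrm{Emb}_{(1+\delta')^2-1}(X_1,E)\subseteq\mathrm{Emb}_\delta(X_1,E)$ with no ``renormalization'' needed, and then (4) gives $T\in\mathrm{Isom}_\varepsilon(E)$ with $T|_{X_1}=\psi$, whence $T\circ\phi_1=\phi_2$ \emph{exactly}. The approximation and the projection-correction trick you describe are unnecessary. This is precisely the paper's clean argument for $(4)\Rightarrow(3)$; and since your $\delta'$ depends only on $k$ and $\varepsilon$ (not on the particular $X$), the same computation actually yields $(4)\Rightarrow(2)$ directly, short-circuiting the cycle.

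For $(1)\Rightarrow(2)$ the paper argues by sequential compactness via contradiction rather than by your finite-cover route; the two are interchangeable, but the contradiction approach spares you from formulating and proving the stability lemma as a separate statement.
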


\begin{proof}
The implications (2) $\Rightarrow$ (1), (2) $\Rightarrow$ (3) and (3) $\Rightarrow$ (4) are immediate. We prove (1) $\Rightarrow$ (2) , (4) $\Rightarrow$ (3), and (3) $\Rightarrow$ (2). 

(1) $\Rightarrow$ (2): Suppose that (2) does not hold. So there is $k_0\in\mathbb N$ and $\varepsilon_0>0$ such that for each $n\in\mathbb N$ there exist $X_n\in\overline{\mathrm{Age}_{k_0}(E)}^{{\rm BM}}$ and $\phi_n^1,\phi_n^2\in{\rm{Emb}}_{1/n}(X_n,E)$ satisfying $\phi_n^2\neq T\circ \phi_n^1$ for any $T\in{\rm Isom}_{\varepsilon_0}(E)$. By compactness of $\overline{\mathrm{Age}_{k_0}(E)}^{{\rm BM}}$ we may assume that $X_n\stackrel{{\rm BM}}{\to}X$ for some $X\in\overline{\mathrm{Age}_{k_0}(E)}^{{\rm BM}}$. Now, let $\delta>0$ be the corresponding number satisfying (1) for $X$ and $\varepsilon_1=\varepsilon_0/2$. Choose $0<\xi<\delta$ and let $n_0\in\mathbb N$ be such that $\frac{1}{n_0}<\frac{\delta-\xi}{1+\xi}$ and $d_{\mathrm{BM}}(X_{n_0},X)<\xi$. If $l\colon X\to X_{n_0}$ is an isomorphism with $\|l\|=1$ and $\|l^{-1}\|\leq1+\xi$, then $\phi_{n_0}^1\circ l,\phi_{n_0}^2\circ l\in{\rm{Emb}}_{\delta}(X,E)$. By (1), there exists $T\in\mathrm{Isom}_{\varepsilon_1}(E)$ such that $\phi_{n_0}^2\circ l=T\circ \phi_{n_0}^1\circ l$. Hence $\phi_{n_0}^2=T\circ \phi_{n_0}^1$ for some $T\in{\rm Isom}_{\varepsilon_0}(E)$ which is impossible.

 (4) $\Rightarrow$ (3): Suppose that (4) is valid and let $\varepsilon>0$ and $k\in\mathbb N$ be given. Take $\delta>0$ corresponding to (4) and fix $0<\delta'<\delta$ such that $(1+\delta')^2\leq1+\delta$. Now let $\phi,\psi\in\mathrm{Emb}_{\delta'}(X,E)$ be given. Write $Y=\phi(X)\in\mathrm{Age}_k(E)$ and define $\eta\colon Y\to E$ by $\eta y=\psi(\phi^{-1}y)$, $y\in Y$. Thus, $\eta\in\mathrm{Emb}_{\delta}(Y,E)$ and by (4), there is $T\in\mathrm{Isom}_{\varepsilon}(E)$ such that 
$T|_{Y}=\eta$. The equation $T|_{Y}=\eta$ means that $T\circ\phi=\psi$. 

(3) $\Rightarrow$ (2): Indeed, let $\varepsilon>0$ and $k\in\mathbb N$ be given, fix $\delta>0$ as in (3) and take $0<\xi<\frac{\delta}{2+\delta}$. If $X\in\overline{\mathrm{Age}_k(E)}^{\mathrm{BM}}$, there exists an $\xi$-isometry $A_\xi\colon X\to X_\xi$, where $X_\xi\in\mathrm{Age}_k(E)$. Now, for $\phi,\psi\in\mathrm{Emb}_{\delta/2}(X,E)$,
we have $\phi\circ A_\xi^{-1},\psi\circ A_\xi^{-1}\in\mathrm{Emb}_\delta(X_\xi,E)$. By (3), we have $\psi\circ A_\xi^{-1}=T\circ\phi\circ A_\xi^{-1}$ for some $T\in\mathrm{Isom}_{\varepsilon}(E)$. Thus, $\psi=T\circ\phi$.
\end{proof}

\begin{proposition}\label{equivalent definitions of weak Fraisse}
Let $E$ be a Banach space. The following statements are equivalent:
 \begin{itemize}
  \item[(5)]
  whenever $X \in \mathrm{Age}(E)$ and $\varepsilon>0$, there exists $\delta>0$ such that if $\phi_1,\phi_2\in{\rm Emb}_\delta(X,E)$, there exists $T \in {\rm Isom}_{\varepsilon}(E)$ such that
  $\phi_2=T \circ \phi_1$.
  \item[(6)]
  whenever $X \in \mathrm{Age}(E)$ and $\varepsilon>0$, there exists $\delta>0$ such that if $\phi\in{\rm Emb}_\delta(X,E)$, there exists $T \in {\rm Isom}_{\varepsilon}(E)$ such that
  $T|_{X}=\phi$.
  \end{itemize}
\end{proposition}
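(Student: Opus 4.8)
The plan is to prove the equivalence of (5) and (6) by the same kind of reduction used in the proof of Proposition~\ref{equivalent definitions of Fraisse}, only without the dimension-uniformity, since now the choice of $\delta$ is allowed to depend on the fixed $X \in \mathrm{Age}(E)$. The implication (5)~$\Rightarrow$~(6) is immediate: given $X$ and $\varepsilon$, apply (5) to obtain $\delta$, and for $\phi \in {\rm Emb}_\delta(X,E)$ take $\phi_1 = \mathrm{id}_X$ (which lies in $\mathrm{Emb}_0(X,E)\subseteq\mathrm{Emb}_\delta(X,E)$) and $\phi_2=\phi$; the resulting $T \in {\rm Isom}_\varepsilon(E)$ with $\phi_2 = T\circ\phi_1$ satisfies $T|_X = \phi$.

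For (6)~$\Rightarrow$~(5), I would mimic the (4)~$\Rightarrow$~(3) argument. Fix $X \in \mathrm{Age}(E)$ and $\varepsilon>0$. Apply (6) to $X$ and $\varepsilon$ to obtain $\delta>0$, and then choose $0<\delta'<\delta$ with $(1+\delta')^2 \leq 1+\delta$. Given $\phi_1,\phi_2 \in \mathrm{Emb}_{\delta'}(X,E)$, set $Y = \phi_1(X) \in \mathrm{Age}(E)$ and define $\eta\colon Y \to E$ by $\eta = \phi_2 \circ \phi_1^{-1}$; since $\phi_1^{-1}$ is a $\delta'$-isometry from $Y$ onto $X$ and $\phi_2$ is a $\delta'$-isometry, the composition $\eta$ is a $\delta$-isometry, i.e.\ $\eta \in \mathrm{Emb}_\delta(Y,E)$. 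The only subtlety is that (6) as applied above produces $\delta$ tailored to $X$, not to $Y$, so strictly speaking I should apply (6) to the space $Y$ as well. This is where one must be slightly careful: to make the reduction work uniformly I would instead fix, for the given $X$, the number $\delta_X$ from (6) applied to $X$, and observe that every $Y \in \mathrm{Age}(E)$ that arises as $\phi_1(X)$ for some near-isometry $\phi_1$ is, by construction, within controlled Banach--Mazur distance of $X$; then a short argument (composing with an isomorphism $X \to Y$ of small distortion, exactly as in the proof of (3)~$\Rightarrow$~(2)) reduces an embedding of $Y$ to an embedding of $X$, allowing the use of $\delta_X$.

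Concretely, the cleanest route is: given $X$ and $\varepsilon$, let $\delta_0>0$ be obtained from (6) for $X$ and $\varepsilon$; choose $\delta>0$ small enough that any $\phi_1,\phi_2 \in \mathrm{Emb}_\delta(X,E)$ yield, via $\eta = \phi_2\circ\phi_1^{-1}$ on $Y=\phi_1(X)$, and via an isomorphism $l\colon X \to Y$ with $\|l\|=1$, $\|l^{-1}\|$ close to $1$ (such $l$ exists since $Y$ is the image of $X$ under a $\delta$-isometry), a map $\eta \circ l \in \mathrm{Emb}_{\delta_0}(X,E)$; apply (6) to get $T \in {\rm Isom}_\varepsilon(E)$ with $T|_X = \eta\circ l$, which unwinds to $T\circ\phi_1 = \phi_2$. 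Tracking the elementary inequalities relating the various distortion parameters (choosing $\delta$ as an explicit function of $\delta_0$) is the only real work, and it is routine.

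The main obstacle is therefore not conceptual but bookkeeping: ensuring that the $\delta$ produced in (5) genuinely depends only on $X$ and $\varepsilon$, and in particular that the auxiliary space $Y = \phi_1(X)$ (whose own "$\delta$ from (6)" we cannot invoke, since $Y$ varies) is handled through $X$ alone. This is exactly the device already used twice in the proof of Proposition~\ref{equivalent definitions of Fraisse} — replacing an embedding of a nearby space by an embedding of the reference space after precomposing with a low-distortion isomorphism — so the proof is a direct adaptation, and I would phrase it to parallel the (4)~$\Rightarrow$~(3) and (3)~$\Rightarrow$~(2) steps above, omitting the dimension quantifier throughout.
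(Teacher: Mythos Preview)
Your (5)~$\Rightarrow$~(6) direction is fine (and in fact the paper omits it). But in (6)~$\Rightarrow$~(5) there is a genuine gap in the final step. You obtain $T\in\mathrm{Isom}_\varepsilon(E)$ with $T|_X=\eta\circ l$ and then assert that this ``unwinds to $T\circ\phi_1=\phi_2$''. It does not: the identity $T\circ\phi_1=\phi_2$ requires you to know $T$ on $Y=\phi_1(X)$ (namely $T|_Y=\eta$), whereas what you have is control of $T$ on $X$. Knowing $T(x)=\phi_2(\phi_1^{-1}(l(x)))$ for $x\in X$ tells you nothing about $T(\phi_1(x))$ for $x\in X$, since $\phi_1(x)$ need not lie in $X$. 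The precomposition-by-$l$ trick from (3)~$\Rightarrow$~(2) of Proposition~\ref{equivalent definitions of Fraisse} does not help here, because in that argument the conclusion was already phrased in terms of the reference space, whereas you still need to land on $Y$.

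The paper's argument avoids this issue entirely and is much shorter: given $X$ and $\varepsilon$, choose $\xi>0$ with $(1+\xi)^2\leq 1+\varepsilon$, and let $\delta$ be the value given by (6) for $X$ and $\xi$. For $\phi_1,\phi_2\in\mathrm{Emb}_\delta(X,E)$, apply (6) to \emph{each} of them to obtain $T_1,T_2\in\mathrm{Isom}_\xi(E)$ with $T_i|_X=\phi_i$. Then $T:=T_2\circ T_1^{-1}\in\mathrm{Isom}_\varepsilon(E)$, and for $x\in X$ one has $T(\phi_1(x))=T_2(T_1^{-1}(T_1 x))=T_2(x)=\phi_2(x)$. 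The point is that the global extension $T_1$ (rather than a mere isomorphism $l\colon X\to Y$) is what lets you pass back from $Y$ to $X$.
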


\begin{proof}
Suppose that (6) is valid and let $X \in \mathrm{Age}(E)$ and $\varepsilon>0$ be given. If $\xi>0$ satisfies $(1+\xi)^2\leq1+\varepsilon$, take $\delta>0$ corresponding to $\xi$ in (6). Let $\phi_1,\phi_2\in\mathrm{Emb}_{\delta}(X,E)$ be given. From (6) there are
$T_1,T_2\in\mathrm{Isom}_{\xi}(E)$ such that $T_1|_X=\phi_1$ and $T_2|_X=\phi_2$. So, $T=T_2\circ T_1^{-1}\in\mathrm{Isom}_\varepsilon(E)$ and 
$T\circ\phi_1=\phi_2$.
\end{proof}

Propositions \ref{equivalent definitions of Fraisse} and \ref{equivalent definitions of weak Fraisse} together with Definition \ref{definition Fraisse} motivate the next definitions.

\begin{definition}
Let $E$ be a Banach space. 
\begin{enumerate}
    \item $E$ is \textit{almost Fra\"iss\'e} (\textbf{AF}) if it satisfies one of conditions (and hence all of them) in Proposition \ref{equivalent definitions of Fraisse}.

    \item  $E$ is \textit{weak almost Fra\"iss\'e} (weak \textbf{AF}) if it satisfies one of conditions (and hence all of them) in Proposition \ref{equivalent definitions of weak Fraisse}.
\end{enumerate}
\end{definition}

It is obvious, but worth stating, that the \textbf{AF}-property implies the weak \textbf{AF}-property. Items (2)-(3)-(4) in Proposition \ref{equivalent definitions of Fraisse} indicates that some uniformity with respect to the dimension of $E$ follows from the almost Fra\"iss\'e property, while this does not formally follow from the weak almost Fra\"iss\'e property. However,
as a consequence of (1) in Proposition \ref{equivalent definitions of Fraisse} 
and  (5) in Proposition \ref{equivalent definitions of weak Fraisse}, we obtain:

\begin{corollary}\label{AFandwAF}
If $E$ is a weak \textbf{AF} Banach space and $\mathrm{Age}_k(E)$ is compact for all $k$, then
$E$ is \textbf{AF}.
\end{corollary}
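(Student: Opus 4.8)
The plan is to deduce the \textbf{AF}-property (in the form of condition (1) of Proposition \ref{equivalent definitions of Fraisse}) from the weak \textbf{AF}-property (in the form of condition (5) of Proposition \ref{equivalent definitions of weak Fraisse}) using the compactness of each $\mathrm{Age}_k(E)$. The natural route is by contradiction: assuming condition (1) fails, I would fix a witnessing $X_0\in\overline{\mathrm{Age}(E)}^{\mathrm{BM}}$, say of dimension $k$, and $\varepsilon_0>0$, so that for every $n$ there are $\phi_n^1,\phi_n^2\in\mathrm{Emb}_{1/n}(X_0,E)$ admitting no $T\in\mathrm{Isom}_{\varepsilon_0}(E)$ with $\phi_n^2=T\circ\phi_n^1$.

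First I would reduce to the case $X_0\in\mathrm{Age}_k(E)$: since $\mathrm{Age}_k(E)$ is compact, it is in particular closed in $(\overline{\mathrm{Age}_k(E)}^{\mathrm{BM}},d_{\mathrm{BM}})$, hence $\overline{\mathrm{Age}_k(E)}^{\mathrm{BM}}=\mathrm{Age}_k(E)$, so every $X_0$ relevant to condition (1) already lies in $\mathrm{Age}(E)$. (Alternatively, one can run the small-perturbation argument from the proof of (3)$\Rightarrow$(2) in Proposition \ref{equivalent definitions of Fraisse} to replace $X_0$ by a nearby genuine finite-dimensional subspace and absorb the error into $\delta$ and $\varepsilon$.) Once $X_0\in\mathrm{Age}(E)$, condition (5) directly supplies, for the chosen $\varepsilon_0$, a $\delta>0$ such that any two $\delta$-embeddings of $X_0$ into $E$ are related by an element of $\mathrm{Isom}_{\varepsilon_0}(E)$; taking $n>1/\delta$ then gives $\phi_n^2=T\circ\phi_n^1$ for some such $T$, contradicting the choice of the $\phi_n^i$. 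This yields condition (1), and hence \textbf{AF}, by Proposition \ref{equivalent definitions of Fraisse}.

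The only genuinely delicate point is the passage from a point of $\overline{\mathrm{Age}(E)}^{\mathrm{BM}}$ to a point of $\mathrm{Age}(E)$: one must be careful that compactness of each $\mathrm{Age}_k(E)$ really forces $\mathrm{Age}_k(E)=\overline{\mathrm{Age}_k(E)}^{\mathrm{BM}}$ (which it does, a compact subset of a metric space being closed), so that no completion issue is hidden in condition (1). Everything else is the same bookkeeping with $(1+\xi)$-factors already carried out in Propositions \ref{equivalent definitions of Fraisse} and \ref{equivalent definitions of weak Fraisse}, and in fact, given that reduction, the statement follows formally: weak \textbf{AF} is condition (5), and under the compactness hypothesis condition (5) is literally condition (1) of Proposition \ref{equivalent definitions of Fraisse}, which is \textbf{AF}.
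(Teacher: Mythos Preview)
Your proposal is correct and follows exactly the paper's reasoning: since compactness of each $\mathrm{Age}_k(E)$ forces $\overline{\mathrm{Age}_k(E)}^{\mathrm{BM}}=\mathrm{Age}_k(E)$, condition (5) of Proposition~\ref{equivalent definitions of weak Fraisse} becomes literally condition (1) of Proposition~\ref{equivalent definitions of Fraisse}, which is the \textbf{AF}-property. The contradiction setup you begin with is unnecessary (as you yourself note at the end), but the argument is the same one the paper indicates.
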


It seems to be open whether the \textbf{AF}-property implies that 
$\mathrm{Age}_k(E)$ is compact for all $k$, or whether the weak \textbf{AF} property implies the \textbf{AF} property in general.
The next result justifies the terminology used here.

\begin{proposition}\label{Fraisse implica AF}
Any  Fra\"iss\'e (resp. weak Fra\"iss\'e) Banach space is \textbf{AF} (resp. weak \textbf{AF}).
\end{proposition}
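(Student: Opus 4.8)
The plan is to show that Fra\"iss\'eness implies \textbf{AF} by starting from Fra\"iss\'eness in its original formulation (Definition \ref{definition Fraisse}(2)) and verifying condition (4) of Proposition \ref{equivalent definitions of Fraisse}, since that condition is the one that concerns extending a single $\delta$-isometry $\phi$ to an $\varepsilon$-isometry $T$ of the whole space. The difference between the two notions is precisely the difference between \emph{approximate} transitivity ``$\|T\circ\phi_1-\phi_2\|<\varepsilon$ with $T$ a genuine isometry'' and \emph{exact} transitivity ``$\phi_2=T\circ\phi_1$ with $T$ an $\varepsilon$-isometry'', which is exactly the kind of trade-off handled in Remark \ref{AUH implies UH}. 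So the strategy is to mimic that argument, using a bounded projection onto the finite-dimensional subspace to ``absorb'' the small discrepancy between $T\circ\phi_1$ and $\phi_2$ into a small perturbation of $T$.

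Concretely, fix $k\in\mathbb N$ and $\varepsilon>0$. First I would choose an auxiliary $\varepsilon'>0$ small enough (depending on $\varepsilon$ and $k$) and apply the Fra\"iss\'e property to get $\delta_0>0$ with: for every $X\in\mathrm{Age}_k(E)$ and $\phi_1,\phi_2\in\mathrm{Emb}_{\delta_0}(X,E)$ there is $U\in\mathrm{Isom}(E)$ with $\|U\circ\phi_1-\phi_2\|<\varepsilon'$. Now, given such $X$ and a single $\phi\in\mathrm{Emb}_\delta(X,E)$ (with $\delta\le\delta_0$ chosen at the end), take $\phi_1$ to be the inclusion $X\hookrightarrow E$ and $\phi_2=\phi$; both are $\delta_0$-isometries, so we get $U\in\mathrm{Isom}(E)$ with $\|U|_X-\phi\|<\varepsilon'$. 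Then, exactly as in Remark \ref{AUH implies UH}, pick a projection $P_X\colon E\to X$ with $\|P_X\|\le\dim X=k$ and set $T=U-(U-\phi)\circ P_X$. One checks $T|_X=\phi$ directly, and $\|T-U\|\le\|U-\phi\|\cdot\|P_X\|<k\varepsilon'$, so that $T$ differs from the isometry $U$ by an operator of norm $<k\varepsilon'$; hence for $\varepsilon'$ small enough ($k\varepsilon'$ comfortably less than, say, $\varepsilon/2$) we get $T\in\mathrm{Isom}_\varepsilon(E)$. This verifies condition (4) with the choice $\delta:=\delta_0$, hence $E$ is \textbf{AF}.

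For the weak Fra\"iss\'e $\Rightarrow$ weak \textbf{AF} statement the argument is the same but without the uniformity in the dimension: given $X\in\mathrm{Age}(E)$ and $\varepsilon>0$, apply weak Fra\"iss\'eness with the auxiliary $\varepsilon'$ to that particular $X$ to obtain $\delta$, then run the identical projection argument with $\phi_1$ the inclusion and $\phi_2=\phi$, using a projection onto $X$ of norm at most $\dim X$. This produces $T\in\mathrm{Isom}_\varepsilon(E)$ with $T|_X=\phi$, which is condition (6) of Proposition \ref{equivalent definitions of weak Fraisse}, so $E$ is weak \textbf{AF}.

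The only mildly delicate point — and the one I expect to be the ``main obstacle'', though it is really just bookkeeping — is keeping the chain of constants straight: the Fra\"iss\'e $\delta$ is produced in response to $\varepsilon'$, and $\varepsilon'$ must be chosen small relative to $\varepsilon$ \emph{and} to $k$ (because of the factor $\|P_X\|\le k$), and one should also double-check that perturbing a surjective isometry by an operator of small norm yields a surjective $\varepsilon$-isometry (invertibility via Neumann series, with the $\varepsilon$-isometry bounds following from $\|T x\|=\|Ux+(T-U)x\|$ and the triangle inequality on both sides). None of this is subtle, but it should be written out carefully so that the implication ``Fra\"iss\'e $\Rightarrow$ condition (4)'' is airtight.
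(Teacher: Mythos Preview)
Your proposal is correct and follows essentially the same route as the paper's proof: apply the Fra\"iss\'e property with a small auxiliary parameter to obtain a surjective isometry $U$ close to $\phi$ on $X$, then correct $U$ via a projection $P_X$ of norm at most $k$ to get $T=U-(U-\phi)\circ P_X$, which restricts to $\phi$ on $X$ and is an $\varepsilon$-isometry. The paper just makes the constants explicit (it uses $\xi/k$ where you write $\varepsilon'$, and records the bound $\max\{\|T\|,\|T^{-1}\|\}\le\frac{1+\xi}{1-\xi}$), but the argument is the same.
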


\begin{proof}
Let $\varepsilon>0$ and $k\in\mathbb N$ be given, and take $0<\xi<1$ such that 
$\frac{1+\xi}{1-\xi}<1+\varepsilon$. Fix $\delta>0$ corresponding
to $\xi/k$ in the definition of Fra\"iss\'e. Also let $X\in\mathrm{Age}_k(E)$ and $\phi\in\mathrm{Emb}_\delta(X,E)$.
By the Fra\"iss\'e-ness, there is $S\in\mathrm{Isom}(E)$ satisfying $\|S|_X-\phi\|<\frac{\xi}{k}$. If $\psi:=S|_X-\phi$ and $P\colon E\to X$ is a projection with $\|P\|\leq k$, then by setting $T=S-\psi\circ P\colon E\to E$ we have $T|_X=\phi$ and $\max\{\|T\|,\|T^{-1}\|\}\leq\frac{1+\xi}{1-\xi}<1+\varepsilon$, so $T\in\mathrm{Isom}_{\varepsilon}(E)$. The proof of the second statement is analogous. 
\end{proof}

It is worth noting the following property of \textbf{AF} spaces. Recall that two spaces are said to be almost isometric if they are $\varepsilon$-isometric for all $\varepsilon>0$.

\begin{fact}\label{AF is preserved by almost isometries}
    If $E$ is almost isometric to $F$ and $E$ is \textbf{AF} (weak \textbf{AF}, respectively), then $F$ is \textbf{AF} (weak \textbf{AF}, respectively).
\end{fact}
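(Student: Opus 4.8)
The plan is to unwind the definition of \textbf{AF} via condition (4) of Proposition \ref{equivalent definitions of Fraisse} (and the corresponding condition (6) of Proposition \ref{equivalent definitions of weak Fraisse} in the weak case), transport everything through the given almost isometry, and absorb the resulting distortions into the $\varepsilon$'s and $\delta$'s. So suppose $E$ is \textbf{AF} and $j\colon E\to F$ is a $\theta$-isometry onto $F$ for every $\theta>0$; we must produce, for each $k$ and each $\varepsilon>0$, a $\delta>0$ witnessing condition (4) for $F$.

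First I would fix $k\in\mathbb N$ and $\varepsilon>0$, choose an auxiliary $\xi>0$ small (to be specified) with, say, $(1+\xi)^3\le 1+\varepsilon$ or a similar bookkeeping inequality, and let $\delta_0>0$ be the number given by condition (4) for $E$ applied to the dimension $k$ and tolerance $\xi$. Now pick $\theta>0$ small enough (depending on $\delta_0,\xi,\varepsilon$) and let $j\colon E\to F$ be an onto $\theta$-isometry with inverse $j^{-1}\colon F\to E$, which is then also a $\theta$-isometry. Set $\delta>0$ to be suitably smaller than $\delta_0$. Given $X\in\mathrm{Age}_k(F)$ and $\phi\in\mathrm{Emb}_\delta(X,F)$, transport the data to $E$: let $X'=j^{-1}(X)\in\mathrm{Age}_k(E)$ and let $\phi'=j^{-1}\circ\phi\circ j|_{X'}\colon X'\to E$. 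A short computation shows $\phi'$ is a $\delta'$-isometry into $E$ for some $\delta'$ controlled by $\delta$ and $\theta$, hence $\phi'\in\mathrm{Emb}_{\delta_0}(X',E)$ once $\delta,\theta$ are small. By condition (4) for $E$ there is $S\in\mathrm{Isom}_\xi(E)$ with $S|_{X'}=\phi'$. Then define $T=j\circ S\circ j^{-1}\colon F\to F$; this is a surjective map, and composing three maps of distortion at most $\xi$ resp.\ $\theta$ shows $T\in\mathrm{Isom}_\varepsilon(F)$ provided the parameters were chosen appropriately. Finally, for $x\in X$ we have $j^{-1}(x)\in X'$, so $T(x)=j(S(j^{-1}x))=j(\phi'(j^{-1}x))=j(j^{-1}\phi(j(j^{-1}x)))=\phi(x)$, i.e.\ $T|_X=\phi$, as required. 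This establishes that $F$ is \textbf{AF}; the weak \textbf{AF} case is identical, working with a single fixed $X\in\mathrm{Age}(F)$ and using condition (6) in place of (4), since no uniformity in the dimension is involved.

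The only genuinely delicate point, and the one I would write out carefully, is the parameter chase: one must verify that conjugating an $\xi$-isometry of $E$ by a $\theta$-isometry yields an $\varepsilon$-isometry of $F$ with $\varepsilon\to 0$ as $\xi,\theta\to 0$, and likewise that $\phi'$ lands in $\mathrm{Emb}_{\delta_0}$. Both follow from the elementary submultiplicativity of the distortion constants: if $U$ is an $\alpha$-isometry and $V$ a $\beta$-isometry then $V\circ U$ is an $(\alpha+\beta+\alpha\beta)$-isometry (equivalently the constants $1+\delta$ multiply), so that $j\circ S\circ j^{-1}$ has distortion governed by $(1+\theta)^2(1+\xi)-1$, and surjectivity of $T$ is immediate since $j$ and $S$ are onto. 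No step here presents a real obstacle; the argument is purely a matter of organizing the $\varepsilon$-management, and Fact \ref{AF is preserved by almost isometries} could even be stated more quantitatively (with an explicit modulus) if one wished, though that is not needed in the sequel.
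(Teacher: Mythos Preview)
Your proof is correct and follows essentially the same conjugation idea as the paper: transport the data through an almost-isometry $j$, apply \textbf{AF} in $E$, and conjugate back. The only cosmetic difference is that you work with characterization (4) of Proposition~\ref{equivalent definitions of Fraisse} (extending a single $\phi$) whereas the paper uses characterization (3) (relating two embeddings $\phi_1,\phi_2$); since these are equivalent, the arguments are interchangeable, and your parameter bookkeeping is in fact written out more carefully than the paper's somewhat terse version.
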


\begin{proof}
    Let $\varepsilon>0$ and $k\in\mathbb N$ be given, and let $\delta>0$ be the corresponding number to the \textbf{AF} definition. Take $0<\xi<\delta/(2+\delta)$ and let $j\colon E\to F$ be an $\xi$-isometry.
    If $Y\in\mathrm{Age}_k(F)$ and $\phi_1,\phi_2\in\mathrm{Emb}_{\delta/2}(Y,F)$, then $\phi_1\circ j,\phi_2\circ j\in\mathrm{Emb}_\delta(j^{-1}(Y),E)$. Since $E$ is \textbf{AF}, there is $T\in\mathrm{Isom}_\varepsilon(E)$ such that $T\circ\phi_1\circ j=\phi_2\circ j$, that is, $T\circ\phi_1=\phi_2$. The statement for weak \textbf{AF} spaces is proved similarly.
\end{proof}

On the other hand, we do not know if the class of \textbf{FIE} or \textbf{aFIE} spaces is stable by almost isometries.

We end this part by displaying the relationships between the notions introduced throughout the paper.

\[
\begin{array}
[c]{cccccccc}
& &  \mbox{Fra\"iss\'e} &  \Rightarrow & \mbox{Almost Fra\"iss\'e} \\
& & \Downarrow  & &  \Downarrow \\
& & \mbox{Weak Fra\"iss\'e}  & \Rightarrow & \mbox{Weak Almost Fra\"iss\'e} \\
& & \Downarrow & & \Downarrow\\
 \mbox{Ultrahomogeneous} & \Rightarrow & \mbox{Approximately Ultrahomogeneous} & \Rightarrow &  \mbox{Almost Ultrahomogeneous } \\
\Downarrow & & & & \Downarrow\\
\mbox{\textbf{FIE}} & & \Rightarrow & & \mbox{\textbf{aFIE}} \\
\end{array}
\]

We know of no example which is almost ultrahomogeneous but not Fra\"iss\'e. So the six properties in the upper right corner could be equivalent; one important aspect of this question relates to whether ${\rm Age}(E)$ is closed with respect to the Banach-Mazur pseudodistance. Under this hypothesis, Fra\"iss\'e and its weak version are equivalent (Theorem 2.12 in \cite{ferenczi et all}), and almost Fra\"iss\'e and its weak version are equivalent (Corollary \ref{AFandwAF}).

\subsection{Some pseudodistances associated to \textbf{AF} Banach spaces}\label{subsections pseudodistances}

To continue, we recall the Gromov-Hausdorff function on $\mathrm{Age}_k(E)^2$ introduced in \cite{ferenczi et all} in order to study Fra\"iss\'e Banach spaces. If $X,Y\in\mathrm{Age}_k(E)$, the authors define 
\begin{gather}\label{g-h function}
    \gamma_E(X,Y)=\inf\{d_H(B_{X_0},B_{Y_0})\,\colon\,\mbox{$X_0\equiv X$ and $Y_0\equiv Y$}\},
\end{gather}
where $d_H$ is the  $\|\cdot\|_E$-Hausdorff metric and the symbol $\equiv$ means ``isometric to". Another function defined on $\mathrm{Age}_k(E)^2$ and considered in \cite{ferenczi et all} is
\begin{gather*}
    D_E(X,Y)=\inf\{d_H(B_{TX},B_Y)\,\colon\,T\in\mathrm{Isom}(E)\},\quad X,Y\in\mathrm{Age}_k(E).
\end{gather*}
It is easy to see that $D_E$ is always a pseudometric on $\mathrm{Age}_k(E)$, that $\gamma_E \leq D_E$, and that both are invariant under the action of $\mathrm{Isom}(E)$. Also, $D_E$ is a complete metric for every Banach space $E$ and $\gamma_E=D_E$ when $E$ is \textbf{AUH},  \cite[Proposition 2.14]{ferenczi et all} - and therefore
$\gamma_E$ is complete when $E$ is \textbf{AUH}, as is implicitly used in \cite{ferenczi et all}. The $D_E$-completeness is consequence of the following fact which is certainly well-known but we include a proof. 

\begin{lemma}\label{pseudometric complete induced by G}
    Let $(X,d)$ be a complete metric space. Suppose that $G$ is a group acting on $X$. If $d$ is $G$-invariant, then 
    \begin{align*}
       \rho\colon X\times X&\to\mathbb R\\
        (x,y)&\mapsto\rho(x,y)=\inf_{g\in G}d(gx,y)
    \end{align*}
    is a complete pseudometric.
\end{lemma}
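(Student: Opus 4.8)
The plan is to verify the three standard axioms of a pseudometric for $\rho$ (nonnegativity, symmetry, triangle inequality) and then establish completeness separately.

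First I would record that $\rho$ is well-defined and nonnegative since $d\geq 0$, and that $\rho(x,x)=0$ by taking $g=e$. Symmetry of $\rho$ follows from $G$-invariance of $d$: for any $g\in G$, $d(gx,y)=d(x,g^{-1}y)=d(g^{-1}y,x)$, so running over all $g\in G$ gives $\inf_{g}d(gx,y)=\inf_{h}d(hy,x)$, i.e. $\rho(x,y)=\rho(y,x)$. For the triangle inequality, given $x,y,z\in X$ and $\varepsilon>0$, pick $g,h\in G$ with $d(gx,y)<\rho(x,y)+\varepsilon$ and $d(hy,z)<\rho(y,z)+\varepsilon$; then by $G$-invariance $d(hgx,z)\leq d(hgx,hy)+d(hy,z)=d(gx,y)+d(hy,z)<\rho(x,y)+\rho(y,z)+2\varepsilon$, and since $hg\in G$ we get $\rho(x,z)\leq\rho(x,y)+\rho(y,z)+2\varepsilon$; letting $\varepsilon\to0$ finishes this part.

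For completeness, I would take a $\rho$-Cauchy sequence $(x_n)$ in $X$ and, by passing to a subsequence, assume $\rho(x_n,x_{n+1})<2^{-n}$ for all $n$. The key step is to build, inductively, elements $g_n\in G$ so that the points $y_n\coloneqq g_nx_n$ form a genuine $d$-Cauchy sequence: set $g_1=e$, and given $g_n$, choose $h_n\in G$ with $d(h_n x_n, x_{n+1})<2^{-n}$ (possible since $\rho(x_n,x_{n+1})<2^{-n}$), and put $g_{n+1}=g_n h_n^{-1}$. Then, using $G$-invariance of $d$, $d(y_n,y_{n+1})=d(g_nx_n,g_nh_n^{-1}x_{n+1})=d(h_nx_n,x_{n+1})<2^{-n}$, so $(y_n)$ is $d$-Cauchy, hence converges to some $y\in X$ by completeness of $(X,d)$. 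Finally $\rho(x_n,y)\leq d(g_nx_n,y)=d(y_n,y)\to 0$, so the original subsequence $\rho$-converges to $y$; since a Cauchy sequence with a convergent subsequence converges, $(x_n)$ itself $\rho$-converges to $y$. The main (mild) obstacle is the inductive choice of the $g_n$: one must be careful that the $\varepsilon$-near-optimal $h_n$ at stage $n$ is compatible with the earlier choices, which is exactly why the telescoping definition $g_{n+1}=g_n h_n^{-1}$ is used rather than choosing each $g_n$ independently.
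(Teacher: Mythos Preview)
Your proof is correct and follows essentially the same strategy as the paper's: both construct, from a $\rho$-Cauchy sequence, a sequence of group translates that is genuinely $d$-Cauchy via a telescoping inductive choice of group elements, then use completeness of $(X,d)$. The only cosmetic difference is that the paper writes the translated sequence as $z_n=g_n^{-1}x_n$ with $g_{n+1}=h_ng_n$, whereas you write $y_n=g_nx_n$ with $g_{n+1}=g_nh_n^{-1}$; these are dual parametrizations of the same idea.
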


\begin{proof}
It is not difficult to check that $\rho$ is a pseudometric. Now, we prove the completeness.
    Let $(x_n)$ be a sequence in $X$ such that $\rho(x_n,x_{n+1})<1/2^n$ for each $n\in\mathbb N$. We shall contruct sequences $(z_n)$ in $X$ and $(g_n)$ in $G$ such that $g_nz_n=x_n$ and $d(z_n,z_{n+1})<1/2^n$ for each $n\in\mathbb N$. Suppose that $z_1,\ldots,z_n\in X$ and $g_1,\ldots,g_n\in G$ have already been defined. Since $\rho(x_n,x_{n+1})<1/2^n$, there exists $h_n\in G$ satisfying $d(h_nx_n,x_{n+1})<1/2^n$. By the $G$-invariance of $d$ we have 
 \begin{align*}
d(h_nx_n,x_{n+1})=d(x_n,h_n^{-1}x_{n+1})&=d(g_nz_n,h_n^{-1}x_{n+1})\\
     &=d(z_n,g_n^{-1}h_n^{-1}x_{n+1})<1/2^n.
 \end{align*}
   By setting $z_{n+1}=g_n^{-1}h_n^{-1}x_{n+1}$ and $g_{n+1}=h_ng_n$, we end the induction. 

   If $z\in X$ satisfies $z_n\to z$, then $\rho(x_n,z)\leq d(z_n,z)\to0$ as $n\to\infty$. Therefore, $\rho$ is complete.
\end{proof}

\begin{proposition}\label{age(E) is complete when E is auh}
If $E$ is a Banach space, then 
$(\mathrm{Age}(E),D_E)$ is complete.  If $E$ is \textbf{AUH}, then the function $\gamma_E$ coincides with $D_E$, and is therefore a complete metric. 
\end{proposition}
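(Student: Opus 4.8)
The plan is to prove the two assertions in sequence, the first being essentially a citation of the earlier Lemma, the second requiring the genuine work. For the completeness of $(\mathrm{Age}(E),D_E)$: recall that $D_E$ is exactly the pseudometric $\rho$ built in Lemma \ref{pseudometric complete induced by G} from the Hausdorff metric $d_H$ on the set of (closed unit balls of) $k$-dimensional subspaces of $E$, with $G=\mathrm{Isom}(E)$ acting by $T\cdot X = TX$. So I first need to observe that $(\mathrm{Age}_k(E),d_H)$ — or rather the relevant space of unit balls — is a complete metric space: a $d_H$-Cauchy sequence of unit balls of $k$-dimensional subspaces converges in the Hausdorff metric to a closed convex symmetric set which, because the subspaces all have dimension exactly $k$ and sit inside the fixed ambient space $E$, is again the unit ball of a $k$-dimensional subspace; the dimension cannot drop because a uniform lower bound on the "width" of $B_X$ in $k$ independent directions is preserved under $d_H$-limits, and cannot rise since everything lives in directions spanned by limits of the approximating bases. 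Granting this, $d_H$ is clearly $\mathrm{Isom}(E)$-invariant (isometries preserve the ambient norm hence the Hausdorff distance between balls), so Lemma \ref{pseudometric complete induced by G} applies verbatim and yields that $D_E$ is a complete pseudometric; it is a metric by the remark preceding the statement. One does this on $\mathrm{Age}_k(E)$ for each $k$ and notes $\mathrm{Age}(E)=\bigsqcup_k \mathrm{Age}_k(E)$ with $D_E$ taking value $+\infty$ (or simply: the distance is only finite within a fixed $k$, since $d_H(B_X,B_Y)<\infty$ forces $\dim X=\dim Y$).

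For the second assertion, assume $E$ is \textbf{AUH}. I must show $\gamma_E = D_E$; since $\gamma_E\le D_E$ always, only $\gamma_E\ge D_E$ needs proof, i.e.\ given $X,Y\in\mathrm{Age}_k(E)$ and $\varepsilon>0$ with $d_H(B_{X_0},B_{Y_0})<\gamma_E(X,Y)+\varepsilon$ for some isometric copies $X_0\equiv X$, $Y_0\equiv Y$, I want to produce $T\in\mathrm{Isom}(E)$ with $d_H(B_{TX},B_Y)$ not much larger. The idea: fix a linear isometry $u\colon X\to X_0$ and a linear isometry $v\colon Y\to Y_0$. These are embeddings $X_0\hookrightarrow E$ (namely $u^{-1}$ viewed appropriately) — more carefully, identify $X_0,Y_0$ as subspaces of $E$ directly, so that $\mathrm{id}_{X_0},\mathrm{id}_{Y_0}\in\mathrm{Emb}(\cdot,E)$, and there are linear isometries $X\to X_0\subseteq E$ and $X\to Y_0$? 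That is not quite what's available. Instead: the content of $\gamma_E$ is that there exist isometric embeddings $\phi\colon Z\to E$ and $\psi\colon Z\to E$ of some abstract $k$-dimensional space $Z$ with $\phi(Z)\equiv X$, $\psi(Z)\equiv Y$ and $d_H(B_{\phi(Z)},B_{\psi(Z)})$ close to $\gamma_E(X,Y)$. Compose: since $\phi(Z)\equiv X$ there is (after an isometry of $E$, using \textbf{AUH}) an isometry moving $\phi(Z)$ onto $X$; apply \textbf{AUH} to the embedding $\phi$ of $Z$ (equivalently to identify $\phi(Z)$ with the standard copy $X$) and then to $\psi$, to get $S_1,S_2\in\mathrm{Isom}(E)$ with $S_1\phi(Z)=X$ "up to $\varepsilon$" and $S_2\psi(Z)=Y$ "up to $\varepsilon$." Then $T:=S_2 S_1^{-1}$ sends $X\approx S_1\phi(Z)$ to $S_2\psi(Z)\approx Y$, and $d_H(B_{TX},B_Y)\le d_H(B_{S_2\psi(Z)},B_Y) + (\text{error from }S_1) \le d_H(B_{\phi(Z)},B_{\psi(Z)}) + O(\varepsilon) \le \gamma_E(X,Y)+O(\varepsilon)$, using that $S_2$ is an isometry of $E$ so preserves $d_H$. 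Letting $\varepsilon\to 0$ gives $D_E(X,Y)\le\gamma_E(X,Y)$.

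The step I expect to be the main obstacle is the bookkeeping in the last paragraph: \textbf{AUH} only guarantees approximate transposition of embeddings, not exact (that would be \textbf{UH}), so each application of \textbf{AUH} introduces an $\varepsilon$ in operator norm which I must convert into an $\varepsilon$ in Hausdorff distance between unit balls and track through the composition $S_2S_1^{-1}$; I will need the elementary fact that if $\|a-b\|<\varepsilon$ as maps into $E$ (with $a$ an isometric embedding) then $d_H(\overline{a(B_Z)},\overline{b(B_Z)})\le\varepsilon$, plus the triangle inequality for $d_H$ and its invariance under $\mathrm{Isom}(E)$. A secondary technical point, flagged above, is the claim that $d_H$-limits of unit balls of $k$-dimensional subspaces are again unit balls of $k$-dimensional subspaces (lower semicontinuity of dimension in the wrong direction is the worry) — this needs a short compactness/John-ellipsoid or volume argument but is routine. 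I would also note explicitly that since \cite{ferenczi et all} already proves $\gamma_E=D_E$ for \textbf{AUH} spaces (Proposition 2.14 there), one may if desired simply cite it; but giving the argument here makes the completeness statement self-contained.
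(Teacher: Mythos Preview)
Your approach matches the paper's exactly for the first assertion: identify $D_E$ as the pseudometric of Lemma~\ref{pseudometric complete induced by G} built from the $\mathrm{Isom}(E)$-invariant Hausdorff metric, so completeness reduces to showing that the set of unit balls $\{B_X : X\in\mathrm{Age}_k(E)\}$ is $d_H$-closed in the complete space $(K(E),d_H)$. The paper's argument for this closure is a bit crisper than your width/volume sketch: the $d_H$-limit $A$ is automatically a nonempty balanced compact convex set, so $X_0:=\bigcup_{\lambda>0}\lambda A$ is a linear subspace with $B_{X_0}=A$; then $\dim X_0=k$ follows from $d_H(B_{X_{n_0}},B_{X_0})<1/2k$ via the standard perturbation-of-Auerbach-basis estimate (cf.\ Lemma~\ref{continuity of BM respect d_E}(1)). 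You might replace your informal ``width'' remark with this.

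For the second assertion the paper simply cites \cite[Proposition~2.14]{ferenczi et all}, whereas you supply the argument; your outline is correct and is indeed how that proposition is proved. One slip to fix: you momentarily write $\phi,\psi\colon Z\to E$ with a \emph{common} domain $Z$ satisfying $\phi(Z)\equiv X$ and $\psi(Z)\equiv Y$, which would force $X\equiv Y$. What $\gamma_E$ actually gives is isometric embeddings $\phi\colon X\to E$ and $\psi\colon Y\to E$ (equivalently subspaces $X_0=\phi(X)$, $Y_0=\psi(Y)$). With that correction your bookkeeping goes through: \textbf{AUH} yields $S_1,S_2\in\mathrm{Isom}(E)$ with $\|S_1|_X-\phi\|<\varepsilon$ and $\|S_2|_Y-\psi\|<\varepsilon$, hence $d_H(B_{S_1X},B_{X_0})\le\varepsilon$ and $d_H(B_{S_2Y},B_{Y_0})\le\varepsilon$; setting $T=S_2^{-1}S_1$ and using $\mathrm{Isom}(E)$-invariance of $d_H$ gives $d_H(B_{TX},B_Y)\le \gamma_E(X,Y)+3\varepsilon$, as you anticipated.
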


\begin{proof}
The second part is \cite[Proposition 2.14]{ferenczi et all}. For the first part,
let $K(E)$ be the family of nonempty compact subsets of $E$. Since the map $\Psi\colon X\in\mathrm{Age}(E)\mapsto B_X\in K(E)$ is injective and $(K(E),d_H)$ is complete, it suffices to check that $\Psi(\mathrm{Age}(E))$ is $d_H$-closed to prove that it is $d_H$-complete.

Let $(X_n)$ be a sequence in $\mathrm{Age}(E)$ such that $\Psi(X_n)=B_{X_n}\stackrel{d_H}{\to} A$. In particular there exists $k$ such that $X_n \in \mathrm{Age}_k(E)$ for each $n\in\mathbb N$. Note that the limit $A$ is a non-empty balanced compact convex subset of $E$. So $X_0=\bigcup_{\lambda>0}\lambda A$ is a subspace of $E$ and $\Psi(X_0)=B_{X_0}=A$.
Finally, by taking $n_0\in\mathbb N$ such that $d_H(B_{X_{n_0}},B_{X_0})<1/2k$, we obtain that $\dim X_0=k$, that is, $X_0\in\mathrm{Age}_k(E)$. 

Now, since $d_H$ is $\mathrm{Isom}(E)$-invariant, the conclusion of the Proposition follows from Lemma \ref{pseudometric complete induced by G}.
\end{proof}

Now, following the above ideas, we introduce  ``almost" versions $D_E^a$ of $D_E$, and $\gamma_E^a$ of $\gamma_E$. These functions are   intended to be relevant to the case when $E$ is almost \textbf{UH} (and not necessarily \textbf{UH}), or weak \textbf{AF} (and not necessarily weak Fra\"iss\'e). While $D_E^a$ will be defined on $\mathrm{Age}_k(E)$, an interesting new feature of $\gamma_E^a$ is that it will be defined on $\overline{\mathrm{Age}(E)}^{\mathrm{BM}}$.

\begin{definition}
If $X,Y\in\mathrm{Age}(E)$, we let 
\begin{gather*}
    D_\delta(X,Y)=\inf\{d_H(T(B_{X}),U(B_{Y}))\,\colon\,T,U\in\mathrm{Isom}_\delta(E)\}
\end{gather*}
and 
\begin{gather*}
D_E^a(X,Y)\coloneqq\lim_{\delta\to0}D_\delta(X,Y)=\sup_{\delta>0}D_\delta(X,Y).
\end{gather*}
\end{definition}

We note the following:
\begin{fact}
Let $E$ be a Banach space. Then $D_E^a(X,Y)$ is a pseudometric on
$\mathrm{Age}(E)$ such that $D_E^a(X,Y) \leq D_E(X,Y)$.
\end{fact}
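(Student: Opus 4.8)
The statement has three parts: $D_E^a$ is a pseudometric, $D_E^a(X,Y)\le D_E(X,Y)$, and implicitly that the $\sup_{\delta>0}$ and $\lim_{\delta\to0}$ descriptions of $D_E^a$ agree. I would begin with the last point, which is essentially bookkeeping: if $0<\delta'<\delta$ then $\mathrm{Isom}_{\delta'}(E)\subseteq\mathrm{Isom}_{\delta}(E)$, so the infimum defining $D_\delta$ is over a larger set, hence $D_{\delta'}(X,Y)\ge D_\delta(X,Y)$; thus $\delta\mapsto D_\delta(X,Y)$ is nonincreasing as $\delta\to 0$, so the limit exists and equals the supremum. This also immediately gives $D_E^a(X,Y)\le D_E(X,Y)$, since taking $T,U\in\mathrm{Isom}(E)=\mathrm{Isom}_0(E)$ (which are legitimate competitors for every $\delta>0$) shows $D_\delta(X,Y)\le D_E(X,Y)$ for all $\delta$, and then pass to the sup.

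Next I would verify the pseudometric axioms for $D_E^a$. Nonnegativity is clear, and symmetry of each $D_\delta$ is immediate from the symmetric role of $T$ and $U$ together with the fact that $\mathrm{Isom}_\delta(E)$ is the same set for the $X$ and $Y$ slots; symmetry passes to the sup. Reflexivity $D_E^a(X,X)=0$ follows since $D_\delta(X,X)\le d_H(\mathrm{id}(B_X),\mathrm{id}(B_X))=0$ for every $\delta$. The only axiom requiring care is the triangle inequality, and here I expect the main (mild) obstacle: one must compose $\varepsilon$-isometries, whose defect multiplies rather than adds. Given $X,Y,Z\in\mathrm{Age}(E)$ and $\eta>0$, pick $T_1,U_1\in\mathrm{Isom}_\delta(E)$ with $d_H(T_1(B_X),U_1(B_Y))<D_\delta(X,Y)+\eta$ and $T_2,U_2\in\mathrm{Isom}_\delta(E)$ with $d_H(T_2(B_Y),U_2(B_Z))<D_\delta(Y,Z)+\eta$. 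Apply the isometry $U_1 U_2^{-1}\in\mathrm{Isom}_{\delta'}(E)$ (for a suitable $\delta'$ depending on $\delta$, since a composition of a $\delta$-isometry with the inverse of a $\delta$-isometry is a $\delta'$-isometry with $\delta'\to 0$ as $\delta\to 0$) to the second estimate; since $d_H$ is distorted by at most the norm-factor of the applied map, one gets $d_H\bigl(U_1 U_2^{-1}T_2(B_Y),\,U_1(B_Z)\bigr)$ close to $D_\delta(Y,Z)$. Combining with the first estimate via the ordinary triangle inequality for $d_H$, and noting that $U_1 U_2^{-1}T_2$ and $T_1$ both lie in $\mathrm{Isom}_{\delta''}(E)$ for an appropriate $\delta''$, yields $D_{\delta''}(X,Z)\lesssim D_\delta(X,Y)+D_\delta(Y,Z)+2\eta$ plus an error term that vanishes as $\delta\to 0$ (because $U_1(B_Z)$ differs from a genuine $\mathrm{Isom}_{\delta''}$-image of $B_Z$ by a Hausdorff amount controlled by $\delta$). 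Letting $\delta\to 0$ on both sides — using that $D_\delta\to D_E^a$ monotonically — and then $\eta\to 0$ gives $D_E^a(X,Z)\le D_E^a(X,Y)+D_E^a(Y,Z)$.

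The technical heart is simply the elementary estimate that if $S$ is a $\delta$-isometry then $d_H(S(A),S(B))\le(1+\delta)\,d_H(A,B)$ and $d_H(S(A),A)$ is small when $\delta$ is small and $A$ is bounded, together with the composition bound for defects of $\varepsilon$-isometries; these are routine and I would state them as a short preliminary observation rather than belabor them. No compactness or completeness input is needed for this Fact — that material is reserved for later results about $\gamma_E^a$ and $D_E^a$ on $\overline{\mathrm{Age}(E)}^{\mathrm{BM}}$.
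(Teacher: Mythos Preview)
Your strategy is essentially the paper's: reduce the triangle inequality to the distortion estimate $d_H(T(A),T(B))\le(1+\delta)\,d_H(A,B)$ for $T\in\mathrm{Isom}_\delta(E)$, compose near-isometries to align the two images of $B_Y$, and let $\delta\to0$. The paper records this as the single inequality $D_\delta(X,Y)+D_\delta(Y,Z)+\varepsilon\ge(1+\delta)^{-1}D_{\delta^2+2\delta}(X,Z)$.

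There is, however, a concrete slip in your alignment step. With $T_1,U_1$ witnessing the $(X,Y)$ estimate and $T_2,U_2$ the $(Y,Z)$ estimate, the two copies of $B_Y$ appear under $U_1$ and under $T_2$, so the operator to apply to the second inequality is $U_1T_2^{-1}$, not $U_1U_2^{-1}$. Your choice sends $U_2(B_Z)$ to $U_1(B_Z)$ but sends $T_2(B_Y)$ to $U_1U_2^{-1}T_2(B_Y)$, and there is no reason for this to be Hausdorff-close to $U_1(B_Y)$: a $\delta$-isometry is close to \emph{some} isometry, not to the identity, so the middle term in your Hausdorff chain is uncontrolled. (Relatedly, your parenthetical that ``$U_1(B_Z)$ differs from a genuine $\mathrm{Isom}_{\delta''}$-image of $B_Z$'' is off, since $U_1\in\mathrm{Isom}_\delta(E)\subseteq\mathrm{Isom}_{\delta''}(E)$ already.) Replacing $U_2^{-1}$ by $T_2^{-1}$ gives
\[
d_H\bigl(U_1(B_Y),\,U_1T_2^{-1}U_2(B_Z)\bigr)\le(1+\delta)^2\bigl(D_\delta(Y,Z)+\eta\bigr),
\]
which now combines directly with $d_H(T_1(B_X),U_1(B_Y))<D_\delta(X,Y)+\eta$; since $T_1\in\mathrm{Isom}_\delta(E)$ and $U_1T_2^{-1}U_2\in\mathrm{Isom}_{(1+\delta)^3-1}(E)$, you obtain a bound on $D_{\delta''}(X,Z)$ with no leftover error term, and letting $\delta,\eta\to0$ finishes the proof exactly as you intended.
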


\begin{proof} We use the immediate fact: if $A,B \subset E$ are compact and $T \in \mathrm{Isom}_\delta(E)$, then 
$d_H(T(A),T(B)) \leq (1+\delta)d_H(A,B)$. From this we obtain for each $\varepsilon,\delta>0$
$$D_\delta(X,Y)+D_\delta(Y,Z)+\varepsilon \geq (1+\delta)^{-1} D_{\delta^2+2\delta}
(X,Z).$$ The triangle inequality follows when $\varepsilon,\delta$ tends to zero. 
 \end{proof}

In contrast to $D_E$, we have no reason to think that $D_E^a$ is a complete pseudo metric in general. We now turn to the definition of the function $\gamma_E^a$.

\begin{definition} Let $E$ be a Banach space.
If $X,Y\in\overline{\mathrm{Age}_k(E)}^{\mathrm{BM}}$, we set
\begin{gather*}
         d_\delta(X,Y)=\inf\{d_H(t(B_{X}),t'(B_{Y}))\,\colon\,t\in\mathrm{Emb}_\delta(X,E),t'\in\mathrm{Emb}_\delta(Y,E)\}
\end{gather*}
and 
\begin{gather*}
\gamma_E^a(X,Y)\coloneqq\lim_{\delta\to0}d_\delta(X,Y)=\sup_{\delta>0}d_\delta(X,Y).
\end{gather*}
\end{definition}

\begin{lemma}\label{alternative definitions of gamma_E^a}
The following functions give alternative definitions of $\gamma_E^a$: for $X,Y\in\overline{\mathrm{Age}_k(E)}^{\mathrm{BM}}$, we set
\begin{align*}
     d_\delta^1(X,Y)&=\inf\{d_H(B_{tX},B_{sY})\,\colon\,t\in\mathrm{Emb}_\delta(X,E),s\in\mathrm{Emb}_\delta(Y,E)\},\quad\mbox{and}\\
     d_\delta^2(X,Y)&=\inf\{d_H(t(B_{X}),B_{sY})\,\colon\,t\in\mathrm{Emb}_\delta(X,E),s\in\mathrm{Emb}_\delta(Y,E)\}.
\end{align*}
Then 
\begin{gather*}
\gamma_E^a(X,Y)=\lim_{\delta\to0}d_\delta^1(X,Y)=\lim_{\delta\to0}d_\delta^2(X,Y).
\end{gather*}
Moreover, if $X,Y\in\mathrm{Age}(E)$ and 
\begin{gather*}
     D_\delta^1(X,Y)=\inf\{d_H(B_{UX},B_{VY})\,\colon\,U,V\in\mathrm{Isom}_\delta(E)\},
\end{gather*}
then 
\begin{gather*}
D_E^a(X,Y)=\lim_{\delta\to0}D_\delta^1(X,Y)
=\sup_{\delta>0}D_\delta^1(X,Y).
\end{gather*}
\end{lemma}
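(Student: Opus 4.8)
The statement has three essentially parallel assertions: that $\gamma_E^a$ can be computed via $d_\delta^1$ or $d_\delta^2$ in place of $d_\delta$, and that $D_E^a$ can be computed via $D_\delta^1$ in place of $D_\delta$. The plan is to reduce each claimed alternative to the original definition by squeezing the three quantities between one another for comparable values of $\delta$. The key elementary fact to isolate first is: if $t\in\mathrm{Emb}_\delta(X,E)$ and $B_{tX}$ denotes the unit ball of the range space $tX$ (with its inherited norm), then $B_{tX}$ and $t(B_X)$ are close in $d_H$ in a way controlled by $\delta$; more precisely, $(1+\delta)^{-1}t(B_X)\subset B_{tX}\subset(1+\delta)t(B_X)$, so $d_H(t(B_X),B_{tX})\to 0$ as $\delta\to 0$, uniformly over all such $t$ since $B_{tX}\subset (1+\delta)B_E$ gives a uniform bound on the relevant diameters. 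Combined with the contraction estimate $d_H(t(A),t(B))\le(1+\delta)d_H(A,B)$ already used in the preceding fact, this lets one pass freely between $d_H(t(B_X),t'(B_Y))$, $d_H(B_{tX},t'(B_Y))$ and $d_H(B_{tX},B_{sY})$ at the cost of errors that vanish with $\delta$.

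First I would prove $\gamma_E^a=\lim_{\delta\to0}d_\delta^2$. For one direction, given $t\in\mathrm{Emb}_\delta(X,E)$, $t'\in\mathrm{Emb}_\delta(Y,E)$ realizing $d_\delta(X,Y)$ up to $\varepsilon$, replace $t'(B_Y)$ by $B_{t'Y}$ using the above closeness estimate to get $d_\delta^2(X,Y)\le d_\delta(X,Y)+o_\delta(1)+\varepsilon$. Conversely, since $B_{sY}$ is itself $t'(B_Y)$ for $t'=s$ up to the same small perturbation (indeed $B_{sY}$ is the unit ball of $sY$, and $s\in\mathrm{Emb}_\delta(Y,E)$, so $B_{sY}$ is $d_H$-close to $s(B_Y)$), we get $d_\delta(X,Y)\le d_\delta^2(X,Y)+o_\delta(1)$. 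Taking $\delta\to0$ (equivalently $\sup_\delta$, since all these quantities are monotone in $\delta$ — a fact worth noting explicitly) gives equality. The argument for $d_\delta^1$ is identical, perturbing both arguments of $d_H$ to the unit balls of the ranges. Finally, for $D_E^a=\lim_{\delta\to0}D_\delta^1$, the same reasoning applies verbatim with $T,U\in\mathrm{Isom}_\delta(E)$ in place of the embeddings $t,t'$: here $T(B_X)$ and $B_{TX}$ are close because $T$ is a $\delta$-isometry restricted to $X$, so the ball of $TX$ sits between $(1+\delta)^{-1}T(B_X)$ and $(1+\delta)T(B_X)$, and $d_H(T(A),T(B))\le(1+\delta)d_H(A,B)$ handles the transfer.

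The main obstacle — really the only non-bookkeeping point — is to state and verify cleanly the uniform estimate $d_H\big(t(B_X),B_{tX}\big)\le f(\delta)$ with $f(\delta)\to0$, uniformly over $X\in\overline{\mathrm{Age}_k(E)}^{\mathrm{BM}}$ and $t\in\mathrm{Emb}_\delta(X,E)$. The inclusions $(1+\delta)^{-1}t(B_X)\subset B_{tX}\subset(1+\delta)t(B_X)$ are immediate from the definition of a $\delta$-isometry, and since every point of $B_{tX}$ has norm $\le1+\delta$, scaling a point of $B_{tX}$ by $(1+\delta)^{-1}$ moves it by at most $\delta$, and symmetrically; hence $d_H(t(B_X),B_{tX})\le\delta$. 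With this in hand every other inequality in the proof is a one-line application of the triangle inequality for $d_H$ together with the contraction bound, and the monotonicity in $\delta$ ensures $\lim_{\delta\to0}$ and $\sup_{\delta>0}$ agree throughout. I would also remark that all four functions are defined on the stated domains because $\overline{\mathrm{Age}_k(E)}^{\mathrm{BM}}$ is compact (so the relevant $\mathrm{Emb}_\delta(X,E)$ are nonempty for $\delta$ large enough, and the infima are over nonempty sets once $\delta$ exceeds $d_{\mathrm{BM}}(X,\cdot)$ to a genuine subspace), which is exactly the point already recorded before Proposition~\ref{equivalent definitions of Fraisse}.
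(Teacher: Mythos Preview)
Your proposal is correct and follows essentially the same approach as the paper: both isolate the key estimate $d_H\big(t(B_X),B_{tX}\big)\le\delta$ from the inclusions $(1+\delta)^{-1}t(B_X)\subset B_{tX}\subset(1+\delta)t(B_X)$ (equivalently, the paper's $(1+\delta)^{-1}B_{tX}\subset t(B_X)\subset(1+\delta)B_{tX}$), and then apply the triangle inequality for $d_H$ to conclude that $d_\delta$, $d_\delta^1$, $d_\delta^2$ differ by at most $2\delta$. Your additional remarks on monotonicity, the contraction bound, and nonemptiness of the embedding sets are harmless but not needed---the paper's proof is simply the two-line triangle-inequality computation after the $\delta$-estimate, and one minor slip in your write-up (points of $B_{tX}$ have norm $\le1$, not $\le1+\delta$) only strengthens the bound you want.
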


\begin{proof}
If $u\colon E\to F$ is a $\delta$-isometry, then $\dfrac{1}{1+\delta}B_{uE}\subset u(B_E)\subset (1+\delta)B_{uE}$. So, $d_H(u(B_E),B_{uE})\leq\delta$. Hence 
if $t\in\mathrm{Emb}_\delta(X,E)$ and $s\in\mathrm{Emb}_\delta(Y,E)$, then
\begin{align*}
      d_H(t(B_X),s(B_Y))&\leq 2\delta+d_H(B_{tX},B_{sY}),\quad\mbox{and}\\
      d_H(B_{tX},B_{sY})&\leq 2\delta+d_H(t(B_X),s(B_Y)).
\end{align*}
Thus $\gamma_E^a(X,Y)=\displaystyle\lim_{\delta\to0}d_\delta^1(X,Y)$. The other statements are proved similarly. 
\end{proof}

Before comparing this function to the more classical ones, we observe an easy  consequence of its definition.
It is inspired from \cite[Proposition 2.14]{ferenczi et all}, where it was proved that $d_{\mathrm{BM}}(X,Y)\leq 4kd_H(B_X,B_Y)$ for each $X,Y\in\mathrm{Age}_k(E)$ such that $d_H(B_X,B_Y)<1/2k$.

\begin{lemma}\label{continuity of BM respect d_E}
Let $E$ be a Banach space and $X,Y\in\overline{\mathrm{Age}_k(E)}^{\mathrm{BM}}$. 
\begin{enumerate}
\item If $X,Y\in\mathrm{Age}_k(E)$ and $d_H(X,Y)\leq d<1/2k$, then there exists an isomorphism $\lambda\colon X\to Y$ such that $\|\lambda\|\leq1+kd$, $\|\lambda^{-1}\|\leq1/(1-kd)$ and $\|\lambda-\mathrm{Id}\|\leq kd$.
\item Suppose that $X,Y\in\mathrm{Age}_k(E)$ and let $\sigma>0$ be given. If $\lambda\colon X\to Y$ is an $\varepsilon$-perturbation of ${\rm Id}$ with $k\varepsilon\leq\sigma/(2+\sigma)$, there exists $T\in\mathrm{Isom}_{\sigma}(E)$ such that $T|_X=\lambda$.

    \item 
$d_{\mathrm{BM}}(X,Y)\leq 4k \gamma_{E}^a(X,Y)$ whenever $\gamma_E^a(X,Y)<1/2k$, and 
 $\gamma_E^a(X,Y)=0$ if and only if $X \equiv Y$.
 \item $\gamma_E^a$ is $\mathrm{BM}$-lower semicontinuous on $\overline{\mathrm{Age}_k(E)}^{\mathrm{BM}}$, in the sense
 that if \linebreak$\lim_n d_{\mathrm{BM}}(X_n,X)=0$ and $\lim_n d_{\mathrm{BM}}(Y_n,Y)=0$, then
 \begin{gather*}
     \gamma_E^a(X,Y) \leq \liminf \gamma_E^a(X_n,Y_n).
 \end{gather*}
\end{enumerate}
\end{lemma}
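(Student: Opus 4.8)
\textbf{Proof plan for Lemma \ref{continuity of BM respect d_E}.}

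The plan is to treat the four items essentially independently, using earlier parts as we go. For (1), I would argue directly from the Hausdorff distance: if $d_H(B_X,B_Y)\le d$, then for each $x\in B_X$ there is $y\in B_Y$ with $\|x-y\|\le d$, and vice versa. Fixing a basis $(e_i)_{i=1}^k$ of $X$ with $\|e_i\|=1$, choose $f_i\in Y$ with $\|e_i-f_i\|\le d$, and let $\lambda\colon X\to Y$ be $\lambda e_i=f_i$ extended linearly; one checks $\|\lambda-\mathrm{Id}\|\le kd$ using a coordinate bound of the form $|a_i|\le \|\sum a_j e_j\|$-type estimate (the constant $k$ coming from an Auerbach-type basis or from the crude bound on coordinate functionals), whence $\|\lambda\|\le 1+kd$ and, since $kd<1/2$, $\lambda$ is invertible with $\|\lambda^{-1}\|\le (1-kd)^{-1}$. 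One must also check $\lambda$ is onto $Y$, which follows from $\dim X=\dim Y=k$ and injectivity. For (2), I would imitate Remark \ref{AUH implies UH}: take a projection $P\colon E\to X$ with $\|P\|\le k$, and define $T=\mathrm{Id}+(\lambda-\mathrm{Id})\circ P$, viewing $\lambda-\mathrm{Id}$ as an operator into $E$; then $T|_X=\lambda$, $\|T\|\le 1+k\varepsilon$ and $\|T^{-1}\|\le(1-k\varepsilon)^{-1}$, and the hypothesis $k\varepsilon\le\sigma/(2+\sigma)$ is exactly what makes $\max\{\|T\|,\|T^{-1}\|\}\le 1+\sigma$, so $T\in\mathrm{Isom}_\sigma(E)$.

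For (3), the inequality $d_{\mathrm{BM}}(X,Y)\le 4k\gamma_E^a(X,Y)$ when $\gamma_E^a(X,Y)<1/2k$ is the main quantitative point. Given $\gamma_E^a(X,Y)=\sup_\delta d_\delta(X,Y)<1/2k$, fix small $\delta$ and $\eta>0$ and pick $t\in\mathrm{Emb}_\delta(X,E)$, $t'\in\mathrm{Emb}_\delta(Y,E)$ with $d_H(t(B_X),t'(B_Y))\le \gamma_E^a(X,Y)+\eta<1/2k$. Then $tX,t'Y\in\mathrm{Age}_k(E)$ and by part (1) there is an isomorphism $\mu\colon tX\to t'Y$ with $\log(\|\mu\|\|\mu^{-1}\|)\le 4k\,d_H(t(B_X),t'(B_Y))$ (using $-\log(1-s)\le 2s$ for small $s$ to pass from the norm bounds of (1) to the logarithmic Banach--Mazur form). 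Composing, $(t')^{-1}\mu t\colon X\to Y$ is an isomorphism whose distortion is controlled by that of $\mu$ together with the $\delta$-factors from $t,t'$; letting $\delta,\eta\to0$ yields $d_{\mathrm{BM}}(X,Y)\le 4k\gamma_E^a(X,Y)$. For the second assertion of (3): $\gamma_E^a(X,Y)=0$ gives, for each $\delta$, embeddings $t_\delta,t'_\delta$ with $d_H(t_\delta(B_X),t'_\delta(B_Y))\to0$; then $(t'_\delta)^{-1}\mu_\delta t_\delta$ is a sequence of isomorphisms $X\to Y$ with distortion tending to $1$, so $d_{\mathrm{BM}}(X,Y)=0$, and since $X,Y$ lie in the compact metric space $\overline{\mathrm{Age}_k(E)}^{\mathrm{BM}}$ this forces $X\equiv Y$; the converse is trivial from the definition.

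For (4), lower semicontinuity, suppose $d_{\mathrm{BM}}(X_n,X)\to0$ and $d_{\mathrm{BM}}(Y_n,Y)\to0$. Given $\delta>0$, I want to show $d_\delta(X,Y)\le\liminf_n \gamma_E^a(X_n,Y_n)+o(1)$: for large $n$ there is an $\varepsilon_n$-isometry $\ell_n\colon X\to X_n$ (and $\ell_n'\colon Y\to Y_n$) with $\varepsilon_n\to0$; given near-optimal $t_n\in\mathrm{Emb}_{\delta_n}(X_n,E)$, $t_n'\in\mathrm{Emb}_{\delta_n}(Y_n,E)$ witnessing $\gamma_E^a(X_n,Y_n)$, the compositions $t_n\ell_n\in\mathrm{Emb}_{\delta_n+\varepsilon_n+\delta_n\varepsilon_n}(X,E)$ and $t_n'\ell_n'$ are admissible for $d_{\delta}(X,Y)$ once $n$ is large enough that $\delta_n+\varepsilon_n+\delta_n\varepsilon_n\le\delta$, and $d_H(t_n\ell_n(B_X),t_n'\ell_n'(B_Y))$ differs from $d_H(t_n(B_{X_n}),t_n'(B_{Y_n}))$ by a quantity $\to0$ (since $\ell_n(B_X)$ and $B_{X_n}$ are Hausdorff-close, using a bound like $d_H(u(B_X),B_{uX})\le\varepsilon_n$ as in the proof of Lemma \ref{alternative definitions of gamma_E^a}). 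Taking $\liminf$ over $n$ and then $\sup$ over $\delta$ gives $\gamma_E^a(X,Y)\le\liminf_n\gamma_E^a(X_n,Y_n)$. The main obstacle I anticipate is bookkeeping the various $\varepsilon$'s in (3) and (4) — in particular, making sure the perturbation isomorphisms from (1) compose cleanly with the $\delta$-isometries so that all distortions multiply rather than accumulate uncontrollably, and verifying that the ``$4k$'' constant genuinely survives the passage to the logarithmic Banach--Mazur distance. The other parts are routine.
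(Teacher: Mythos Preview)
Your plan is correct and follows essentially the same route as the paper. Two minor points of difference worth noting: in (3), the paper invokes the $d_\delta^1$ formulation from Lemma~\ref{alternative definitions of gamma_E^a} (i.e.\ $d_H(B_{tX},B_{sY})$ rather than $d_H(t(B_X),s(B_Y))$), so that part (1) applies directly to the \emph{unit balls} of $tX$ and $sY$; in your version you would need the extra step $d_H(t(B_X),B_{tX})\le\delta$, which is exactly the bookkeeping you anticipate. In (4), the paper compresses your composition argument into the single clean inequality $d_\delta(X',Y')\ge d_{\delta+\varepsilon+\delta\varepsilon}(X,Y)$ whenever there are $(1+\varepsilon)$-isometries $X\to X'$ and $Y\to Y'$, then takes $\liminf$ and lets $\delta,\varepsilon\to 0$; this is your argument rephrased, without tracking near-optimal witnesses explicitly.
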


\begin{proof} 
\begin{enumerate}
\item Fix an Auerbach basis of $X$, $\{x_1,\ldots,x_k\}$. From definition of $d_H$, for each $j=1,\ldots,k$ there are $y_j\in Y$ satisfying $\|x_j-y_j\|\leq d$. If $a_1,\ldots,a_k\in\mathbb K$, we have
   \begin{gather*}
       (1-kd)\left\|\sum_{j=1}^ka_jx_j'\right\|\leq\left\|\sum_{j=1}^ka_jy_j'\right\|\leq(1+kd)\left\|\sum_{j=1}^ka_jx_j'\right\|.
   \end{gather*}
   Thus, $\lambda\colon X\to Y$ defined linearly by $x_j\in X\mapsto y_j\in Y$ is an isomorphism with  $\|\lambda\|\leq1+kd$, $\|\lambda^{-1}\|\leq1/(1-kd)$
   and $\|\lambda-\mathrm{Id}\|\leq kd$.

\item If $P_X\colon E\to X$ is a projection with $\|P_X\|\leq k$, let $T\colon E\to E$ be defined by $T={\rm Id}-({\rm Id}_X-\lambda)\circ P_X$. 
Note that  $T|_X=\lambda$, $\|T\|\leq 1+k\varepsilon\leq 1+\sigma$ and $\|T^{-1}\|\leq 1/(1-k\varepsilon)\leq 1+\sigma$. Hence, $T\in\mathrm{Isom}_\sigma(E)$.

    \item Let $d>0$ be such that $\gamma_E^a(X,Y)<d<1/2k$. Also let $\delta>0$ be such that 
    $d_\delta^1(X,Y)<d<1/2k$. So there are $t\in\mathrm{Emb}_\delta(X,E)$ and $s\in\mathrm{Emb}_\delta(Y,E)$
    satisfying $d_H(B_{tX},B_{sY})<d$. 
    
    Write $X'=tX$ and $Y'=sY$. By Item (1), there exists an isomorphism $\lambda\colon X'\to Y'$ such that 
    $\|\lambda\|\leq1+kd$ and $\|\lambda^{-1}\|\leq1/(1-kd)$.
  Hence,
   $d_{\mathrm{BM}}(X,Y)\leq4\log(1+\delta)+\log(\frac{1+kd}{1-kd})\leq4\log(1+\delta)+ 4kd$. Since $\delta,d$ were arbitrary, we conclude that $d_{\mathrm{BM}}(X,Y)\leq 4k\gamma_E^a(X,Y)$.

    \item Note that if $X,Y, X',Y' \in \overline{\mathrm{Age}_k(E)}^{\mathrm{BM}}$, and $s\colon X \rightarrow X'$,
$t\colon Y \rightarrow Y'$ are $1+\varepsilon$-isometric maps,
then
\begin{align*}
    d_\delta(X',Y')&=\inf\{d_H(B_{uX'},B_{vY'})\,\colon\,u\in\mathrm{Emb}_\delta(X,E),v\in\mathrm{Emb}_\delta(Y,E)\}\\
    &=\inf\{d_H(B_{usX},B_{vtY})\,\colon\,u\in\mathrm{Emb}_\delta(X,E),v\in\mathrm{Emb}_\delta(Y,E)\}\\
    &\geq d_{\delta+\varepsilon+\delta\varepsilon}(X,Y).
\end{align*}
In particular, if $\lim_n d_{\mathrm{BM}}(X_n,X)=0$
and $\lim_n d_{\mathrm{BM}}(Y_n,Y)=0$, then
$$d_{\delta+\varepsilon+\delta\varepsilon}(X,Y) \leq \liminf d_\delta(X_n,Y_n) \leq \liminf \gamma_E^a(X_n,Y_n)$$
and since $\delta$ and $\varepsilon$ were arbitrary,
\begin{gather*}
    \gamma_E^a(X,Y)\leq \liminf \gamma_E^a(X_n,Y_n).\qedhere
\end{gather*}
\end{enumerate}
\end{proof}

Now we list some relationships between the above defined functions and the different forms of ultrahomogeneity.

\begin{proposition}\label{relationships}
 Let $E$ be a Banach space and
     $X,Y\in\mathrm{Age}(E)$. Then 
    \begin{enumerate}
        \item $\gamma_E^a(X,Y) \leq \gamma_E(X,Y) \leq D_E(X,Y)$ and $\gamma_E^a(X,Y) \leq D_E^a(X,Y) \leq D_E(X,Y)$.
        \item If $E$ is almost \textbf{UH}, then $\gamma_E^a(X,Y) \leq D_E^a(X,Y)\leq\gamma_E(X,Y) \leq D_E(X,Y)$.
        \item If $E$ is weak \textbf{AF}, then $\gamma_E^a(X,Y) = D_E^a(X,Y)\leq\gamma_E(X,Y) \leq D_E(X,Y)$, and particular $\gamma_E^a$ is a pseudometric on ${\rm Age}(E)$.
        \item If $E$ is \textbf{AUH}, then $\gamma_E^a(X,Y)  \leq D_E^a(X,Y)\leq \gamma_E(X,Y) = D_E(X,Y)$, and in particular $\gamma_E$ is a pseudometric on ${\rm Age}(E)$.
        \item If $E$ is weak Fra\"iss\'e, then the four maps $\gamma_E^a,\gamma_E, D_E^a, D_E$ coincide on $\mathrm{Age}(E)^2$.
    \end{enumerate}
    \end{proposition}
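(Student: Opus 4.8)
The plan is to establish the five chains of inequalities in Proposition~\ref{relationships} one at a time, building on the elementary inequalities already recorded ($\gamma_E^a\le\gamma_E\le D_E$, $\gamma_E^a\le D_E^a\le D_E$, and the earlier fact that $\gamma_E=D_E$ when $E$ is \textbf{AUH}). Item (1) is nothing new; item (5) follows from (3) and (4) once those are in place (weak Fra\"iss\'e implies both weak \textbf{AF} and \textbf{AUH}, and then all four functions are squeezed between $\gamma_E^a$ and $D_E$ which now coincide). So the real content is in proving the two middle inequalities: $D_E^a\le\gamma_E$ assuming almost \textbf{UH} (this gives (2), and is also the $\le$ half of (3) and used in (4)), and $\gamma_E^a\ge D_E^a$ assuming weak \textbf{AF} (the $\ge$ half of (3)).

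For $D_E^a(X,Y)\le\gamma_E(X,Y)$ under almost \textbf{UH}: first I would fix $\varepsilon>0$ and choose $X_0\equiv X$, $Y_0\equiv Y$ with $d_H(B_{X_0},B_{Y_0})<\gamma_E(X,Y)+\varepsilon$, i.e.\ witnesses for the infimum defining $\gamma_E$. The isometries $X\to X_0$ and $Y\to Y_0$ lie in $\mathrm{Emb}(X,E)$ and $\mathrm{Emb}(Y,E)$, so by almost \textbf{UH} (used twice, with parameter $\delta$) there are $T,U\in\mathrm{Isom}_\delta(E)$ with $T|_X$ mapping $X$ onto $X_0$ and $U|_Y$ mapping $Y$ onto $Y_0$. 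Then $T(B_X)$ is within Hausdorff distance $\delta$ (in the sense used in Lemma~\ref{alternative definitions of gamma_E^a}, since $T$ restricted is a $\delta$-isometry onto $X_0$) of $B_{X_0}$, and similarly for $U(B_Y)$ and $B_{Y_0}$; hence $d_H(T(B_X),U(B_Y))\le 2\delta+d_H(B_{X_0},B_{Y_0})<2\delta+\gamma_E(X,Y)+\varepsilon$, so $D_\delta(X,Y)\le 2\delta+\gamma_E(X,Y)+\varepsilon$. Letting $\delta\to0$ and then $\varepsilon\to0$ gives $D_E^a(X,Y)\le\gamma_E(X,Y)$. For (2) this completes the chain; for (4) one additionally invokes $\gamma_E=D_E$ (\textbf{AUH} case, Proposition~\ref{age(E) is complete when E is auh}).

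For $\gamma_E^a(X,Y)\ge D_E^a(X,Y)$ under weak \textbf{AF} (the harder direction, which I expect to be the main obstacle): I want to show $d_\delta(X,Y)\ge D_{\delta'}(X,Y)$ for a suitable $\delta'=\delta'(\delta,X,Y)\to0$ as $\delta\to0$. Fix $X,Y\in\mathrm{Age}(E)$ and $\delta>0$, and pick near-optimal $t\in\mathrm{Emb}_\delta(X,E)$, $t'\in\mathrm{Emb}_\delta(Y,E)$ realizing $d_\delta(X,Y)$ up to $\varepsilon$. The point is that $t$ is a $\delta$-isometry onto its image $tX\in\mathrm{Age}(E)$; since the identity $X\to tX$ is a $\delta$-embedding, and so is the ``identity'' $tX\to X$ (the inverse $t^{-1}$ is a $\delta$-isometry), weak \textbf{AF} applied to the finite-dimensional space $tX$ (with its own $\delta$, yielding $\varepsilon$-isometry parameter depending on $X$) extends $t^{-1}\colon tX\to X\subset E$ to some $S\in\mathrm{Isom}_{\eta}(E)$ with $\eta=\eta(\delta,tX)\to0$ as $\delta\to0$; similarly one gets $S'\in\mathrm{Isom}_{\eta'}(E)$ related to $t'$. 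Then $S\circ t$ maps $B_X$ close to $B_X$ (it is an $\eta(1+\delta)$-ish perturbation of the inclusion), so $d_H(S(t(B_X)),B_X)$ is small, and likewise $d_H(S'(t'(B_Y)),B_Y)$; since $S,S'\in\mathrm{Isom}_\eta(E)$ we get $D_{\max(\eta,\eta')}(X,Y)\le d_H(S(tB_X),S'(t'B_Y))\le d_H(tB_X,t'B_Y)\cdot(1+\eta)(1+\eta')+(\text{small})$, bounded by something tending to $d_\delta(X,Y)$. Taking $\delta\to0$ forces $\eta,\eta'\to0$ (here is where the subtlety lies: the weak \textbf{AF} parameter $\delta_0$ needed to guarantee $\eta$-smallness depends on $X$, so one must run the argument by first fixing the target accuracy $\eta$, extracting the corresponding $\delta_0$ from weak \textbf{AF} for the space $X$, then only considering $\delta<\delta_0$ — but since we take $\delta\to0$ at the end this is harmless), yielding $\sup_\delta d_\delta(X,Y)\ge\sup_{\eta}D_\eta(X,Y)$, i.e.\ $\gamma_E^a(X,Y)\ge D_E^a(X,Y)$. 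Combined with $\gamma_E^a\le D_E^a$ from (1), this gives equality, and the pseudometric claim in (3) follows since $D_E^a$ is already known to be a pseudometric. Finally, (5) is assembled: a weak Fra\"iss\'e space is weak \textbf{AF} (Proposition~\ref{Fraisse implica AF}) hence $\gamma_E^a=D_E^a$, and is \textbf{AUH} hence $\gamma_E=D_E$; it remains to check $D_E^a=D_E$ (equivalently $\gamma_E=\gamma_E^a$) in this case, which I would get from the weak Fra\"iss\'e property by a similar extension argument producing genuine isometries rather than $\varepsilon$-isometries, squeezing everything between the now-equal outer terms.
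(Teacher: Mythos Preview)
Your overall plan is sound and items (1), (4), (5) are handled essentially as in the paper. In (2) you introduce a spurious $2\delta$ slack: since almost \textbf{UH} produces $T\in\mathrm{Isom}_\delta(E)$ whose restriction to $X$ is the \emph{isometry} onto $X_0$, you have $T(B_X)=B_{X_0}$ exactly, not merely within $\delta$. This is harmless for the limit, but it signals that you are not using the extension property sharply, and the same slip recurs more seriously in (3).

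The genuine problem is in (3). You apply weak \textbf{AF} to the space $tX$ in order to extend $t^{-1}\colon tX\to E$, and then try to manage the parameter $\eta=\eta(\delta,tX)$. But $tX$ varies with $\delta$ and with the choice of near-optimal $t$, so the weak \textbf{AF} modulus really depends on a moving target; your parenthetical reassurance that ``$\delta_0$ depends on $X$'' is only valid if weak \textbf{AF} is applied to $X$, not to $tX$. Worse, the displayed chain
\[
D_{\max(\eta,\eta')}(X,Y)\le d_H\bigl(S(tB_X),S'(t'B_Y)\bigr)\le d_H(tB_X,t'B_Y)(1+\eta)(1+\eta')+\cdots
\]
is broken: since $S|_{tX}=t^{-1}$, one has $S(tB_X)=B_X$ \emph{exactly}, so the middle term equals $d_H(B_X,B_Y)$, which has no useful relation to $d_H(tB_X,t'B_Y)$; the first inequality is then trivially true (take $T=U=\mathrm{Id}$) but useless. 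A repair does exist---use $S^{-1},(S')^{-1}\in\mathrm{Isom}_\eta(E)$, which satisfy $S^{-1}|_X=t$ and hence $S^{-1}(B_X)=t(B_X)$, giving $D_\eta(X,Y)\le d_H(t(B_X),t'(B_Y))$ directly---but this is a roundabout way of arriving at the paper's argument.

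The paper simply applies weak \textbf{AF} (in the form (6) of Proposition~\ref{equivalent definitions of weak Fraisse}) to the \emph{fixed} spaces $X$ and $Y$: given $\varepsilon>0$, choose $\delta>0$ so that any $t\in\mathrm{Emb}_\delta(X,E)$ extends to $T\in\mathrm{Isom}_\varepsilon(E)$ with $T|_X=t$, and similarly $s\in\mathrm{Emb}_\delta(Y,E)$ extends to $S\in\mathrm{Isom}_\varepsilon(E)$. Then $T(B_X)=t(B_X)$ and $S(B_Y)=s(B_Y)$ exactly, so $D_\varepsilon(X,Y)\le d_H(t(B_X),s(B_Y))\le\gamma_E^a(X,Y)$ in one line, with no circularity. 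The point is to extend $t$, not $t^{-1}$.
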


 \begin{proof} (1) is obvious and (4) was observed in \cite{ferenczi et all}. 
 
 (2) if $t,t'$ are isometric embeddings of $X$ and $Y$ into $E$, and $\delta>0$ is given, let $T,T'\in{\rm Isom}_\delta(E)$ be extensions of $t$ and $t'$ respectively. Then
 $D_\delta(X,Y) \leq d_H(TB_X,T'B_Y)=d_H(tB_X,t'B_Y)$.
 Taking the supremum over $\delta$ and the infimum over $t,t'$ gives that $D_E^a(X,Y) \leq \gamma_E(X,Y)$.
 
(3) If $\varepsilon>0$ is given, let $\delta>0$ be the corresponding number of definition of weak \textbf{AF} to $X$ and $Y$. From its definition there are  $t\in\mathrm{Emb}_\delta(X,E)$ and $s\in\mathrm{Emb}_\delta(Y,E)$ such that 
$d_H(t(B_X),s(B_Y))\leq \gamma_E^a(X,Y)$. Since $E$ is weak \textbf{AF}, there are $T,S\in\mathrm{Isom}_\varepsilon(E)$ which extend $t$ and $s$, respectively. Thus $D_\varepsilon(X,Y)\leq \gamma_E^a(X,Y)$
and the arbitrariness of $\varepsilon>0$ yields $D_E^a(X,Y)\leq \gamma_E^a(X,Y)$. 

(5) Because of (2), it is enough to prove that $D_E \leq \gamma_E^a$.
Let $\varepsilon>0$ and $\delta>0$ be associated number by the weak Fra\"iss\'e property in $X$ and $Y$. If $t\in\mathrm{Emb}_\delta(X,E)$ and $s\in\mathrm{Emb}_\delta(X,E)$ are given, let $T,S \in {\rm Isom}(E)$ be such that $\|T|_X-t\| \leq \varepsilon$ and $\|S|_Y-s\| \leq \varepsilon$. Then
\begin{align*}
        d_H(T(B_X),S(B_Y))
        &\leq d_H(T(B_X),t(B_X))+d_H(S(B_Y),s(B_Y))+d_H(t(B_X),s(B_Y))\\
        &\leq 2\varepsilon+d_H(t(B_X),s(B_Y)).
\end{align*}
Thus, $D_E(X,Y)\leq2\varepsilon+d_\delta(X,Y)\leq2\varepsilon+\gamma_E^a(X,Y)$ . By taking $\varepsilon\to0^+$ we get the result. 
 \end{proof}
 
 We now turn to characterizations of almost Fra\"iss\'e spaces among weak almost Fra\"iss\'e spaces. As we shall see, the situation is more involved than for Fra\"iss\'e spaces, which by \cite[Theorem 2.12]{ferenczi et all} are exactly the weak Fra\"iss\'e spaces for which $\mathrm{Age}_k(E)$ is $\mathrm{BM}$-compact for all $k$.
 
 We consider another natural pseudometric, which is only defined on ${\rm Age}(E)$. 
 
 \begin{definition}
 Let $E$ be a Banach space and $X,Y \in {\rm Age}(E)$. We let 
\begin{gather*}
    D_{{\rm BM}}^E(X,Y)= \inf \{\log(\|T\| \|T^{-1}\|)\,\colon\, \mbox{$T\colon E\to E$ is an isomorphism and $T(X)=Y$}\}.
\end{gather*}
 \end{definition}

 We have the following fact:
  
 \begin{lemma} Let $E$ be a Banach space.
 \begin{enumerate}
 \item $D_{{\rm BM}}^E$ is pseudometric on ${\rm Age(E)}$ dominating $d_{{\rm BM}}$.
 \item There is a constant $c(\delta,k)>0$ such that for any $X,Y \in {\rm Age}_k(E)$, we have
 $D_E^a(X,Y) \leq \delta <1/2k^2\Rightarrow D_{{\rm BM}}^E(X,Y) \leq c(\delta,k)$.
 \item For any $X,Y \in {\rm Age}(E)$, 
 $D_E^a(X,Y)=0 \Leftrightarrow D_{{\rm BM}}^E(X,Y)=0$.
 \item If $E$ is weak \textbf{AF}, then
 ${\rm Id}$ is an homeomorphism between
 $({\rm Age}(E),d_{{\rm BM}})$ and $({\rm Age}(E),D_{{\rm BM}}^E)$.
 \item If $E$ is \textbf{AF}, then
 ${\rm Id}$ is a uniform homeomorphism between
 $({\rm Age}(E),d_{{\rm BM}})$ and $({\rm Age}(E),D_{{\rm BM}}^E)$.
 \end{enumerate}
 \end{lemma}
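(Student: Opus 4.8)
The plan is to handle the five items roughly in order. Item (1) is a routine check: $\|T\|\,\|T^{-1}\|\geq1$ gives non-negativity, $T=\mathrm{Id}$ gives $D_{\mathrm{BM}}^E(X,X)=0$, replacing $T$ by $T^{-1}$ gives symmetry, and composing isomorphisms (if $T(X)=Y$ and $S(Y)=Z$ then $ST(X)=Z$ with $\|ST\|\,\|(ST)^{-1}\|\leq(\|S\|\,\|S^{-1}\|)(\|T\|\,\|T^{-1}\|)$) gives the triangle inequality after taking logarithms and infima. Domination of $d_{\mathrm{BM}}$ follows by restriction, since $T|_X$ is an isomorphism $X\to Y$ with $\|T|_X\|\leq\|T\|$ and $\|(T|_X)^{-1}\|\leq\|T^{-1}\|$.

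Item (2) is the technical heart. Starting from $D_E^a(X,Y)\leq\delta$, the definition of $D_E^a=\sup_{\delta'>0}D_{\delta'}$ provides, for every small auxiliary $\delta',\eta>0$, operators $T,U\in\mathrm{Isom}_{\delta'}(E)$ with $d_H(T(B_X),U(B_Y))<\delta+\eta$. Passing from balls of images to balls of image spaces (using $d_H(T(B_X),B_{TX})\leq\delta'$, the elementary estimate already used in the proof of Lemma \ref{alternative definitions of gamma_E^a}), one gets $d_H(B_{TX},B_{UY})\leq d$ with $d:=2\delta'+\delta+\eta$ and $TX,UY\in\mathrm{Age}_k(E)$. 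Shrinking $\delta',\eta$ so that $d<1/2k$ (possible since $\delta<1/2k^2\leq1/2k$), Lemma \ref{continuity of BM respect d_E}(1) yields an isomorphism $\lambda\colon TX\to UY$ which is a $kd$-perturbation of the identity; since $\delta<1/2k^2$ one has $k^2d<1$ for $d$ near $\delta$, so Lemma \ref{continuity of BM respect d_E}(2) extends $\lambda$ to some $S\in\mathrm{Isom}_\sigma(E)$ with $\sigma$ of order $k^2d$. Then $U^{-1}ST\colon E\to E$ is an isomorphism carrying $X$ onto $Y$ with $\|U^{-1}ST\|\,\|(U^{-1}ST)^{-1}\|\leq\bigl((1+\delta')^2(1+\sigma)\bigr)^2$, so that $D_{\mathrm{BM}}^E(X,Y)\leq2\log\bigl((1+\delta')^2(1+\sigma)\bigr)$; letting $\delta',\eta\to0$ (so $d\to\delta$ and $\sigma\to2k^2\delta/(1-k^2\delta)$) gives $D_{\mathrm{BM}}^E(X,Y)\leq c(\delta,k)$ with, e.g., $c(\delta,k)=2\log\tfrac{1+k^2\delta}{1-k^2\delta}$, and in particular $c(\delta,k)\to0$ as $\delta\to0^+$, a fact we record for item (3). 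The two successive applications of the perturbation lemmas are what introduce the factor $k^2$ and force the hypothesis $\delta<1/2k^2$ rather than $\delta<1/2k$; this bookkeeping is the main obstacle of the whole statement.

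For item (3): if $D_{\mathrm{BM}}^E(X,Y)=0$, then for each $\delta'>0$ pick an isomorphism $T\colon E\to E$ with $T(X)=Y$ and $\log(\|T\|\,\|T^{-1}\|)$ so small that, after rescaling $T$ by a scalar to make $\|T\|=\|T^{-1}\|$, we have $T\in\mathrm{Isom}_{\delta'}(E)$ and still $T(X)=Y$; then, taking $U=\mathrm{Id}$ in the definition, $D_{\delta'}(X,Y)\leq d_H(T(B_X),B_Y)\leq\delta'$. Since $D_{\delta'}$ is non-increasing in $\delta'$ and dominated by $\delta'$ for every $\delta'$, it is identically zero, whence $D_E^a(X,Y)=0$. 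Conversely, if $D_E^a(X,Y)=0$ (which forces $\dim X=\dim Y=:k$, since the defining infima bring the balls $B_{TX},B_{UY}$ arbitrarily close and balls of subspaces of distinct dimension cannot), then item (2) gives $D_{\mathrm{BM}}^E(X,Y)\leq c(\delta,k)$ for every $\delta\in(0,1/2k^2)$, so letting $\delta\to0^+$ yields $D_{\mathrm{BM}}^E(X,Y)=0$.

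Finally, items (4) and (5). In both, $\mathrm{Id}\colon(\mathrm{Age}(E),D_{\mathrm{BM}}^E)\to(\mathrm{Age}(E),d_{\mathrm{BM}})$ is $1$-Lipschitz by item (1), so only the reverse map needs the homogeneity hypothesis. Given $X\in\mathrm{Age}(E)$ of dimension $k$ and $\varepsilon>0$, choose $\varepsilon'$ with $2\log(1+\varepsilon')<\varepsilon$ and let $\delta>0$ be the constant from the weak \textbf{AF} property (Proposition \ref{equivalent definitions of weak Fraisse}(6)) applied to $X$ and $\varepsilon'$ in case (4), respectively from the \textbf{AF} property (Proposition \ref{equivalent definitions of Fraisse}(4)) applied to $k$ and $\varepsilon'$ in case (5) — the point being that in the \textbf{AF} case $\delta$ depends only on $k$ and $\varepsilon'$, not on $X$ within $\mathrm{Age}_k(E)$. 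If $Y\in\mathrm{Age}(E)$ with $d_{\mathrm{BM}}(X,Y)<2\log(1+\delta)$, take an isomorphism $\lambda\colon X\to Y$ realizing this Banach--Mazur distance and rescale it to $\mu=c\lambda$ with $\|\mu\|=\|\mu^{-1}\|=\sqrt{\|\lambda\|\,\|\lambda^{-1}\|}<1+\delta$; then $\mu\in\mathrm{Emb}_\delta(X,E)$ with $\mu(X)=Y$, so the chosen property produces $T\in\mathrm{Isom}_{\varepsilon'}(E)$ with $T|_X=\mu$, whence $T(X)=Y$ and $D_{\mathrm{BM}}^E(X,Y)\leq2\log(1+\varepsilon')<\varepsilon$. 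In the weak \textbf{AF} case the modulus depends on $X$, giving continuity of $\mathrm{Id}\colon(\mathrm{Age}(E),d_{\mathrm{BM}})\to(\mathrm{Age}(E),D_{\mathrm{BM}}^E)$, hence a homeomorphism; in the \textbf{AF} case the modulus depends only on $(k,\varepsilon)$, so $\mathrm{Id}$ is uniformly continuous on each $\mathrm{Age}_k(E)$, and since subspaces of different dimension are at infinite distance in both metrics this yields a uniform homeomorphism on $\mathrm{Age}(E)$.
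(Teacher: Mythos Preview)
Your proof is correct and takes essentially the same route as the paper's: item (2) via the same perturbation-and-extension argument through Lemma~\ref{continuity of BM respect d_E}(1)--(2) (the paper simply takes the auxiliary parameter equal to $\delta$ rather than sending it to zero, arriving at the variant constant $2\log(1+\delta)+\log\frac{1+k^2\delta}{1-k^2\delta}$), item (3) from (2), and items (1), (4), (5) directly from the definitions---you in fact supply considerably more detail than the paper, which dismisses (4)--(5) as ``obvious'' and only spells out the implication $D_E^a=0\Rightarrow D_{\mathrm{BM}}^E=0$ in (3), whereas you also write out the converse. One small wrinkle: your closing clause in (5), that uniform continuity on each $\mathrm{Age}_k(E)$ together with infinite inter-dimensional distances yields a uniform homeomorphism on all of $\mathrm{Age}(E)$, is not literally valid since the modulus still depends on $k$; but this imprecision is already present in the paper's own statement and proof.
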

 
 \begin{proof} (1) follows from definition.
(2) By Lemma \ref{alternative definitions of gamma_E^a} there are $U,V\in\mathrm{Isom}_\delta(E)$ satisfying $d_H(B_{UX},B_{VY})\leq\delta$. If $X'=UX$ and $Y'=VY$, then by Lemma \ref{continuity of BM respect d_E}(1) there is $\lambda\colon X'\to Y'$ which is a $k\delta$-perturbation of $\mathrm{Id}$. Once again by Lemma \ref{continuity of BM respect d_E}(2), there exists $\tilde\lambda\in\mathrm{Isom}_{\sigma(k,\delta)}(E)$ such that
$\tilde\lambda|_X=\lambda$, where $\sigma(k,\delta)=2k^2\delta/(1-k^2\delta)$. If $T=V^{-1}\tilde \lambda U$, then $T\colon E\to E$ is an isomorphism with $TX=Y$ and 
\begin{gather*}
D_{\mathrm{BM}}^E(X,Y)\leq\log(\|T\|\|T^{-1}\|)\leq2\log(1+\delta)+\log\left(\dfrac{1+k^2\delta}{1-k^2\delta}\right):=c(\delta,k).
\end{gather*}
 (3) follows from (2) since $\displaystyle\lim_{\delta\to0}c(\delta,k)=0$. (4) and (5) are obvious implications of the definition.
 \end{proof}

 \begin{question}
     When does $D_{\rm BM}^E$ be a complete pseudometric?
 \end{question}
 
Note that if $D_{{\rm BM}}^E$ is complete and $E$ is \textbf{AF}, then ${\rm Age}(E)$ is ${\rm BM}$-compact, and probably we also have that
 \textbf{AF} is equivalent to weak \textbf{AF} plus $\mathrm{Age}(E)$ ${\rm BM}$-compact.
 
 Recall that  for $X,Y\in\overline{\mathrm{Age}_k(E)}^{\mathrm{BM}}$ we have $\gamma_E^a(X,Y)=0$ if and only if $X \equiv Y$ (Lemma \ref{continuity of BM respect d_E}), and
 if $E$ is  weak \textbf{AF}, then we have  that  $\gamma_E^a=\sup_{\delta>0}d_\delta$ is a pseudometric on $\mathrm{Age}(E)$ coinciding with
 $D_E^a=\sup_{\delta>0}D_\delta$ (Item 3 in Proposition \ref{relationships}). 
 
Note that although $\gamma_E^a$ is defined on $\overline{\mathrm{Age}_k(E)}^{\mathrm{BM}}$, it is not clear whether it is a pseudometric there. We start with a lemma which implies that when $E$ is \textbf{AF},
then $\gamma_E^a$ is indeed a complete pseudometric on 
$\overline{\mathrm{Age}_k(E)}^{\mathrm{BM}}$.

We denote by $(C_k(E),\overline{\gamma_E^a})$ the $\gamma_E^a$-completion of ${\rm Age}_k(E)$, and by $j$ the map $ C_k(E) \rightarrow \overline{{\rm Age}_k(E)}^{\mathrm{BM}}$, defined by
     $j(\overline{(X_n)_n})=\mathrm{BM}-\lim_n X_n$.
Note that $j$ is well defined by uniform continuity of the map ${\rm Id}$ on ${\rm Age}(E)$ with respect to the
$\gamma_E^a$ and $\mathrm{BM}$ pseudometrics.

 \begin{lemma}\label{impli} Let $E$ be a Banach space. Then we have the relation, for $X,Y \in C_k(E)$,
 $$\gamma_E^a(jX,jY) \leq \overline{\gamma_E^a}(X,Y).$$
 Furthermore, consider the properties:
 \begin{enumerate}
     \item $E$ is \textbf{AF}.
     \item $\gamma_E^a$ is a pseudometric on $\overline{\mathrm{Age}_k(E)}^{\mathrm{BM}}$.
     \item 
     The  map 
     $j: C_k(E) \rightarrow \overline{{\rm Age}_k(E)}^{\mathrm{BM}}$ is surjective and satisfies $\gamma_E^a(jX,jY) = \overline{\gamma_E^a}(X,Y)$
     \item $\mathrm{Id}: (\overline{\mathrm{Age}_k(E)}^{\mathrm{BM}},\gamma_E^a) \rightarrow (\overline{\mathrm{Age}_k(E)}^{\mathrm{BM}},\mathrm{BM})$ is uniformly continuous for each $k$.
     
 \end{enumerate}
 Then $(1) \Rightarrow (2) \Leftrightarrow (3) \Rightarrow (4)$.
 
 In particular,  if $E$ is \textbf{AF}, then $\gamma_E^a$ is a complete pseudometric on 
$\overline{\mathrm{Age}_k(E)}^{\mathrm{BM}}$.
 \end{lemma}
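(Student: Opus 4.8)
The plan is to establish the four assertions in the stated order, building each on the previous ones. First I would prove the basic inequality $\gamma_E^a(jX,jY) \leq \overline{\gamma_E^a}(X,Y)$ for $X,Y \in C_k(E)$: writing $X = \overline{(X_n)_n}$ and $Y = \overline{(Y_n)_n}$ with $X_n, Y_n \in \mathrm{Age}_k(E)$, one has $jX = \mathrm{BM}\text{-}\lim_n X_n$ and $jY = \mathrm{BM}\text{-}\lim_n Y_n$, and by the $\mathrm{BM}$-lower semicontinuity of $\gamma_E^a$ on $\overline{\mathrm{Age}_k(E)}^{\mathrm{BM}}$ (Lemma \ref{continuity of BM respect d_E}(4)) together with the fact that $\gamma_E^a(X_n,Y_n) \to \overline{\gamma_E^a}(X,Y)$ (since $(X_n)$, $(Y_n)$ are Cauchy representatives), we get $\gamma_E^a(jX,jY) \leq \liminf_n \gamma_E^a(X_n,Y_n) = \overline{\gamma_E^a}(X,Y)$.

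Next I would handle $(1)\Rightarrow(2)$. Assume $E$ is \textbf{AF}. The point is that on $\overline{\mathrm{Age}_k(E)}^{\mathrm{BM}}$ the triangle inequality for $\gamma_E^a$ should follow from an approximation argument: given $X,Y,Z$ in the closure and $\varepsilon>0$, use the \textbf{AF} property (in the form of condition (1) or (2) of Proposition \ref{equivalent definitions of Fraisse}, which is uniform in the dimension) to produce, for small $\delta$, $\delta$-isometric embeddings $t$ of $Y$ and controlled surjective $\varepsilon$-isometries of $E$ allowing one to ``splice'' an approximate realization of $X$ near one of $Y$ and one of $Y$ near one of $Z$; the $\varepsilon$-isometry distorts Hausdorff distances by a factor $(1+\varepsilon)$, so passing $\delta,\varepsilon \to 0$ gives $\gamma_E^a(X,Z) \leq \gamma_E^a(X,Y) + \gamma_E^a(Y,Z)$. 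Symmetry and $\gamma_E^a(X,X)=0$ are clear from the definition, so $\gamma_E^a$ is a pseudometric.

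Then $(2)\Leftrightarrow(3)$: if $\gamma_E^a$ is a pseudometric on $\overline{\mathrm{Age}_k(E)}^{\mathrm{BM}}$, then since $\mathrm{Age}_k(E)$ is $\gamma_E^a$-dense in $\overline{\mathrm{Age}_k(E)}^{\mathrm{BM}}$ (every $X$ in the $\mathrm{BM}$-closure is a $\mathrm{BM}$-limit, hence by the lower-semicontinuity and the alternative definitions $d_\delta$ a $\gamma_E^a$-limit, of elements of $\mathrm{Age}_k(E)$) and $\overline{\mathrm{Age}_k(E)}^{\mathrm{BM}}$ is $\gamma_E^a$-complete (because $\gamma_E^a$-Cauchy sequences are $\mathrm{BM}$-Cauchy by Lemma \ref{continuity of BM respect d_E}(3) and $\mathrm{BM}$-limits are $\gamma_E^a$-limits), the space $(\overline{\mathrm{Age}_k(E)}^{\mathrm{BM}},\gamma_E^a)$ is exactly the $\gamma_E^a$-completion of $\mathrm{Age}_k(E)$; this forces $j$ to be a surjective isometry onto it, i.e. $\gamma_E^a(jX,jY) = \overline{\gamma_E^a}(X,Y)$. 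Conversely, if $j$ is surjective and isometric, then $\gamma_E^a$ on $\overline{\mathrm{Age}_k(E)}^{\mathrm{BM}}$ is the pullback under $j^{-1}$ of the metric $\overline{\gamma_E^a}$, hence a pseudometric. The implication $(3)\Rightarrow(4)$ follows since, granted (3), $\mathrm{Id}$ on $\overline{\mathrm{Age}_k(E)}^{\mathrm{BM}}$ is conjugate via $j$ to $\mathrm{Id}: (C_k(E),\overline{\gamma_E^a}) \to (\overline{\mathrm{Age}_k(E)}^{\mathrm{BM}},\mathrm{BM})$, and the latter is uniformly continuous by Lemma \ref{continuity of BM respect d_E}(3), which gives $d_{\mathrm{BM}} \leq 4k\,\gamma_E^a$ on the relevant range. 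Finally the ``in particular'' clause: if $E$ is \textbf{AF} then by $(1)\Rightarrow(2)\Rightarrow(3)$, $(\overline{\mathrm{Age}_k(E)}^{\mathrm{BM}},\gamma_E^a)$ is isometric to the completion $(C_k(E),\overline{\gamma_E^a})$, hence complete. I expect the main obstacle to be $(1)\Rightarrow(2)$: making the splicing argument for the triangle inequality fully rigorous requires carefully tracking how the $\varepsilon$-isometries of $E$ and the $\delta$-embeddings of subspaces in $\overline{\mathrm{Age}_k(E)}^{\mathrm{BM}}$ interact, and choosing the order in which $\delta$ and $\varepsilon$ are sent to $0$ so that all error terms collapse — this is where the uniformity-in-dimension built into the \textbf{AF} (as opposed to weak \textbf{AF}) property is essential.
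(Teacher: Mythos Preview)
Your proposal is correct and follows essentially the same approach as the paper's proof: lower semicontinuity for the initial inequality, the \textbf{AF}-based ``splicing'' argument for the triangle inequality in $(1)\Rightarrow(2)$, and the combination of the triangle inequality with lower semicontinuity to identify $(\overline{\mathrm{Age}_k(E)}^{\mathrm{BM}},\gamma_E^a)$ with the completion $C_k(E)$ for $(2)\Leftrightarrow(3)$. The only cosmetic difference is that the paper derives $(4)$ directly from $(2)$ via Lemma~\ref{continuity of BM respect d_E}(3), whereas you route it through $(3)$.
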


 \begin{proof} 
 Write $X=(X_n)_n$, $Y=(Y_n)_n$, where $X_n, Y_n$ are $\gamma_E^a$-Cauchy sequences. Then $X_n$ and $Y_n$ tend to $jX$ and $jY$ respectively with respect to $d_{{\rm BM}}$. By Lemma \ref{continuity of BM respect d_E}(3), it follows that
 $\gamma_E^a(jX,jY) \leq \liminf \gamma_E^a(X_n,Y_n)=\lim_n \gamma_E^a(X_n,Y_n)=\overline{\gamma_E^a}(X,Y)$.
 
 Also, if $(2)$ holds, i.e. $\gamma_E^a$ is a complete pseudometric on 
$\overline{\mathrm{Age}_k(E)}^{\mathrm{BM}}$, and admitting for now $(2) \Rightarrow (3)$, then
  $j$ is a surjective isometry between 
 $(C_k(E),\overline{\gamma_E^a})$ and 
 $(\overline{{\rm Age}_k(E)}^{\mathrm{BM}},\gamma_E^a)$, so 
 $\gamma_E^a$ is necessarily a complete pseudometric on 
$\overline{\mathrm{Age}_k(E)}^{\mathrm{BM}}$, proving the last affirmation of the lemma.

 $(2) \Rightarrow (4)$ is an immediate consequence of Lemma \ref{continuity of BM respect d_E}(3). $(3) \Rightarrow (2)$ is also clear, since if (3) holds then the $\gamma_E^a$-completion $(C_k(E),\overline{\gamma_E^a})$ of ${\rm Age}_k(E)$ coincides with $(\overline{{\rm Age}_k(E)}^{\mathrm{BM}},\gamma_E^a)$, through the map $j$.

$(1) \Rightarrow (2)$: 
 We prove the triangular inequality.
Let $X,Y,Z\in\overline{\mathrm{Age}_k(E)}^{\mathrm{BM}}$ and $r>0$ be fixed. If $0<\varepsilon<r$ is given, let $\delta>0$ be the corresponding value in the definition of \textbf{AF}. 
Also, fix $\delta'>0$ such that $(1+\delta')(1+\varepsilon)<1+r$ and $0<\delta'<\delta$. Then
\begin{gather*}
    d_{\delta'}(X,Y)\leq \gamma_E^a(X,Y)\quad\mbox{and}\quad d_{\delta'}(Y,Z)\leq \gamma_E^a(Y,Z).
\end{gather*}
From definition there are $u\in\mathrm{Emb}_{\delta'}(X,E)$, $v\in\mathrm{Emb}_{\delta'}(Y,E)$, $t\in\mathrm{Emb}_{\delta'}(Y,E)$ and $s\in\mathrm{Emb}_{\delta'}(Z,E)$ such that 
\begin{gather}\label{inequa 1}
    d_H(u(B_{X}),v(B_{Y}))\leq \gamma_E^a(X,Y)\quad\mbox{and}\quad d_H(t(B_{Y}),s(B_{Z}))\leq \gamma_E^a(Y,Z).
\end{gather}
Since $E$ is \textbf{AF}, by (2) of Theorem \ref{equivalent definitions of Fraisse} there is $T\in\mathrm{Isom}_\varepsilon(E)$ with $v=T\circ t$. So,
\begin{gather}\label{inequa 2}
    d_H(Tt(B_{Y}),Ts(B_{Z}))\leq(1+\varepsilon)d_H(t(B_{Y}),s(B_{Z})).
\end{gather}
By combining \eqref{inequa 1} and \eqref{inequa 2} we obtain
\begin{gather*}
    \frac{1}{1+\varepsilon}d_H(v(B_{Y}),Ts(B_{Z}))=\frac{1}{1+\varepsilon}d_H(Tt(B_{Y}),Ts(B_{Z}))\leq \gamma_E^a(Y,Z).
\end{gather*}
By adding the previous inequality and \eqref{inequa 1} it follows that 
\begin{gather*}
   \frac{1}{1+\varepsilon}d_H(u(B_{X}),Ts(B_{Z}))\leq\frac{1}{1+\varepsilon}\gamma_E^a(X,Y)+\gamma_E^a(Y,Z).
\end{gather*}
Since $T\circ s\in\mathrm{Emb}_{r}(Z,E)$ and $u\in\mathrm{Emb}_{\delta'}(X,E)\subset\mathrm{Emb}_{r}(X,E)$, we have 
\begin{gather*}
    \frac{1}{1+\varepsilon}d_r(X,Z)\leq\frac{1}{1+\varepsilon}\gamma^a_E(X,Y)+\gamma_E^a(Y,Z).
\end{gather*}
Since $r,\varepsilon>0$ were arbitrary, we obtain $\gamma_E^a(X,Z)\leq \gamma_E^a(X,Y)+\gamma_E^a(Y,Z).$

$(2) \Rightarrow (3)$: Let $(X_n)_n$ and $(Y_n)_n$ be $\gamma_E^a$-Cauchy sequences in ${\rm Age}(E)$ and $X=j(\overline{(X_n)_n})$,
$Y=j(\overline{(Y_n)_n})$. By using the triangular inequality and
 the lower semicontinuity of $\gamma_E^a$ (Lemma \ref{continuity of BM respect d_E}) we have
\begin{align*}
    |\gamma_E^a(X,Y)-\gamma_E^a(X_n,Y_n)| 
   &\leq \gamma_E^a(X_n,X) + \gamma_E^a(X_n,X) \\
&\leq \liminf_k \gamma_E^a(X_n,X_k)+\liminf_k \gamma_E^a(X_n,X_k).
\end{align*}
 Since $(X_n)_n$ and $(Y_n)_n$ are $\gamma_E^a$-Cauchy, the last inequality  implies that
$$\gamma_E^a(X,Y)=\lim_n \gamma_E^a(X_n,Y_n)=\overline{\gamma}_E^a(\overline{(X_n)_n},\overline{(Y_n)_n}).$$
From the completeness of $(C_k(E),\overline{\gamma_E^a})$ it follows that  $j(C_k(E))$ is closed and since it is dense (it contains ${\rm Age}_k(E)$), $j$ is surjective.
 \end{proof}
 
 We finally obtain a list of equivalent sufficient conditions regarding weak \textbf{AF} spaces. It is not clear however how they relate to the \textbf{AF} property.

\begin{proposition}\label{pseudometric proof}
Let $E$ be a weak \textbf{AF} Banach space such that 
$\gamma_E^a$ is a pseudometric on $\overline{\mathrm{Age}(E)}^{\mathrm{BM}}$. The following statements are equivalent:
\begin{enumerate}
    \item ${\rm Id}\colon ({\rm Age}_k(E),\gamma_E^a) \rightarrow ({\rm Age}_k(E),\mathrm{BM}) $ is a uniform homeomorphism for each $k$
    \item 
    The map
    $${\rm Id}\colon (\overline{\mathrm{Age}(E)}^{\mathrm{BM}},\gamma_E^a) \rightarrow (\overline{\mathrm{Age}(E)}^{\mathrm{BM}},\mathrm{BM})$$ is an homeomorphism.
    \item 
    The map
    $${\rm Id}\colon (\overline{\mathrm{Age}_k(E)}^{\mathrm{BM}},\gamma_E^a) \rightarrow (\overline{\mathrm{Age}_k(E)}^{\mathrm{BM}},\mathrm{BM})$$ is a uniform homeomorphism for each $k$.
    \item $\gamma_E^a$ is a compact pseudometric on $\overline{\mathrm{Age}_k(E)}^{\mathrm{BM}}$ for each $k$. 
    \item 
    The set $(\mathrm{Age}_k(E),\gamma_E^a)$ is totally bounded for each $k$.
\end{enumerate}
\end{proposition}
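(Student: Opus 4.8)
The plan is to prove the cycle $(1) \Rightarrow (5) \Rightarrow (4) \Rightarrow (3) \Rightarrow (2) \Rightarrow (1)$, exploiting throughout that $(\overline{\mathrm{Age}_k(E)}^{\mathrm{BM}}, d_{\mathrm{BM}})$ is a \emph{compact} metric space (recalled in the excerpt) and that, by hypothesis, $\gamma_E^a$ is a pseudometric on $\overline{\mathrm{Age}(E)}^{\mathrm{BM}}$ which by Lemma \ref{continuity of BM respect d_E}(3) is actually a metric there satisfying $d_{\mathrm{BM}} \leq 4k\gamma_E^a$ whenever $\gamma_E^a < 1/2k$. So the identity map $\mathrm{Id}\colon (\overline{\mathrm{Age}_k(E)}^{\mathrm{BM}},\gamma_E^a) \to (\overline{\mathrm{Age}_k(E)}^{\mathrm{BM}},\mathrm{BM})$ is automatically \emph{continuous} (indeed locally Lipschitz) in one direction; every statement in the list is really about the reverse direction, i.e.\ about $\gamma_E^a$ being controlled by $d_{\mathrm{BM}}$, which is equivalent to uniform continuity of $\mathrm{Id}^{-1}$, or to $\gamma_E^a$-total boundedness, or to $\gamma_E^a$-compactness of the (BM-compact, hence complete-in-$\gamma_E^a$ by Lemma \ref{impli}?) space.

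\textbf{Key steps.} First I would settle the easy equivalences that come purely from topology. Since $(\overline{\mathrm{Age}_k(E)}^{\mathrm{BM}}, d_{\mathrm{BM}})$ is compact and $\mathrm{Id}$ is BM-continuous from the $\gamma_E^a$-side (Lemma \ref{continuity of BM respect d_E}(3)), a $\gamma_E^a$-open cover argument or the standard fact ``a continuous bijection from a compact space is a homeomorphism'' gives: $\gamma_E^a$ compact on $\overline{\mathrm{Age}_k(E)}^{\mathrm{BM}}$ $\iff$ $\mathrm{Id}\colon(\overline{\mathrm{Age}_k(E)}^{\mathrm{BM}},\gamma_E^a)\to(\cdots,\mathrm{BM})$ is a uniform homeomorphism (compactness upgrades continuous to uniformly continuous) $\iff$ $(\overline{\mathrm{Age}_k(E)}^{\mathrm{BM}},\gamma_E^a)$ is totally bounded and complete. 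This handles $(3)\Leftrightarrow(4)$. For $(4)\Rightarrow(5)$: $\mathrm{Age}_k(E)$ sits $\gamma_E^a$-isometrically inside the $\gamma_E^a$-compact space $\overline{\mathrm{Age}_k(E)}^{\mathrm{BM}}$, hence is totally bounded. For $(5)\Rightarrow(4)$: $\mathrm{Age}_k(E)$ is $d_{\mathrm{BM}}$-dense in $\overline{\mathrm{Age}_k(E)}^{\mathrm{BM}}$, and by Lemma \ref{continuity of BM respect d_E}(4) (BM-lower semicontinuity of $\gamma_E^a$) plus the density, $\gamma_E^a$-total boundedness of $\mathrm{Age}_k(E)$ passes to $\overline{\mathrm{Age}_k(E)}^{\mathrm{BM}}$; combined with the completeness of $\gamma_E^a$ on $\overline{\mathrm{Age}_k(E)}^{\mathrm{BM}}$ --- which I would deduce here, when $E$ is weak \textbf{AF} and $\gamma_E^a$ is a pseudometric, arguing as in Lemma \ref{impli} ($(2)\Rightarrow(3)$) that $j\colon C_k(E)\to\overline{\mathrm{Age}_k(E)}^{\mathrm{BM}}$ is a surjective $\gamma_E^a$-isometry --- we get compactness. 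For $(3)\Leftrightarrow(2)$: statement $(2)$ is the ``diagonal'' assertion over all dimensions, and since $\overline{\mathrm{Age}(E)}^{\mathrm{BM}} = \bigsqcup_k \overline{\mathrm{Age}_k(E)}^{\mathrm{BM}}$ (the dimension of a limit is locally constant, a fact used already in Proposition \ref{age(E) is complete when E is auh}), a homeomorphism on the whole space is exactly a uniform homeomorphism on each piece; one should be slightly careful that $(2)$ only asks for a plain homeomorphism, but on each BM-compact piece continuity forces uniform continuity, so the two coincide. Finally $(1)\Leftrightarrow(5)$: $(1)$ is just $(3)$/$(4)$ restricted to $\mathrm{Age}_k(E)$ rather than its BM-closure; $(1)$ says $\mathrm{Id}^{-1}$ is uniformly continuous on $\mathrm{Age}_k(E)$, which is precisely that $d_{\mathrm{BM}}\to 0$ forces $\gamma_E^a\to 0$ uniformly on $\mathrm{Age}_k(E)$; this gives that the $\gamma_E^a$-uniformity and $\mathrm{BM}$-uniformity agree on the precompact (for BM) space $\mathrm{Age}_k(E)$, hence $\gamma_E^a$-total boundedness; conversely $(5)\Rightarrow(4)\Rightarrow(3)$, and restricting the uniform homeomorphism of $(3)$ to $\mathrm{Age}_k(E)$ gives $(1)$.

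\textbf{Main obstacle.} The delicate point is establishing \emph{completeness} of $\gamma_E^a$ on $\overline{\mathrm{Age}_k(E)}^{\mathrm{BM}}$ under the mere hypotheses of this proposition (weak \textbf{AF} and $\gamma_E^a$ a pseudometric), since Lemma \ref{impli} only provides this when $E$ is \textbf{AF}. I expect to recover it by rerunning the argument of Lemma \ref{impli}, $(2)\Rightarrow(3)$: given a $\gamma_E^a$-Cauchy sequence $(X_n)$ in $\overline{\mathrm{Age}_k(E)}^{\mathrm{BM}}$, the bound $d_{\mathrm{BM}} \leq 4k\gamma_E^a$ (locally) makes it $\mathrm{BM}$-Cauchy, so $X_n \xrightarrow{\mathrm{BM}} X \in \overline{\mathrm{Age}_k(E)}^{\mathrm{BM}}$, and then BM-lower semicontinuity of $\gamma_E^a$ (Lemma \ref{continuity of BM respect d_E}(4)) together with the Cauchy condition forces $\gamma_E^a(X_n,X)\to 0$; the only subtlety is that lower semicontinuity gives $\gamma_E^a(X,X_m) \leq \liminf_n \gamma_E^a(X_n,X_m)$, which for $m$ large is small, exactly as in the computation displayed in the proof of Lemma \ref{impli}. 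Once completeness is in hand the rest is the routine compact-metric-space bookkeeping sketched above, and I would write it compactly as the single implication cycle.
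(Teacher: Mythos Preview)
Your proposal is correct and follows essentially the same cycle as the paper's proof: $(4)\Rightarrow(3)\Rightarrow(2)\Rightarrow(1)\Rightarrow(5)\Rightarrow(4)$, with the same key ingredients (Lemma \ref{continuity of BM respect d_E}(3) for one direction of continuity, BM-compactness to upgrade homeomorphism to uniform homeomorphism, and the map $j$ from Lemma \ref{impli} for $(5)\Rightarrow(4)$). One small simplification: your ``main obstacle'' is not actually an obstacle here, because the hypothesis of the proposition is precisely condition (2) of Lemma \ref{impli}, and the equivalence $(2)\Leftrightarrow(3)$ there does \emph{not} require $E$ to be \textbf{AF}; it gives directly that $j\colon C_k(E)\to(\overline{\mathrm{Age}_k(E)}^{\mathrm{BM}},\gamma_E^a)$ is a surjective isometry, hence $\gamma_E^a$ is complete on the BM-closure --- so you need not re-derive this.
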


\begin{proof} (4) $\Rightarrow$ (3) : if (4) holds then by Lemma \ref{continuity of BM respect d_E} (2), ${\rm Id}$ is a uniformly continuous bijection between the compact spaces $(\overline{\mathrm{Age}_k(E)}^{\mathrm{BM}},\gamma_E^a)$ and
$(\overline{\mathrm{Age}_k(E)}^{\mathrm{BM}},\mathrm{BM})$ and therefore a uniform homeomorphism.

(3) $\Rightarrow$ (2) is obvious. (2) $\Rightarrow$ (1): if (2) is valid, the map ${\rm Id}$ is a homeomorphism between compact spaces, and therefore a uniform homeomorphism, implying (1).

(1) $\Rightarrow$ (5): By (1), the completions of ${\rm Age}_k(E)$ with respect to $\mathrm{BM}$ and $\gamma_E^a$ coincide. In particular the $\gamma_E^a$-completion of $({\rm Age}_k(E),\gamma_E^a)$ is compact, and so ${\rm Age}_k(E)$ is totally bounded for $\gamma_E^a$.

(5) $\Rightarrow$ (4): since $\gamma_E^a$ is a pseudometric on $\overline{\mathrm{Age}_k(E)}^{\mathrm{BM}}$, it follows from (2)$\Leftrightarrow$(3) in  Lemma \ref{impli} that the map
$j: C_k(E) \rightarrow 
(\overline{{\rm Age}_k(E)}^{\mathrm{BM}},\gamma_E^a)$ given by $\overline{(X_n)_n} \mapsto \mathrm{BM}-\lim_n X_n$ is a surjective isometry.   Therefore $(\overline{{\rm Age}_k(E)}^{\mathrm{BM}},\gamma_E^a)$ is compact.
\end{proof}

\begin{question}
Does the \textbf{AF}-property imply (1) to (5) of Proposition \ref{pseudometric proof}?
\end{question}
  
  What we know is that the \textbf{AF}-property implies that $\gamma_E^a$ is a pseudometric on $\overline{{\rm Age}_k(E)}^{{\rm BM}}$ and that this pseudometric is complete ((2) and (3) of Lemma \ref{impli}).

\subsection{Ultrapowers of \textbf{AF} Banach spaces}\label{ultrapowers}
Now we proceed to proving that ultrapowers of \textbf{AF} Banach spaces are \textbf{AF} and \textbf{UH}, and obtaining characterizations of $\textbf{AF}$ for ultrapowers.  The \textbf{UH}-property of ultrapowers of $E$ was proven in \cite{ferenczi et all} under the formally stronger assumption that $E$ is Fra\"iss\'e. Recall that for a Banach $E$ and a non-principal ultrafilter $\mathcal U$ on $\mathbb N$, 
$E_\mathcal U$ denotes the ultrapower $E^{\mathbb N}/\mathcal U$.  For $\varepsilon \geq 0$,
We denote by  $(\mathrm{Isom}^{\varepsilon}(E))_\mathcal{U}$
the set of maps $T$ acting on $E_\mathcal U$ by
$T([(x_n)]_{\mathcal U})=[(T_nx_n)]_{\mathcal U}$ for each $[(x_n)]_{\mathcal U}\in E_{\mathcal U}$, where $(T_n)_n$ is a sequence of elements of
$\mathrm{Isom}_{\varepsilon_n}(E)$ with $\lim_n\varepsilon_n=\varepsilon$, and we note that
$(\mathrm{Isom}^\varepsilon(E))_\mathcal{U}
\subseteq
\mathrm{Isom}_{\varepsilon}(E_\mathcal{U})$.

\begin{lemma}\label{AF for subspaces of ultrapowers}
Let $E$ be an \textbf{AF} Banach space and $\mathcal U$ be a non-principal ultrafilter on $\mathbb N$. Then for each $\varepsilon>0$ and $k\in\mathbb N$, there is $\delta>0$ with the following property:
 if  $X\in\overline{\mathrm{Age}_k(E)}^{\mathrm{BM}}$ and $\phi_1,\phi_2\in\mathrm{Emb}_{\delta}(X,E_{\mathcal U})$,
there exists 
$T \in (\mathrm{Isom}^{\varepsilon}(E))_\mathcal{U}$
such that $T\circ\phi_1=\phi_2$.
\end{lemma}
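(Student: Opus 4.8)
The plan is to reduce the statement about embeddings into the ultrapower $E_{\mathcal U}$ to the \textbf{AF} property of $E$ itself, using the fact that a finite-dimensional subspace of $E_{\mathcal U}$ is, up to arbitrarily small perturbation, "spread out" along a sequence of finite-dimensional subspaces of $E$. First I would fix $\varepsilon>0$ and $k\in\mathbb N$, and let $\delta>0$ be the number provided by condition (2) of Proposition \ref{equivalent definitions of Fraisse} for $E$, applied with $\varepsilon/2$ (or some convenient fraction) and dimension $k$; I will actually use a slightly smaller $\delta'<\delta$ to absorb perturbation losses. The claim is that this same $\delta'$ (possibly shrunk once more) works for $E_{\mathcal U}$.

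Next I would take $X\in\overline{\mathrm{Age}_k(E)}^{\mathrm{BM}}$ and $\phi_1,\phi_2\in\mathrm{Emb}_{\delta'}(X,E_{\mathcal U})$. Choose an Auerbach-type basis $\{x_1,\dots,x_k\}$ of $X$. Each $\phi_j(x_i)\in E_{\mathcal U}$ is represented by a sequence $(y^{(j)}_{i,n})_n$ in $E$; for $\mathcal U$-almost every $n$, the map $\phi_{j,n}\colon X\to E$ sending $x_i\mapsto y^{(j)}_{i,n}$ is a $\delta''$-isometry for any fixed $\delta''$ slightly bigger than $\delta'$ (by the standard ultrapower computation of norms as $\mathcal U$-limits, applied to a finite $\varepsilon$-net of $S_X$ and using finite-dimensionality). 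So for $n$ in a set $A\in\mathcal U$ we have $\phi_{1,n},\phi_{2,n}\in\mathrm{Emb}_{\delta}(X,E)$. By the choice of $\delta$ and Proposition \ref{equivalent definitions of Fraisse}(2), for each such $n$ there is $T_n\in\mathrm{Isom}_{\varepsilon_n}(E)$ with $\varepsilon_n\le \varepsilon$ (in fact $\le\varepsilon/2$, but let me just say $\le\varepsilon$) and $T_n\circ\phi_{1,n}=\phi_{2,n}$; for $n\notin A$ set $T_n=\mathrm{Id}$. Then $T:=[(T_n)]_{\mathcal U}$ lies in $(\mathrm{Isom}^{\varepsilon}(E))_{\mathcal U}$ (here one uses $\lim_{\mathcal U}\varepsilon_n\le\varepsilon$; a minor point is that the definition asks for the limit to equal $\varepsilon$ rather than be $\le\varepsilon$, which is harmless since $(\mathrm{Isom}^{\varepsilon'}(E))_{\mathcal U}\subseteq\mathrm{Isom}_{\varepsilon}(E_{\mathcal U})$ for $\varepsilon'\le\varepsilon$, or one simply pads the $\varepsilon_n$), and by construction $T\circ\phi_1=\phi_2$ on the basis, hence on all of $X$.

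The one genuine subtlety — and the step I expect to be the main obstacle — is the passage from "$\phi_j$ is a $\delta'$-isometry on $X$" to "$\phi_{j,n}$ is a $\delta$-isometry on $X$ for $\mathcal U$-almost all $n$". One cannot check the norm condition on all of $S_X$ pointwise in $n$ simultaneously, so I would fix a finite $\eta$-net $N$ of $S_X$, use that $\|\phi_j(x)\|_{E_{\mathcal U}}=\lim_{\mathcal U}\|\phi_{j,n}(x)\|_E$ for each of the finitely many $x\in N$ to get a single set $A\in\mathcal U$ on which all these norms are within $\eta$ of their limits, and then propagate the estimate from the net to all of $S_X$ using the uniform bound on $\|\phi_{j,n}\|$ (which itself follows from the norm values on a net plus finite-dimensional interpolation). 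Choosing $\eta$ small relative to $\delta-\delta'$ makes this work. A second, purely bookkeeping point is that $X$ lies only in $\overline{\mathrm{Age}_k(E)}^{\mathrm{BM}}$, not necessarily in $\mathrm{Age}_k(E)$; but Proposition \ref{equivalent definitions of Fraisse}(2) is precisely stated for $X\in\overline{\mathrm{Age}_k(E)}^{\mathrm{BM}}$, so no extra perturbation argument is needed there — the $\overline{\phantom{A}}^{\mathrm{BM}}$ version is exactly what the hypothesis of the lemma wants. Finally, I would remark that symmetrizing (apply the argument to $\phi_1$ and to $\phi_2$ separately against a fixed third embedding) recovers the $T|_X=\phi$ form if needed, exactly as in the proof of Proposition \ref{equivalent definitions of weak Fraisse}.
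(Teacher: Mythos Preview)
Your proposal is correct and follows essentially the same route as the paper's proof: lift $\phi_1,\phi_2$ coordinate-wise to sequences $(\phi_{1,n}),(\phi_{2,n})$ of linear maps $X\to E$, use finite-dimensionality of $X$ to show these are $\delta$-embeddings for $n$ in some set $A\in\mathcal U$, apply the \textbf{AF} property of $E$ (in the form of Proposition~\ref{equivalent definitions of Fraisse}(2)) at each such $n$ to obtain $T_n\in\mathrm{Isom}_\xi(E)$ with $T_n\circ\phi_{1,n}=\phi_{2,n}$, set $T_n=\mathrm{Id}$ for $n\notin A$, and assemble $T=[(T_n)]_{\mathcal U}\in(\mathrm{Isom}^{\varepsilon}(E))_{\mathcal U}$. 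Your $\eta$-net discussion makes explicit the step the paper summarizes by ``$\|\phi_i\|=\lim_{\mathcal U}\|\phi_n^i\|$''; the final remark about symmetrizing is superfluous here, since the lemma is already stated in the two-embedding form.
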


\begin{proof}
  Let $\varepsilon>0$ and $k\in\mathbb N$ be given and $\xi>0$ satisfying $(1+\xi)^2\leq1+\varepsilon$. Fix $\delta_0>0$ corresponding to the definition of \textbf{AF} and set $\delta=\delta_0/2$. Also, let $X\in\overline{\mathrm{Age}_k(E)}^{\mathrm{BM}}$ and $\phi_1,\phi_2\in\mathrm{Emb}_{\delta'}(X,E_\mathcal U)$. Since $\dim X<\infty$, there are two sequences $(\phi_n^1)$ and $(\phi_n^2)$ of linear operators from $X$ to $E$ such that $\phi_1x=[(\phi_n^1(x))]_{\mathcal U}$ and $\phi_2x=[(\phi_n^2(x))]_{\mathcal U}$ for all $x\in X$,  $\|\phi_1\|=\lim_{\mathcal U}\|\phi_n^1\|$ and $\|\phi_2\|=\lim_{\mathcal U}\|\phi_n^2\|$.
  Let $A\in\mathcal U$ be such that $\phi_n^1,\phi_n^2\in\mathrm{Emb}_\delta(X,E)$ for all $n\in A$. By definition of \textbf{AF}, 
 for each $n\in A$, there exists $T_n\in\mathrm{Isom}_{\xi}(E)$ such that $T_n\circ\phi_n^1=\phi_n^2$. Define $T\colon E_{\mathcal U}\to E_{\mathcal U}$ by $[(x_n)]_{\mathcal U}\mapsto[(y_n)]_{\mathcal U}$, where
  \begin{gather*}
      y_n=
      \begin{cases}
      T_n(x_n),& n\in A;\\
      x_n,& n\not\in A.
      \end{cases}
  \end{gather*}
 Suppose that $[(x_n)]_{\mathcal U}=[(x_n')]_{\mathcal U}$, then $\{n\in\mathbb N\,\colon\,\|x_n-x_n'\|<r/(1+\xi)\}\in\mathcal U$ for all $r>0$.
 Thus, $\{n\in A\,\colon\,\|x_n-x_n'\|<r/(1+\xi)\}\in\mathcal U$ and hence, $\{n\in A\,\colon\,\|T_n(x_n)-T_n(x_n')\|<r\}\in\mathcal U$. So, $T$ is well defined. Also, $\|T\|\leq1+\xi$. Now,
 assume that $T([(x_n)]_{\mathcal{ U}})=[(y_n)]_{\mathcal U}={\bf0}.$ Thus, for each $r>0$, $\{n\in\mathbb N\,\colon\,\|y_n\|<r/(1+\xi)\}\in\mathcal U$ and it follows that $\{n\in A\,\colon\,\|y_n\|<r/(1+\xi)\}\in\mathcal U$. Then,
 $\{n\in A\,\colon\,\|x_n\|<r\}\in\mathcal U$, i.e., $[(x_n)]_{\mathcal{U}}={\bf0}.$ From its definition we have $\|T^{-1}\|\leq1+\xi$. Whence $T\in(\mathrm{Isom}^\varepsilon(E))_{\mathcal U}$. Finally, since $A\subset\{n\in\mathbb N\,\colon\,\|\phi_n^2x-(T_n\circ\phi_n^1)x\|<r\}$ for all $r>0$ and $x\in X$, we have $T\circ\phi_1=\phi_2$.
\end{proof}

Note that the previous proof also works  
under the assumption that $E$ is weak \textbf{AF} and that $X \in \mathrm{Age}(E)$. So it is worth noting  the next result.

\begin{lemma}\label{wAF for subspaces of ultrapowers}
Let $E$ be a weak \textbf{AF} Banach space and $\mathcal U$ be a non-principal ultrafilter on $\mathbb N$. Then for each $\varepsilon>0$ and $X\in\mathrm{Age}(E)$, there is $\delta>0$ with the following property:
 if $\phi_1,\phi_2\in\mathrm{Emb}_{\delta}(X,E_{\mathcal U})$,
there exists 
$T \in (\mathrm{Isom}^{\varepsilon}(E))_\mathcal{U}$ such that $T\circ\phi_1=\phi_2$.
\end{lemma}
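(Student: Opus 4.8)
The plan is to mimic the proof of Lemma \ref{AF for subspaces of ultrapowers} almost verbatim, simply observing that the only place where the \textbf{AF} hypothesis (uniformity in the dimension $k$) and the enlarged class $\overline{\mathrm{Age}_k(E)}^{\mathrm{BM}}$ were used was to produce, for a given $X$, a $\delta$ depending only on $\dim X$ and on an arbitrary representative approximation. For the weak \textbf{AF} version we fix $X \in \mathrm{Age}(E)$ at the outset, so we may apply the definition of weak \textbf{AF} directly to this particular $X$.

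Concretely: given $\varepsilon>0$ and $X\in\mathrm{Age}(E)$, pick $\xi>0$ with $(1+\xi)^2\le 1+\varepsilon$ and let $\delta>0$ be the number furnished by the weak \textbf{AF} property (in the form (5) of Proposition \ref{equivalent definitions of weak Fraisse}) for this $X$ and $\xi$. Now take $\phi_1,\phi_2\in\mathrm{Emb}_\delta(X,E_{\mathcal U})$. Since $X$ is finite dimensional, lift $\phi_1,\phi_2$ to sequences $(\phi^1_n)_n,(\phi^2_n)_n$ of linear maps $X\to E$ with $\phi_i x=[(\phi^i_n(x))]_{\mathcal U}$ and $\|\phi_i\|=\lim_{\mathcal U}\|\phi^i_n\|$; this uses a standard Auerbach-basis argument on $X$. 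Choose $A\in\mathcal U$ so that $\phi^1_n,\phi^2_n\in\mathrm{Emb}_\delta(X,E)$ for all $n\in A$ (here the set where the $\delta$-isometry inequalities hold on a finite Auerbach basis lies in $\mathcal U$). For each $n\in A$, weak \textbf{AF} — via $\phi^2_n=(T_n\circ\phi^1_n)$ read through (5) of Proposition \ref{equivalent definitions of weak Fraisse} — yields $T_n\in\mathrm{Isom}_\xi(E)$ with $T_n\circ\phi^1_n=\phi^2_n$; for $n\notin A$ set $T_n=\mathrm{Id}$.

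Then define $T$ on $E_{\mathcal U}$ by $[(x_n)]_{\mathcal U}\mapsto[(T_nx_n)]_{\mathcal U}$. The well-definedness, the bounds $\|T\|\le 1+\xi$ and $\|T^{-1}\|\le 1+\xi$ (so $T\in(\mathrm{Isom}^\varepsilon(E))_{\mathcal U}$, noting the exponent absorbs $\xi$ since $\lim_n \xi_n = $ some value $\le\xi$ with $(1+\xi)\le(1+\varepsilon)^{1/2}$ — actually one just needs each $T_n\in\mathrm{Isom}_{\xi}(E)$ and $(1+\xi)^2\le1+\varepsilon$ is not even required, $1+\xi\le1+\varepsilon$ suffices, but keeping $\xi\le\varepsilon$ is cleanest), and the identity $T\circ\phi_1=\phi_2$ are all checked exactly as in Lemma \ref{AF for subspaces of ultrapowers}: for each $r>0$ and $x\in X$ the set $\{n\in A:\|\phi^2_n(x)-(T_n\circ\phi^1_n)(x)\|<r\}=A\in\mathcal U$.

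The only genuine point requiring care — and the ``main obstacle'', though it is mild — is the lifting step: one must know that a $\delta$-isometric embedding of a fixed finite dimensional $X$ into $E_{\mathcal U}$ can be represented by a sequence of maps that are eventually (along $\mathcal U$) $\delta$-isometric embeddings of $X$ into $E$. This follows because on a finite Auerbach basis $\{x_1,\dots,x_m\}$ of $X$ the conditions ``$\frac{1}{1+\delta}\|\sum a_j x_j\|\le\|\sum a_j \phi^i_n(x_j)\|\le(1+\delta)\|\sum a_j x_j\|$ for all $(a_j)$'' reduce, by compactness of the sphere of $X$ and continuity, to finitely many closed conditions each of which holds $\mathcal U$-eventually; intersecting the corresponding sets in $\mathcal U$ gives the desired $A$. (Alternatively, since we only need eventual membership in $\mathrm{Emb}_\delta$, we could start from $\mathrm{Emb}_{\delta/2}$ and absorb the approximation, as in the $\delta'=\delta_0/2$ trick of Lemma \ref{AF for subspaces of ultrapowers}.) Everything else is a routine transcription, so I would simply write: ``The proof is identical to that of Lemma \ref{AF for subspaces of ultrapowers}, replacing the use of the \textbf{AF} property of $E$ applied to $X\in\overline{\mathrm{Age}_k(E)}^{\mathrm{BM}}$ by the weak \textbf{AF} property applied to the fixed $X\in\mathrm{Age}(E)$.''
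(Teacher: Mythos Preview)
Your proposal is correct and matches the paper's approach exactly: the paper does not give a separate proof of this lemma but simply remarks, just before its statement, that ``the previous proof also works under the assumption that $E$ is weak \textbf{AF} and that $X \in \mathrm{Age}(E)$.'' Your final suggested sentence is essentially what the paper does, and your expanded discussion (including the $\delta_0/2$ trick to ensure the lifted maps land in $\mathrm{Emb}_{\delta_0}(X,E)$ along $\mathcal U$) fills in the details faithfully.
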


\begin{lemma}\label{ultrahomogeneity for subspaces}
Let $E$ be an \textbf{AF} Banach space and $\mathcal U$ be a non-principal ultrafilter on $\mathbb N$. Then for every $k\in\mathbb N$, $X\in\overline{\mathrm{Age}_k(E)}^{\mathrm{BM}}$ and  $\phi_1,\phi_2\in\mathrm{Emb}(X,E_{\mathcal U})$, there exists $T\in\mathrm{Isom}(E_{\mathcal U})$ such that $T\circ\phi_1=\phi_2$.
\end{lemma}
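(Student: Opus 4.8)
The plan is to bootstrap the quantitative Lemma \ref{AF for subspaces of ultrapowers} to the present exact statement. Since an isometric embedding is in particular a $\delta$-embedding for every $\delta>0$, that lemma already produces, for each $n$, a diagonal-type map $T^{(n)}\in\mathrm{Isom}_{1/n}(E_{\mathcal U})$ with $T^{(n)}\circ\phi_1=\phi_2$; but there is no reason for the $T^{(n)}$ to converge, and a naive diagonalisation over $n$ fails for the usual reason that diagonals interact badly with $\mathcal U$-limits. Instead I would run the construction once, at the level of representing sequences, choosing the tolerance $m$-by-$m$ so that it vanishes only along $\mathcal U$.

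Concretely, fix representing sequences $(\phi^1_m)_m,(\phi^2_m)_m$ of linear maps $X\to E$ for $\phi_1,\phi_2$ with $\lim_{\mathcal U}\|\phi^j_m\|=\|\phi_j\|<\infty$. Using finite-dimensionality of $X$ together with the fact that $\phi_1,\phi_2$ are isometric, one shows that $\{m:\phi^1_m,\phi^2_m\in\mathrm{Emb}_\delta(X,E)\}\in\mathcal U$ for every $\delta>0$ (this is exactly the observation already used inside the proof of Lemma \ref{AF for subspaces of ultrapowers}). Consequently one may attach to each $m$ in some set of $\mathcal U$ a number $\delta_m>0$ with $\phi^1_m,\phi^2_m\in\mathrm{Emb}_{\delta_m}(X,E)$ and $\lim_{\mathcal U}\delta_m=0$. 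Now let $(\delta(n))_n$ be a sequence of tolerances witnessing the \textbf{AF}-property for dimension $k$ and error $1/n$ in the form of condition (2) of Proposition \ref{equivalent definitions of Fraisse}, which we may take strictly decreasing to $0$. Set $n(m)=\max\{n:\delta_m\le\delta(n)\}$; then $1\le n(m)<\infty$ for all $m$ in a set of $\mathcal U$, and $\lim_{\mathcal U}n(m)=\infty$. For such $m$, condition (2) of Proposition \ref{equivalent definitions of Fraisse} applied to $X\in\overline{\mathrm{Age}_k(E)}^{\mathrm{BM}}$ and to $\phi^1_m,\phi^2_m\in\mathrm{Emb}_{\delta(n(m))}(X,E)$ yields $T_m\in\mathrm{Isom}_{1/n(m)}(E)$ with $T_m\circ\phi^1_m=\phi^2_m$; for the remaining $m$, which form a set not belonging to $\mathcal U$, set $T_m=\mathrm{Id}_E$. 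Finally define $T([(x_m)]_{\mathcal U})=[(T_mx_m)]_{\mathcal U}$.

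It then remains to check that $T\in\mathrm{Isom}(E_{\mathcal U})$ and $T\circ\phi_1=\phi_2$, and these are routine $\mathcal U$-limit computations: $T$ is well defined and bounded since $\|T_m\|,\|T_m^{-1}\|\le2$; it is isometric because $\|T_mx_m\|/\|x_m\|\in[(1+1/n(m))^{-1},\,1+1/n(m)]$ and $\lim_{\mathcal U}1/n(m)=0$; it is surjective by inverting coordinatewise, each $T_m$ being a surjective $1/n(m)$-isometry, hence invertible with $\|T_m^{-1}\|\le 1+1/n(m)$; and $T\circ\phi_1=\phi_2$ because $T_m\circ\phi^1_m=\phi^2_m$ holds on a set belonging to $\mathcal U$. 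The one step that is not pure bookkeeping is the implication ``$\phi_1,\phi_2$ isometric $\Rightarrow \{m:\phi^1_m,\phi^2_m\in\mathrm{Emb}_\delta(X,E)\}\in\mathcal U$'': this is a compactness argument on the unit sphere of the finite-dimensional space $X$ — cover it by finitely many small balls, apply $\lim_{\mathcal U}\|\phi^j_m x\|=\|x\|$ at the centres, and control the oscillation between centre and ball using $\lim_{\mathcal U}\|\phi^j_m\|<\infty$. Relative to the Fra\"iss\'e case treated in \cite{ferenczi et all}, the only new ingredient is that the error $1/n(m)$ is permitted to depend on the coordinate $m$ and to tend to $0$ merely along $\mathcal U$.
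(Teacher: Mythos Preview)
Your proof is correct and follows essentially the same strategy as the paper's: both construct the desired isometry $T$ of $E_{\mathcal U}$ coordinatewise, using at coordinate $m$ an element of $\mathrm{Isom}_{\varepsilon_m}(E)$ provided by the \textbf{AF} property, with $\varepsilon_m\to 0$ along $\mathcal U$. The only differences are bookkeeping: the paper first relabels so that the representing maps satisfy $\phi_n^i\in\mathrm{Emb}_{\delta_n}(X,E)$ with $\delta_n$ the AF tolerance for error $1/n$ (and perturbs to avoid exact isometries, ensuring the ``best level'' $k(n)$ is finite), whereas you keep the original representatives and encode the best available tolerance via your function $n(m)$; your compactness argument on $S_X$ is the explicit form of what the paper summarises by choosing a null sequence $(\alpha_n)$ with $\phi_n^i\in\mathrm{Emb}_{\alpha_n}(X,E)$.
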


\begin{proof}
Let $k\in\mathbb N$ be fixed and also let $X\in\overline{\mathrm{Age}_k(E)}^{\mathrm{BM}}$ and $\phi_1,\phi_2\in\mathrm{Emb}(X,E_\mathcal U)$. For each $m\in\mathbb N$, let $\delta_m>0$ be the corresponding number to $1/m$ in the definition of \textbf{AF} and assume that $\delta_m\to0$. Since $\dim X<\infty$, there are two sequences $(\phi_n^1)$ and $(\phi_n^2)$ of linear operators from $X$ to $E$ and a null sequence of positive numbers $(\alpha_n)$ such that $\phi_1 x=[(\phi_n^1x)]_{\mathcal U}$ and $\phi_2 x=[(\phi_n^2x)]_{\mathcal U}$ for all $x\in X$, and for $i=1,2$ we have $\|\phi_i\|=\lim_{\mathcal U}\|\phi_n^i\|$ and $\phi_n^i$ is an $\alpha_n$-isometry for each $n\in\mathbb N$. We may suppose that $\alpha_m<\delta_m$ for each $m\in\mathbb N$.  So, the set $A_m=\{n\in\mathbb N\,\colon\,\phi_n^1,\phi_n^2\in\mathrm{Emb}_{\delta_m}(X,E)\}$ is in $\mathcal U$ for every $m\in\mathbb N$. By taking small perturbations, we assume that $\phi_n$ is not an isometry for each $n\in\mathbb N$. Thus, $\bigcap_S A_m=\emptyset$ for each  $S\subset \mathbb N$ infinite. By definition of \textbf{AF}, for each $m\in\mathbb N$ and $n\in A_m$, there is $T_n^m\in\mathrm{Isom}_{1/m}(E)$ with $T_n^m\circ\phi_n^1=\phi_n^2$. If $n\in\mathbb N$, let
$k(n)\in\mathbb N$ be the maximal $k$ satisfying $n\in A_k$ and define $T\colon E_{\mathcal U}\to E_{\mathcal U}$ by $[(x_n)]_{\mathcal U}\mapsto[(y_n)]_{\mathcal U}$, where
  \begin{gather*}
      y_n=
      \begin{cases}
      T_n^{k(n)}(x_n),& n\in\bigcup A_m;\\
      x_n,& n\not\in\bigcup A_m.
      \end{cases}
  \end{gather*}
 Since $k(n)\geq n$ for each $n\in\mathbb N$,
 $T$ is an isometry and by following the proof of Lemma \ref{AF for subspaces of ultrapowers} we conclude that $T\circ\phi_1=\phi_2$.
\end{proof}

It is worth noting the following  statement for weak \textbf{AF} spaces, which is obtained through a similar proof.

\begin{lemma}
Let $E$ be a weak \textbf{AF} Banach space and $\mathcal U$ be a non-principal ultrafilter on $\mathbb N$. Then for every $X\in\mathrm{Age}(E)$ and  $\phi_1,\phi_2\in\mathrm{Emb}(X,E_{\mathcal U})$, there exists $T\in\mathrm{Isom}(E_{\mathcal U})$ such that $T\circ\phi_1=\phi_2$.
\end{lemma}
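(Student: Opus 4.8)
The plan is to adapt the proof of Lemma~\ref{ultrahomogeneity for subspaces} with the only changes being that the index $k$ and the closure $\overline{\mathrm{Age}_k(E)}^{\mathrm{BM}}$ are replaced by a fixed $X\in\mathrm{Age}(E)$, using the weak \textbf{AF} property in place of the \textbf{AF} property. First I would fix $X\in\mathrm{Age}(E)$ and $\phi_1,\phi_2\in\mathrm{Emb}(X,E_\mathcal{U})$. For each $m\in\mathbb{N}$, let $\delta_m>0$ be the number provided by the weak \textbf{AF} property (condition (5) of Proposition~\ref{equivalent definitions of weak Fraisse}, applied to this particular $X$) corresponding to $\varepsilon=1/m$; we may assume $\delta_m\to 0$ and $\delta_{m+1}<\delta_m$.

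Next I would unfold the two embeddings: since $\dim X<\infty$, choose sequences $(\phi_n^1)_n,(\phi_n^2)_n$ of operators $X\to E$ and a null sequence $(\alpha_n)_n$ of positive reals with $\phi_i x=[(\phi_n^i x)]_\mathcal{U}$ for all $x\in X$, $\|\phi_i\|=\lim_\mathcal{U}\|\phi_n^i\|$, and each $\phi_n^i\in\mathrm{Emb}_{\alpha_n}(X,E)$ (the $\alpha_n$ come from a standard argument realizing an isometry on $X$ as an ultralimit of near-isometries; we further arrange by small perturbation that $\phi_n^i$ is never an exact isometry). Passing to $\alpha_m<\delta_m$, the sets $A_m=\{n:\phi_n^1,\phi_n^2\in\mathrm{Emb}_{\delta_m}(X,E)\}$ all lie in $\mathcal{U}$, and since no $\phi_n^i$ is an isometry, $\bigcap_{m\in S}A_m=\emptyset$ for every infinite $S\subseteq\mathbb{N}$, so for each $n$ there is a maximal $k(n)$ with $n\in A_{k(n)}$, and $k(n)\to\infty$. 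For $m\in\mathbb{N}$ and $n\in A_m$ the weak \textbf{AF} property gives $T_n^m\in\mathrm{Isom}_{1/m}(E)$ with $T_n^m\circ\phi_n^1=\phi_n^2$. Then define $T$ on $E_\mathcal{U}$ by $[(x_n)]_\mathcal{U}\mapsto[(y_n)]_\mathcal{U}$ where $y_n=T_n^{k(n)}(x_n)$ if $n\in\bigcup_m A_m$ and $y_n=x_n$ otherwise.

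Finally I would verify, exactly as in the proofs of Lemmas~\ref{AF for subspaces of ultrapowers} and~\ref{ultrahomogeneity for subspaces}, that $T$ is well-defined (the distortion of $T_n^{k(n)}$ on the relevant indices tends to $1$ since $k(n)\to\infty$), that $T$ is a surjective isometry of $E_\mathcal{U}$ (the upper and lower bounds $1+1/k(n)$ on $T_n^{k(n)}$ and its inverse pass to the ultralimit as $1$), and that $T\circ\phi_1=\phi_2$ because $A_m\subseteq\{n:\|\phi_n^2 x-(T_n^{k(n)}\circ\phi_n^1)x\|<r\}$ for every $r>0$, $m$, and $x\in X$ once $k(n)$ is chosen maximal. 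The only point requiring care — and thus the main obstacle, though it is already handled in the cited lemmas — is ensuring $\bigcap_{m\in S}A_m=\emptyset$ for infinite $S$, which forces $k(n)$ to be finite and tending to infinity; this is exactly where the preliminary perturbation making each $\phi_n^i$ a genuine (non-zero-distortion) near-isometry is used. Since none of these steps uses the uniformity in dimension, replacing "\textbf{AF}, $X\in\overline{\mathrm{Age}_k(E)}^{\mathrm{BM}}$" by "weak \textbf{AF}, $X\in\mathrm{Age}(E)$" throughout yields the statement.
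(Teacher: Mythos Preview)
Your proposal is correct and follows precisely the approach the paper intends: the paper does not spell out a proof for this lemma but states that it ``is obtained through a similar proof'' to Lemma~\ref{ultrahomogeneity for subspaces}, and your adaptation---replacing the uniform \textbf{AF} choice of $\delta_m$ depending only on $k$ by the weak \textbf{AF} choice depending on the fixed $X\in\mathrm{Age}(E)$---is exactly that similar proof. One minor remark: your claim that $k(n)\to\infty$ is best justified (as in the paper) by noting that $\alpha_n<\delta_n$ with $(\delta_m)$ decreasing forces $n\in A_n$, hence $k(n)\geq n$; the finiteness of $k(n)$ from the perturbation and the empty-intersection argument is what makes $k(n)$ well defined, but the growth comes from $k(n)\geq n$.
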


\begin{theorem}\label{ultrapowers of AF are UH}
Let $E$ be a Banach space and $\mathcal U$ be a non-principal ultrafilter on $\mathbb N$. If $E$ is \textbf{AF}, then $E_\mathcal U$ is \textbf{AF} and \textbf{UH}.
\end{theorem}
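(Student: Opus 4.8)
The claim splits into two parts: that $E_{\mathcal U}$ is \textbf{UH}, and that $E_{\mathcal U}$ is \textbf{AF}. For the \textbf{UH} part, let $Y \in \mathrm{Age}(E_{\mathcal U})$, say $\dim Y = k$, and let $\phi \in \mathrm{Emb}(Y, E_{\mathcal U})$; I must extend $\phi$ to a surjective isometry of $E_{\mathcal U}$. The key observation is that any finite-dimensional subspace of $E_{\mathcal U}$ lies in $\overline{\mathrm{Age}_k(E)}^{\mathrm{BM}}$: indeed a $k$-dimensional $Y \subseteq E_{\mathcal U}$ is the $\mathcal U$-limit of $k$-dimensional subspaces $Y_n \subseteq E$ (spanned by representatives of a basis), so $Y$ is $\mathrm{BM}$-approximated by elements of $\mathrm{Age}_k(E)$, hence $Y \in \overline{\mathrm{Age}_k(E)}^{\mathrm{BM}}$; and $\phi(Y)$ is likewise in that set. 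Thus both the identity inclusion $Y \hookrightarrow E_{\mathcal U}$ and $\phi$ are elements of $\mathrm{Emb}(Y, E_{\mathcal U})$ with $Y \in \overline{\mathrm{Age}_k(E)}^{\mathrm{BM}}$, and Lemma \ref{ultrahomogeneity for subspaces} (applied with $\phi_1 = \iota_Y$, $\phi_2 = \phi$) produces $T \in \mathrm{Isom}(E_{\mathcal U})$ with $T \circ \iota_Y = \phi$, i.e. $T|_Y = \phi$. This is exactly \textbf{UH}.

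For the \textbf{AF} part I verify condition (2) of Proposition \ref{equivalent definitions of Fraisse} for the space $E_{\mathcal U}$: given $\varepsilon > 0$ and $k \in \mathbb N$, I must find $\delta > 0$ such that whenever $Y \in \overline{\mathrm{Age}_k(E_{\mathcal U})}^{\mathrm{BM}}$ and $\psi_1, \psi_2 \in \mathrm{Emb}_\delta(Y, E_{\mathcal U})$, there is $T \in \mathrm{Isom}_\varepsilon(E_{\mathcal U})$ with $\psi_2 = T \circ \psi_1$. The plan is to take the $\delta$ provided by Lemma \ref{AF for subspaces of ultrapowers} for the data $(\varepsilon, k)$ (perhaps shrunk slightly). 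One then needs that $\overline{\mathrm{Age}_k(E_{\mathcal U})}^{\mathrm{BM}}$ coincides with (or is contained in, up to isometry) $\overline{\mathrm{Age}_k(E)}^{\mathrm{BM}}$ — this follows because $\mathrm{Age}_k(E_{\mathcal U}) \subseteq \overline{\mathrm{Age}_k(E)}^{\mathrm{BM}}$ as noted above, and the latter is $\mathrm{BM}$-closed, so taking $\mathrm{BM}$-closures gives equality. Hence $Y$ is isometric to some $X \in \overline{\mathrm{Age}_k(E)}^{\mathrm{BM}}$; transporting $\psi_1, \psi_2$ through that isometry gives $\delta$-embeddings of $X$ into $E_{\mathcal U}$, and Lemma \ref{AF for subspaces of ultrapowers} yields $T \in (\mathrm{Isom}^\varepsilon(E))_{\mathcal U} \subseteq \mathrm{Isom}_\varepsilon(E_{\mathcal U})$ with $T \circ \psi_1 = \psi_2$.

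\textbf{Main obstacle.} The delicate point is the bookkeeping around $\overline{\mathrm{Age}_k(E_{\mathcal U})}^{\mathrm{BM}}$ versus $\overline{\mathrm{Age}_k(E)}^{\mathrm{BM}}$ and the compositions of near-isometries: one must be careful that when $Y \subseteq E_{\mathcal U}$ is only \emph{$\mathrm{BM}$-close} to a subspace of $E$ (not literally isometric to one), the $\delta$-embeddings compose to embeddings with a slightly worse constant, so one should start from a $\delta' < \delta$ and absorb the loss — exactly the kind of $(1+\delta')(1+\eta) < 1 + \delta$ juggling already used repeatedly in Proposition \ref{equivalent definitions of Fraisse} and Lemma \ref{impli}. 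Everything else is a direct invocation of Lemmas \ref{AF for subspaces of ultrapowers} and \ref{ultrahomogeneity for subspaces}, which were precisely engineered (note they quantify over $X \in \overline{\mathrm{Age}_k(E)}^{\mathrm{BM}}$, not just $\mathrm{Age}_k(E)$) to make this theorem fall out.
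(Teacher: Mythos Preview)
Your proposal is correct and follows essentially the same route as the paper: both parts reduce immediately to Lemmas \ref{AF for subspaces of ultrapowers} and \ref{ultrahomogeneity for subspaces} via the identification $\mathrm{Age}_k(E_{\mathcal U})\equiv\overline{\mathrm{Age}_k(E)}^{\mathrm{BM}}$ (as isometry classes). Your ``main obstacle'' is slightly overthought: since the lemmas are stated for $X\in\overline{\mathrm{Age}_k(E)}^{\mathrm{BM}}$ and every $Y\in\mathrm{Age}_k(E_{\mathcal U})$ is \emph{literally} (the isometry class of) such an $X$, no $\delta'<\delta$ absorption is needed --- you may verify condition (3) or (4) of Proposition \ref{equivalent definitions of Fraisse} directly for $E_{\mathcal U}$ and apply Lemma \ref{AF for subspaces of ultrapowers} to $Y$ itself.
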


\begin{proof} Since $\mathrm{Age}_k(E_{\mathcal U})\equiv\overline{\mathrm{Age}_k(E)}^{\mathrm{BM}}$ for each $k\in\mathbb N$ the \textbf{AF}-result follows from Lemma \ref{AF for subspaces of ultrapowers}.
The \textbf{UH} follows from Lemma \ref{ultrahomogeneity for subspaces} and from the
fact that $\mathrm{Age}_k(E_{\mathcal U})\equiv\overline{\mathrm{Age}_k(E)}^{\mathrm{BM}}$ for each $k\in\mathbb N$.
\end{proof}

Recall from the beginning of this subsection that $\mathrm{Isom}^0(E)_{\mathcal U}$ denotes the set
of all elements $T\in\mathrm{Isom}(E_{\mathcal U})$ satisfying 
the next condition: there are a null sequence $(a_n)\in(0,\infty)^{\mathbb N}$  and a sequence of operators $(T_n)$ with  $T_n\in\mathrm{Isom}_{a_n}(E)$ for each $n\in\mathbb N$ such that $T([(x_n)]_{\mathcal U})=[(T_nx_n)]_{\mathcal U}$ for all $[(x_n)]_{\mathcal U}\in E_{\mathcal U}$. The set $\mathrm{Isom}^0(E)_{\mathcal U}$ can be compared with 
the set $\mathrm{Isom}(E)_{\mathcal U}$ of all elements of $\mathrm{Isom}(E_\mathcal{U})$ of the form $[(x_n)]_{\mathcal U}\mapsto[(g_n(x_n))]_{\mathcal U}$ for some sequence $(g_n)\in\mathrm{Isom}(E)^{\mathbb N}$, which was considered in \cite[Proposition 2.15]{ferenczi et all}. We have the obvious inclusion
 $\mathrm{Isom}(E)_{\mathcal U} \subseteq \mathrm{Isom}^0(E)_{\mathcal U}$.
 
\begin{proposition}
\begin{enumerate}
\item if $E_\mathcal{U}$ is \textbf{AUH} and the set $\mathrm{Isom}^0(E)_{\mathcal U}$ is dense with respect to the strong operator topology in $\mathrm{Isom}(E_\mathcal{U})$, then $E$ is \textbf{AF}.
\item if $E_\mathcal{U}$ is almost \textbf{UH} and the set $(\mathrm{Isom}^\delta(E))_{\mathcal U}$ is dense with respect to the strong operator topology in $\mathrm{Isom}_\delta(E_\mathcal{U})$, then $E$ is \textbf{AF}.
\end{enumerate}
\end{proposition}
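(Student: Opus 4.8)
The plan is to argue by contraposition: assuming $E$ is not \textbf{AF}, I will use a non-extendable family of near-isometries of finite-dimensional subspaces of $E$ to build, inside $E_{\mathcal U}$, a finite-dimensional subspace together with an \emph{isometric} embedding which, combined with the extension properties of $E_{\mathcal U}$ and the relevant density hypothesis, forces one of the near-isometries to admit an extension in $\mathrm{Isom}_{\varepsilon_0}(E)$ --- a contradiction. Concretely, if $E$ fails condition (4) of Proposition \ref{equivalent definitions of Fraisse}, there are $k_0\in\mathbb N$ and $\varepsilon_0>0$ such that for each $n$ one has $X_n\in\mathrm{Age}_{k_0}(E)$ and $\phi_n\in\mathrm{Emb}_{1/n}(X_n,E)$ with no extension in $\mathrm{Isom}_{\varepsilon_0}(E)$. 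Fix an Auerbach basis $(x_n^1,\dots,x_n^{k_0})$ of each $X_n$; since Auerbach bases are uniformly $k_0$-equivalent to the unit vector basis of $\ell_\infty^{k_0}$, the vectors $e^j:=[(x_n^j)]_{\mathcal U}$ span a genuinely $k_0$-dimensional subspace $\mathcal X\subseteq E_{\mathcal U}$, hence $\mathcal X\in\mathrm{Age}(E_{\mathcal U})$; and, using $1/n\to 0$, the map $\Phi\colon\mathcal X\to E_{\mathcal U}$, $\Phi([(x_n)]_{\mathcal U})=[(\phi_n x_n)]_{\mathcal U}$, is well defined, linear and isometric, i.e. $\Phi\in\mathrm{Emb}(\mathcal X,E_{\mathcal U})$.

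Part (1). Fix a small $\eta=\eta(k_0,\varepsilon_0)>0$, to be specified at the end. Applying the \textbf{AUH} property of $E_{\mathcal U}$ to the inclusion $\mathcal X\hookrightarrow E_{\mathcal U}$ and to $\Phi$, and then approximating the resulting surjective isometry in the strong operator topology by an element of $\mathrm{Isom}^0(E)_{\mathcal U}$ (SOT-closeness on the finite basis $(e^j)_j$ upgrading to norm-closeness on the finite-dimensional $\mathcal X$, at the cost of a factor $k_0$), I obtain $S'\in\mathrm{Isom}^0(E)_{\mathcal U}$, say $S'([(x_n)]_{\mathcal U})=[(S'_n x_n)]_{\mathcal U}$ with $S'_n\in\mathrm{Isom}_{a_n}(E)$ and $a_n\to 0$, such that $\|S'|_{\mathcal X}-\Phi\|<\eta$. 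Unravelling the ultrapower norm, the set of $n$ with $a_n<\eta$ and $\|S'_n x_n^j-\phi_n x_n^j\|<\eta$ for all $j$ lies in $\mathcal U$, hence is nonempty; fix such an $n$, so that $\|S'_n|_{X_n}-\phi_n\|\le k_0\eta$. Putting $T_n:=S'_n-(S'_n|_{X_n}-\phi_n)\circ P_{X_n}$ for a projection $P_{X_n}\colon E\to X_n$ of norm $\le k_0$, we get $T_n|_{X_n}=\phi_n$ and $\|T_n-S'_n\|\le k_0^2\eta$; a routine perturbation estimate then bounds $\|T_n\|$ and $\|T_n^{-1}\|$ by $1+\varepsilon_0$ once $\eta$ was chosen small enough in terms of $k_0$ and $\varepsilon_0$. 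Thus $T_n\in\mathrm{Isom}_{\varepsilon_0}(E)$ extends $\phi_n$, contradicting the choice of $\phi_n$.

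Part (2) follows the same scheme with almost \textbf{UH} replacing \textbf{AUH}. First pick $\varepsilon:=\varepsilon_0/2$; almost \textbf{UH} of $E_{\mathcal U}$ provides $T\in\mathrm{Isom}_\varepsilon(E_{\mathcal U})$ with $T|_{\mathcal X}=\Phi$ exactly, and since $T\in\mathrm{Isom}_\varepsilon(E_{\mathcal U})=\mathrm{Isom}_\delta(E_{\mathcal U})$ for $\delta=\varepsilon$, the density of $(\mathrm{Isom}^{\varepsilon}(E))_{\mathcal U}$ in $\mathrm{Isom}_\varepsilon(E_{\mathcal U})$ yields $T'\in(\mathrm{Isom}^{\varepsilon}(E))_{\mathcal U}$, $T'([(x_n)]_{\mathcal U})=[(T'_n x_n)]_{\mathcal U}$ with $T'_n\in\mathrm{Isom}_{\varepsilon_n}(E)$ and $\varepsilon_n\to\varepsilon$, such that $\|T'|_{\mathcal X}-\Phi\|<\eta$. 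Extracting a usable coordinate $n$ (now additionally requiring $\varepsilon_n<\varepsilon+\eta$) and performing the same rank-$\le k_0$ correction $S_n:=T'_n-(T'_n|_{X_n}-\phi_n)\circ P_{X_n}$ produces $S_n\in\mathrm{Isom}_{\varepsilon_0}(E)$ extending $\phi_n$, provided $\eta$ is small in terms of $k_0$ and $\varepsilon_0$; again a contradiction.

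The only genuinely delicate point is the parameter bookkeeping in the final perturbation: one must verify that correcting a coordinatewise near-isometry $S'_n$ (resp. $T'_n$) by a rank-$\le k_0$ operator of norm $\le k_0^2\eta$ keeps the result inside $\mathrm{Isom}_{\varepsilon_0}(E)$, which is what dictates choosing $\eta$ (and, in (2), $\varepsilon$) as explicit functions of $k_0$ and $\varepsilon_0$ at the outset. Everything else --- that $\mathcal X$ is $k_0$-dimensional, that $\Phi$ is isometric, the SOT-to-norm passage on the finite-dimensional $\mathcal X$, and the selection of a single good ultrafilter coordinate --- is routine.
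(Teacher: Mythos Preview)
Your argument is correct and follows essentially the same route as the paper: contraposition, building the ultraproduct subspace $\mathcal X$ with its isometric embedding $\Phi$, invoking the extension property of $E_{\mathcal U}$, approximating in SOT by an element of the relevant diagonal group, extracting a good coordinate, and performing a rank-$\le k_0$ correction via a bounded projection. The only differences are cosmetic --- you make the Auerbach-basis construction of $\mathcal X$ explicit where the paper just invokes ``the natural finite-dimensional subspace associated to $(X_n)$'', and your parameter choices ($\eta$ generic, $\varepsilon=\varepsilon_0/2$ in part (2)) differ slightly from the paper's explicit $\varepsilon_0/4k_0$, but both sets of constants work.
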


\begin{proof}
Suppose that $E$ is not \textbf{AF}. Then there are $k_0\in\mathbb N$ and  $0<\varepsilon_0<1/4$ such that for each $n\in\mathbb N$, there exist $X_n\in\mathrm{Age}_{k_0}(E)$ and $\phi_n\in\mathrm{Emb}_{1/n}(X_n,E)$ with $A|_{X_n}\neq\phi_n$ for all $A\in\mathrm{Isom}_{\varepsilon_0}(E)$. Let $\hat{X}$ be the natural finite dimensional subspace of $E_\mathcal U$ associated to the sequence $(X_n)$ and $\phi\colon\hat{X}\to E_\mathcal{U}$ be 
defined as $\phi\hat{(x_n)}=[(\phi_nx_n)]_{\mathcal U}$, where $x_n\in X_n$ for each $n\in\mathbb N$. 

In case (1), since $E_\mathcal{U}$ is \textbf{AUH} and $\phi\in\mathrm{Emb}(\hat{X},E_\mathcal{U})$, there is $T\in\mathrm{Isom}(E_\mathcal{U})$ satisfying $\|T|_{\hat{X}}-\phi\|<\varepsilon_0/4k_0$. By the density of $\mathrm{Isom}^0(E)_{\mathcal U}$, there is $S\in\mathrm{Isom}^0(E)_{\mathcal U}$
such that $\|T-S\|_{\hat{X}}<\varepsilon_0/4k_0$. 
In case (2), since $E_\mathcal{U}$ is almost \textbf{UH} and $\phi\in\mathrm{Emb}(\hat{X},E_\mathcal{U})$, there is $T\in\mathrm{Isom}_{\varepsilon/4k_0}(E_\mathcal{U})$ satisfying $T|_{\hat{X}}=\phi$. By the density of $\mathrm{Isom}^\delta(E)_{\mathcal U}$, there is $S\in\mathrm{Isom}^\delta(E)_{\mathcal U}$
such that $\|T-S\|_{\hat{X}}<\varepsilon_0/4k_0$, where $\delta:=\varepsilon_0/4k_0$.

So $\|\phi-S|_{\hat{X}}\|<\varepsilon_0/2k_0$ which means that $\displaystyle\lim_\mathcal{U}\|\phi_n-S_n|_{X_n}\|<\varepsilon_0/2k_0,$ where $(S_n)$ is a sequence of $a_n$-isometries from $E$ onto $E$ with $a_n\to0$ in Case 1 or
$a_n\to \delta$ in Case 2. Thus 
$A=\{n\in\mathbb N\,\colon\,\|\phi_n-S_n|_{X_n}\|<\varepsilon_0/2k_0\}\in\mathcal U.$ Choose $n\in\mathbb N$ such that $\|\phi_n-S_n|_{X_n}\|<\varepsilon_0/2k_0$
and
$S_n$ is a $\epsilon_0/3$-isometry. If $P_{X_n}\colon E\to X_n$ is a projection with $\|P_{X_n}\|\leq k_0$, then $A_n:=S_n+(\phi_n-S_n)\circ P_{X_n}$ extends $\phi_n$ and a small computation shows that it is a $\varepsilon_0$-isometry, which is absurd. Hence $E$ is
\textbf{AF}.
\end{proof}

This is to compare with \cite[Proposition 2.15]{ferenczi et all} stating (implicitely) that if $E_\mathcal{U}$ is \textbf{AUH} and the set $\mathrm{Isom}(E)_{\mathcal U}$ is dense in $\mathrm{Isom}(E_\mathcal{U})$, then $E$ is Fra\"iss\'e. So for example, under the weaker hypothesis of density of $\mathrm{Isom}^0(E)_{\mathcal U}$, we obtain 
the weaker \textbf{AF} property for $E$.

\section{The Fra\"iss\'e property and $\omega$-categorical spaces}

\subsection{Some pseudometrics on $S_E^n$}

 Fixing a Banach space $E$ and $n\in\mathbb N$, we denote by $(T,x) \mapsto T\cdot x$ the usual (isometric) action of ${\rm Isom}(E)$ on $S_E^n$ defined in \eqref{action on S_E}, i.e. 
 \begin{gather}
    (T,(x_1,\ldots,x_n))\mapsto(Tx_1,\ldots,Tx_n).
\end{gather}
On $E^n$ we consider the $\ell_2$-sum norm. 

\begin{definition}\label{def51}
 If $x,y\in S_E^n$, we set
 \begin{gather*}
  d(x,y)=\inf_{T \in {\rm Isom}(E)}\|Tx-y\|.
 \end{gather*}
 \end{definition}

 If necessary we denote  by $\tilde{d}$ the induced metric on the quotient $Q_n$ of $S_E^n$ by the relation $x\sim y\Longleftrightarrow d(x,y)=0$. From Lemma \ref{pseudometric complete induced by G} we have:

  \begin{fact}\label{completeness of d}
  $d$ is a complete pseudometric. Consequently, $\tilde d$ is a complete metric.
 \end{fact}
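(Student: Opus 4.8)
The plan is to derive Fact~\ref{completeness of d} directly from Lemma~\ref{pseudometric complete induced by G} by checking that its hypotheses are met in the present setting. First I would observe that $(S_E^n, \|\cdot\|)$, where $\|\cdot\|$ denotes the restriction of the $\ell_2$-sum norm on $E^n$, is a complete metric space: it is a closed subset of the complete metric space $E^n$, being the preimage of $\{(1,\dots,1)\}$ under the continuous map $(x_1,\dots,x_n)\mapsto(\|x_1\|,\dots,\|x_n\|)$.

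Next I would let $G = {\rm Isom}(E)$ act on $X = S_E^n$ via the action in \eqref{action on S_E}; this is a genuine group action since the identity acts trivially and $(ST)\cdot x = S\cdot(T\cdot x)$. The crucial point is that this action is isometric for the chosen norm: for $T\in{\rm Isom}(E)$ and $x,y\in S_E^n$ we have $\|T\cdot x - T\cdot y\| = \|(T(x_1-y_1),\dots,T(x_n-y_n))\| = (\sum_i \|T(x_i-y_i)\|^2)^{1/2} = (\sum_i \|x_i-y_i\|^2)^{1/2} = \|x-y\|$, using that each $T$ is a surjective isometry of $E$. Hence the metric $\|\cdot\|$ on $S_E^n$ is $G$-invariant.

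With these two facts in hand, Lemma~\ref{pseudometric complete induced by G} applies verbatim with $(X,d) = (S_E^n,\|\cdot\|)$ and $G = {\rm Isom}(E)$, and it tells us precisely that $\rho(x,y) = \inf_{T\in G}\|T\cdot x - y\|$ is a complete pseudometric. But this $\rho$ is exactly the function $d$ of Definition~\ref{def51}, so $d$ is a complete pseudometric. The statement about $\tilde d$ then follows from a standard observation: if one quotients a complete pseudometric space by the equivalence relation identifying points at pseudodistance zero, the induced metric on the quotient is complete — a Cauchy sequence of classes lifts to a Cauchy sequence of representatives (the pseudodistance between classes equals the pseudodistance between any chosen representatives), which converges by completeness of $d$, and its class is the limit.

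There is essentially no obstacle here; the only thing requiring a moment's care is verifying $G$-invariance of the norm, which is where the hypothesis ``take a norm on $E^n$ such that the action is isometric'' from the introduction is used — the $\ell_2$-sum is the canonical such choice. Everything else is a direct citation of Lemma~\ref{pseudometric complete induced by G} and the routine passage to the quotient metric.
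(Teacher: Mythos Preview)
Your proposal is correct and follows exactly the route the paper indicates: the paper simply states that Fact~\ref{completeness of d} follows from Lemma~\ref{pseudometric complete induced by G}, and you have spelled out precisely why the hypotheses of that lemma are satisfied (completeness of $S_E^n$ and $G$-invariance of the $\ell_2$-sum norm) together with the routine passage to the quotient metric.
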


On $S_E^n$, by analogy with the Banach-Mazur pseudometric, we also consider the following pseudometric.

\begin{definition}
 If $x,y\in S_E^n$, we set
 \begin{gather*}
  d_{\mathrm{BM}}(x,y)=\log \|A\| \|A^{-1}\|,
 \end{gather*}
if there exists (a necessarily unique) linear invertible map $A$ from $[x]$ to $[y]$ with $A(x_i)=y_i$ for all $i=1,\ldots,n$,
and
$d_{\mathrm{BM}}(x,y)=+\infty$
otherwise. 
\end{definition} 

Let
$\tilde{d}_{\mathrm{BM}}$ denote the distance induced on the quotient of $S_E^n$ by the relation $x\sim_{BM} y\Longleftrightarrow d_{\mathrm{BM}}(x,y)=0$. Finally we also consider a third and less classical pseudometric:

\begin{definition}
 If $x,y\in S_E^n$, we set
 \begin{gather*}
 d^a(x,y)=\sup_{\delta>0}\inf_{T \in {\rm Isom}_\delta(E)}\|Tx-y\|.
 \end{gather*} 
  
 \end{definition}

\begin{lemma}
The function $d^a$ is a pseudometric on $S_E^n$.
\end{lemma}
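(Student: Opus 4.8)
The plan is to verify the three pseudometric axioms for $d^a$ directly from the definition $d^a(x,y)=\sup_{\delta>0}\inf_{T\in\mathrm{Isom}_\delta(E)}\|Tx-y\|$. Symmetry is the only nontrivial one among reflexivity and symmetry, and it follows because $\mathrm{Isom}_\delta(E)$ is closed under taking inverses: for $T\in\mathrm{Isom}_\delta(E)$ we have $T^{-1}\in\mathrm{Isom}_\delta(E)$, and $\|Tx-y\|=\|T(x-T^{-1}y)\|$, which up to the factor $(1+\delta)$ equals $\|x-T^{-1}y\|$; so $\inf_{T\in\mathrm{Isom}_\delta(E)}\|Tx-y\|$ and $\inf_{T\in\mathrm{Isom}_\delta(E)}\|Ty-x\|$ differ by at most a factor $1+\delta$, and passing to the supremum over $\delta\to 0$ gives $d^a(x,y)=d^a(y,x)$. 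Reflexivity $d^a(x,x)=0$ is immediate by taking $T=\mathrm{Id}$ for every $\delta$.

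The real work is the triangle inequality $d^a(x,z)\le d^a(x,y)+d^a(y,z)$, and the key step is a composition-of-approximate-isometries estimate together with the elementary fact that an element of $\mathrm{Isom}_\delta(E)$ distorts norms by at most a factor $1+\delta$. Given $\varepsilon>0$ and a small parameter $\delta>0$, I would first choose $\delta'>0$ small enough that $(1+\delta')^2\le 1+\delta$ (this is the same bookkeeping used, e.g., in Proposition~\ref{equivalent definitions of Fraisse} and in the proof that $D_E^a$ is a pseudometric). Pick $T\in\mathrm{Isom}_{\delta'}(E)$ with $\|Tx-y\|\le d_{\delta'}$-type bound $\le d^a(x,y)+\varepsilon$ and $S\in\mathrm{Isom}_{\delta'}(E)$ with $\|Sy-z\|\le d^a(y,z)+\varepsilon$. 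Then $S\circ T\in\mathrm{Isom}_{\delta}(E)$ and
\[
\|(S T)x-z\|\le \|S(Tx-y)\|+\|Sy-z\|\le (1+\delta')\|Tx-y\|+\|Sy-z\|.
\]
Hence $\inf_{U\in\mathrm{Isom}_\delta(E)}\|Ux-z\|\le (1+\delta')(d^a(x,y)+\varepsilon)+(d^a(y,z)+\varepsilon)$. Taking $\varepsilon\to 0$ and then $\delta\to 0$ (so $\delta'\to 0$ as well) and using that $d^a(x,z)=\sup_{\delta>0}\inf_{U\in\mathrm{Isom}_\delta(E)}\|Ux-z\|$ is the limit as $\delta\to0$, we obtain $d^a(x,z)\le d^a(x,y)+d^a(y,z)$.

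One subtlety to handle carefully: the inner infimum $\inf_{T\in\mathrm{Isom}_\delta(E)}\|Tx-y\|$ is monotone nonincreasing in $\delta$, so $\sup_{\delta>0}$ equals $\lim_{\delta\to 0^+}$; I would record this monotonicity explicitly, since it is what lets me pass from the fixed-$\delta$ estimate to the supremum. Also, since we are allowing the value $+\infty$ a priori is not an issue here because $\mathrm{Id}\in\mathrm{Isom}_\delta(E)$ always makes the inner infimum at most $\|x-y\|<\infty$, so $d^a$ is real-valued and finite on $S_E^n$. The main (very mild) obstacle is simply organizing the two $\varepsilon$'s and the $\delta$/$\delta'$ interplay so that all limits can be taken in the correct order; there is no conceptual difficulty, and the argument parallels the proof given earlier that $D_E^a$ is a pseudometric on $\mathrm{Age}(E)$.
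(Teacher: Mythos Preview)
Your proof is correct and follows essentially the same route as the paper: symmetry via inverse--closure of $\mathrm{Isom}_\delta(E)$ together with the $(1+\delta)$ distortion factor, and the triangle inequality via composing two $\delta'$-isometries into a $\delta$-isometry (with $(1+\delta')^2\le 1+\delta$) and then letting the parameters tend to zero. The paper merely records the resulting estimate $d_a^\delta(x,y)+d_a^\delta(z,y)\ge (1+\delta)^{-1}d_a^{\delta^2+2\delta}(x,z)$ and leaves its verification to the reader, whereas you spell out the composition and limit steps explicitly; the content is the same.
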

\begin{proof}
Let us define $\displaystyle d_a^\delta(x,y):=\inf_{T \in {\rm Isom}_\delta(E)}
  \|Tx-y\|$. By using that ${\rm Isom}_\delta(E)$ is invariant under taking inverses, we obtain that
  $(1+\delta)^{-1}d_a^\delta(x,y) \leq d_a^\delta(y,x) \leq
  (1+\delta)d_a^\delta(x,y)$ and therefore that $d_a$ is symmetric. The triangle inequality follows from the  estimate 
  $$d_a^\delta(x,y)+d_a^\delta(z,y) \geq (1+\delta)^{-1} 
  d_a^{\delta^2+2\delta}(x,z),$$
  which we leave to the reader as an exercise.
\end{proof}

 When necessary we denote by $\tilde{d}_a$ the induced distance on the quotient $Q_n^a$ of $S_E^n$ by the relation
 $x \sim^a y \Leftrightarrow d_a(x,y)=0$.
 
 \
 
 We observe the following immediate relations between these pseudometrics and corresponding ultrahomogeneity properties.
\begin{fact}\label{ultrahomogeneity and the metric d_BM}
\
\begin{itemize}
 \item[(i)] If $x,y\in S_E^n$ then $d_a(x,y) \leq d(x,y)$.
 \item[(ii)] If $x,y\in S_E^n$ and $d_a(x,y)=0$, then $d_{\mathrm{BM}}(x,y)=0$.
\item[(iii)] $E$ is \textbf{AUH} if and only if whenever $n\in\mathbb N$ and $x,y\in S_E^n$ satisfy $d_{\mathrm{BM}}(x,y)=0$, we have $d(x,y)=0$.
\item[(iv)] $E$ is almost \textbf{UH} if and only if whenever $n\in\mathbb N$ and $x,y\in S_E^n$ satisfy $d_{\mathrm{BM}}(x,y)=0$, we have $d_a(x,y)=0$.
\end{itemize}
\end{fact}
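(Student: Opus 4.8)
The plan is to verify the four items in order. Items (i) and (ii) are essentially formal; (iii) and (iv) share a common structure; and the only genuinely delicate point is the second implication in (iv). \emph{Items (i) and (ii).} For (i), since $\mathrm{Isom}(E)\subseteq\mathrm{Isom}_\delta(E)$ for every $\delta>0$, we have $d_a^\delta(x,y)\le d(x,y)$ for each $\delta$, and taking the supremum over $\delta$ gives $d_a(x,y)\le d(x,y)$. For (ii), if $d_a(x,y)=0$ then $d_a^\delta(x,y)=0$ for every $\delta>0$, so for each $n$ one may choose $T_n\in\mathrm{Isom}_{1/n}(E)$ with $\|T_nx-y\|<1/n$; since $E^n$ carries the $\ell_2$-sum norm this forces $T_nx_i\to y_i$ for all $i$. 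I would then check that $x_i\mapsto y_i$ defines a linear bijection $A\colon[x]\to[y]$: a relation $\sum_i a_ix_i=0$ gives $\sum_i a_iy_i=\lim_n T_n(\sum_i a_ix_i)=0$, while a relation $\sum_i a_iy_i=0$ gives $\|T_n(\sum_i a_ix_i)\|\le\sum_i|a_i|\,\|T_nx_i-y_i\|\to0$, hence $\sum_i a_ix_i=0$ using $\|z\|\le(1+1/n)\|T_nz\|$. Passing to the limit in $(1+1/n)^{-1}\|z\|\le\|T_nz\|\le(1+1/n)\|z\|$ shows $A$ is an isometry, so $d_{\mathrm{BM}}(x,y)=\log1=0$.

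\emph{A common observation, and item (iii).} Both (iii) and (iv) rest on the remark that $d_{\mathrm{BM}}(x,y)=0$ if and only if the correspondence $x_i\mapsto y_i$ extends to a surjective linear isometry $A\colon[x]\to[y]$: indeed $\|A\|\,\|A^{-1}\|=1$ forces $\|Az\|=\|A\|\,\|z\|$ for all $z$, and evaluating at $x_1$, where $\|x_1\|=\|Ax_1\|=\|y_1\|=1$, gives $\|A\|=1$. For (iii)($\Rightarrow$): if $E$ is \textbf{AUH} and $d_{\mathrm{BM}}(x,y)=0$, then the inclusion of $[x]$ and the isometry $A$ both lie in $\mathrm{Emb}([x],E)$, so \textbf{AUH} provides, for every $\eta>0$, some $T\in\mathrm{Isom}(E)$ with $\|T|_{[x]}-A\|<\eta$; evaluating at the unit vectors $x_i$ gives $\|Tx-y\|<\eta\sqrt n$, so $d(x,y)=0$. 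For (iii)($\Leftarrow$): given $X\in\mathrm{Age}(E)$, $\phi,\psi\in\mathrm{Emb}(X,E)$ and $\varepsilon>0$, fix a normalized basis $e_1,\dots,e_k$ of $X$ and set $x=(\phi e_i)_i$, $y=(\psi e_i)_i\in S_E^k$; then $\psi\circ\phi^{-1}$ is an isometry of $\phi(X)$ onto $\psi(X)$ carrying $\phi e_i$ to $\psi e_i$, so $d_{\mathrm{BM}}(x,y)=0$ and hence $d(x,y)=0$ by hypothesis, and for $T\in\mathrm{Isom}(E)$ with $\|Tx-y\|$ small enough, equivalence of norms on $X$ yields $\|T\phi-\psi\|<\varepsilon$, which is \textbf{AUH}.

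\emph{Item (iv).} For the forward direction, if $E$ is almost \textbf{UH} and $d_{\mathrm{BM}}(x,y)=0$ with associated isometry $A\colon[x]\to[y]\subseteq E$, then for each $\delta>0$ almost \textbf{UH} applied to $A\in\mathrm{Emb}([x],E)$ gives $T\in\mathrm{Isom}_\delta(E)$ with $T|_{[x]}=A$, so $Tx_i=y_i$ and $\|Tx-y\|=0$; thus $d_a^\delta(x,y)=0$ for all $\delta$, i.e. $d_a(x,y)=0$. The converse is the main obstacle. Given $X\in\mathrm{Age}(E)$, $\phi\in\mathrm{Emb}(X,E)$ and $\varepsilon>0$, with a normalized basis $e_1,\dots,e_k$ of $X$ and $x=(e_i)_i$, $y=(\phi e_i)_i$, we have $d_{\mathrm{BM}}(x,y)=0$, hence $d_a(x,y)=0$; so for a small $\delta>0$ we obtain $T\in\mathrm{Isom}_\delta(E)$ with $\|Tx-y\|$ as small as we wish — but $T|_X$ only approximates $\phi$, whereas almost \textbf{UH} requires an exact extension. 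To remedy this I would fix a projection $P_X\colon E\to X$ with $\|P_X\|\le k$ and replace $T$ by $T'=T-(T|_X-\phi)\circ P_X$; then $T'|_X=\phi$ and $\|T'-T\|\le k\,\|T|_X-\phi\|\le kC\,\|Tx-y\|$, where $C$ depends only on $X$ and the chosen basis. A Neumann-series estimate then shows that if $\delta$ and $\|Tx-y\|$ are small enough in terms of $\varepsilon$, then $T'$ is a bijection with $\|T'\|\le1+\varepsilon$ and $\|(T')^{-1}\|\le1+\varepsilon$, so $T'\in\mathrm{Isom}_\varepsilon(E)$ and $T'|_X=\phi$, establishing almost \textbf{UH}. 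This conversion of an approximate extension into an exact one with controlled distortion — the device already used in Remark~\ref{AUH implies UH}, Lemma~\ref{continuity of BM respect d_E}(2) and Proposition~\ref{Fraisse implica AF} — is where essentially all the care in the argument is concentrated.
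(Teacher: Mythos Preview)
Your proof is correct. The paper states this Fact without proof, merely introducing it as ``immediate relations''; your detailed verification is therefore entirely appropriate, and in particular the converse in (iv), which you rightly identify as the only point needing care, does indeed require the projection-perturbation device you invoke (as in Remark~\ref{AUH implies UH} and Proposition~\ref{Fraisse implica AF}).
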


\begin{notation}
Let ${\mathcal R}=\{R_1,\ldots,R_k\}$ be any list of  relations of linear dependence between the elements of an $n$-uple $(x_1,\ldots,x_n)$ of $S_E^n$ such that none of the relations of the list is consequence of the others.
We denote by $(S_E^n)_{\mathcal R}$ the set of $n$-uples of $S_E^n$ satisfying ${\mathcal R}$ and no additional relation of linear dependence. In particular $(S_E^n)_{\emptyset}$ is the set of $n$-uples of $S_E^n$ which are linearly independent.
\end{notation}

The point of this technical notation is that in order that $d_{\mathrm{BM}}(x,y)<+\infty$, $x$ and $y$ must belong to a same set $(S_E^n)_{\mathcal R}$. So this formalization will help us deal with discontinuities of $d_{\mathrm{BM}}$ with respect to $d$. As an example one may think of a sequence of couples $(x_1,x_k)_k$ with $(x_k)_k$ tending to $x_1$ and $x_k \neq x_1$. Then $(x_1,x_k)_k$  tends to $(x_1,x_1)$ with respect to $d$ but not with respect to $d_{\mathrm{BM}}$.

\begin{fact} \label{conti}
 Given any ${\mathcal R}$ as above, the map
 ${\rm Id}\colon ((S_E^n)_{\mathcal R},d)
 \rightarrow ((S_E^n)_{\mathcal R},d_{\mathrm{BM}})$ is continuous.
\end{fact}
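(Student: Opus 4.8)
The plan is to fix a list $\mathcal{R}$ of linear dependence relations and show that the identity map $((S_E^n)_{\mathcal R},d)\to((S_E^n)_{\mathcal R},d_{\mathrm{BM}})$ is continuous at an arbitrary point $x=(x_1,\ldots,x_n)\in(S_E^n)_{\mathcal R}$. The key point is that, on the fixed stratum $(S_E^n)_{\mathcal R}$, the map which sends a tuple $y$ to the (unique) linear map $A_{x,y}\colon[x]\to[y]$ with $A_{x,y}(x_i)=y_i$ has a uniformly controlled dependence on $\|y-x\|$: write $m\coloneqq\dim[x]$, pick a sub-tuple $x_{i_1},\ldots,x_{i_m}$ forming an Auerbach-type basis of $[x]$, so that the coordinate functionals are bounded by $1$ and the expressions of the remaining $x_j$ in this basis have coefficients bounded by some constant $c=c(x)$; the relations defining $\mathcal{R}$ guarantee the corresponding sub-tuple of $y$ is still a basis of $[y]$ for $y$ close enough to $x$, and the linear algebra is stable under small perturbations.

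First I would reduce to comparing $x$ with a tuple $y'=Tx$ where $T\in\mathrm{Isom}(E)$ and $\|Tx-x\|$ is as small as we please: indeed $d_{\mathrm{BM}}$ is invariant under the $\mathrm{Isom}(E)$-action on both arguments, so if $d(x,y)$ is small, choosing $T$ with $\|Tx-y\|$ close to $d(x,y)$ gives $d_{\mathrm{BM}}(x,y)=d_{\mathrm{BM}}(Tx,y)$ with $\|Tx-y\|$ small; but actually it is cleaner to work directly: given $\varepsilon>0$ we want $\eta>0$ so that $\|y-x\|<\eta$ (note $d(x,y)\le\|y-x\|$ is the wrong direction; rather $d$-closeness of $y$ to $x$ means some $Ty$ is $\|{\cdot}\|$-close to $x$, and replacing $y$ by $Ty$ changes neither $d_{\mathrm{BM}}(x,y)$ nor membership in $(S_E^n)_{\mathcal R}$) implies $d_{\mathrm{BM}}(x,y)<\varepsilon$. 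So the real content is: there is a function $\eta\mapsto\omega(\eta)$ with $\omega(\eta)\to0$ as $\eta\to0$ such that $\|y_i-x_i\|<\eta$ for all $i$ forces $y\in(S_E^n)_{\mathcal R}$ (for $\eta$ small) and $\log(\|A_{x,y}\|\,\|A_{x,y}^{-1}\|)\le\omega(\eta)$.

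The estimate itself: for $z=\sum_{l=1}^m a_l x_{i_l}\in[x]$ with $\|z\|\le 1$ one has $|a_l|\le\|x_{i_l}^*\|\le K$ for a constant $K=K(x)$ coming from the chosen basis, hence $\|A_{x,y}z-z\|=\|\sum_l a_l(y_{i_l}-x_{i_l})\|\le mK\eta$, giving $\|A_{x,y}\|\le 1+mK\eta$; and for $\eta$ small enough $\|A_{x,y}^{-1}\|\le(1-mK\eta)^{-1}$ by a Neumann-series / perturbation-of-identity argument (this is exactly the computation underlying Lemma \ref{continuity of BM respect d_E}(1), applied to the subspaces $[x]$ and $[y]$ rather than to elements of $\mathrm{Age}_k(E)$ with the Hausdorff metric). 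For the dependent coordinates $j\notin\{i_1,\ldots,i_m\}$ one writes $x_j=\sum_l b_l^{(j)}x_{i_l}$ with $|b_l^{(j)}|\le c$, and then $\|A_{x,y}x_j-y_j\|$ is automatically $0$ by definition of $A_{x,y}$ but one must check $y_j$ genuinely lies in $[y_{i_1},\ldots,y_{i_m}]$ with the same pattern of dependence — this is where the relations $\mathcal{R}$ being exactly the relations satisfied by $x$ (and no others) is used, together with the fact that $y$ is assumed to lie in $(S_E^n)_{\mathcal R}$, or rather that $y$ so close to $x$ must lie there.

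The main obstacle I expect is precisely the bookkeeping around strata: one must argue that for $y$ with $\|y_i-x_i\|<\eta$ and $\eta$ small, the tuple $y$ necessarily belongs to $(S_E^n)_{\mathcal R}$ (not a different stratum), so that $d_{\mathrm{BM}}(x,y)<\infty$ to begin with and the argument is not vacuous — equivalently, that $(S_E^n)_{\mathcal R}$ is $d$-open, or at least that its points are $d$-interior within $\bigcup_{\mathcal R'\supseteq\mathcal R}(S_E^n)_{\mathcal R'}$. This follows from openness of the set of bases: the sub-tuple $y_{i_1},\ldots,y_{i_m}$ stays linearly independent under small perturbation, so $\dim[y]\ge m$, while the linear relations $R_1,\ldots,R_k$ satisfied by $x$ are closed conditions, hence if we only demand that $y$ be a perturbation of $x$ within some ambient stratum the correct one is forced; strictly, the statement is about continuity of ${\rm Id}$ \emph{restricted} to $(S_E^n)_{\mathcal R}$ as both domain and codomain, so we may simply assume $y\in(S_E^n)_{\mathcal R}$ and the only remaining task is the quantitative norm estimate above, which goes through verbatim. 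I would finish by noting that $d(x,y)<\eta$ (rather than $\|y-x\|<\eta$) suffices because one replaces $y$ by $Ty$ for suitable $T\in\mathrm{Isom}(E)$, leaving $d_{\mathrm{BM}}(x,y)$ and the stratum unchanged, and then applies the estimate to $x$ and $Ty$.
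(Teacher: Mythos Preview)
Your proposal is correct and follows essentially the same route as the paper's proof: reduce from $d$-closeness to $\|\cdot\|$-closeness via the ${\rm Isom}(E)$-invariance of $d_{\mathrm{BM}}$, choose a sub-tuple $(x_i)_{i\in I}$ forming a basis of $[x]$, use a small-perturbation estimate to see that $(y_i)_{i\in I}$ is still a basis and that $x_i\mapsto y_i$ defines a $(1+\varepsilon)$-isomorphism, and then invoke the shared relations $\mathcal{R}$ to conclude this map sends every $x_j$ to $y_j$. The paper compresses the perturbation step into the phrase ``classical estimates'' and the stratum issue into ``since both $x$ and $y$ belong to $(S_E^n)_{\mathcal R}$''; your write-up makes both of these explicit (and correctly observes that since continuity is asserted only on the restricted domain $(S_E^n)_{\mathcal R}$, one may simply assume $y\in(S_E^n)_{\mathcal R}$ rather than prove the stratum is open).
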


\begin{proof} Let 
$\varepsilon>0$ be given. 
  Fix $(x_i)_{i \in I}$ a basis of $[x_1,\ldots,x_n]$ with constant $K$.  If $d(x,y)<\alpha$, without loss of generality we may assume $\|x-y\| \leq \alpha$. Classical estimates guarantee that if $\alpha$ was small enough, then 
  $(y_i)_{i \in I}$ is a $2K$-basis of $[y_i]$ and that the map $t$ defined by $x_i \mapsto y_i$, $i \in I$, is a $(1+\varepsilon)$-isomorphism.
  Since both $x$ and $y$ belong to $(S_E^n)_{\mathcal R}$, 
  this maps sends $x_i$ to $y_i$ for all the other value of $i$ as well, so
  $d_{\mathrm{BM}}(x,y) \leq \varepsilon$.
\end{proof}

We note that uniform continuity holds if we restrict to a subset where we control the basis constant of $(x_i)_{i \in I}$:
for $K \geq 1$, let $(S_E^n)_K$ be the set of $x \in S_E^n$ which are a basic sequence with constant at most $K$.
 Note that $(S_E^n)_K$ is $d_{\mathrm{BM}}$-closed and therefore $d$-closed. Hence:
 
 \begin{fact}\label{continuity uniform of Id}
 Given $n \in \mathbb N$, given any $K \geq 1$, the map
 ${\rm Id}\colon ((S_E^n)_{K},d)
 \rightarrow ((S_E^n)_{K},d_{\mathrm{BM}})$ is uniformly continuous.
 \end{fact}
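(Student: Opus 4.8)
The plan is to upgrade Fact \ref{conti} by quantifying the estimates uniformly over $(S_E^n)_K$. First I would fix $K\geq 1$ and $\varepsilon>0$, and observe that the role of the basis constant in the proof of Fact \ref{conti} is precisely what prevents a uniform choice of $\alpha$; once we restrict attention to $x\in(S_E^n)_K$, the basis $(x_i)_{i\in I}$ (selected from the coordinates forming a maximal linearly independent subtuple determined by the relations $\mathcal R$ satisfied by $x$) has constant at most $K$ by hypothesis, so the ``classical estimates'' become uniform: there is $\alpha=\alpha(n,K,\varepsilon)>0$, independent of $x$, so that if $y\in S_E^n$ and $\|x-y\|\leq\alpha$, then $(y_i)_{i\in I}$ is a $2K$-basic sequence and the linear map $t\colon x_i\mapsto y_i$ is a $(1+\varepsilon)$-isomorphism of $[x]$ onto $[y]$. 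Since both $x$ and $y$ lie in the same stratum $(S_E^n)_{\mathcal R}$, all remaining coordinates are determined by the same linear relations, so $t(x_j)=y_j$ for all $j\in\{1,\dots,n\}$, whence $d_{\mathrm{BM}}(x,y)\leq 2\log(1+\varepsilon)\leq\varepsilon$ after adjusting constants.

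The one subtlety is that $d(x,y)<\alpha$ only gives an isometry $T\in\mathrm{Isom}(E)$ with $\|Tx-y\|<\alpha$, not $\|x-y\|<\alpha$ directly; but $d_{\mathrm{BM}}$ is $\mathrm{Isom}(E)$-invariant (an isometry of $E$ induces a $1$-isomorphism between the corresponding spans), so $d_{\mathrm{BM}}(x,y)=d_{\mathrm{BM}}(Tx,y)$, and we may replace $x$ by $Tx$; this is legitimate because $(S_E^n)_K$ is $\mathrm{Isom}(E)$-invariant, so $Tx$ is still a $K$-basic sequence in the same stratum $(S_E^n)_{\mathcal R}$. Thus the reduction ``without loss of generality $\|x-y\|\leq\alpha$'' goes through exactly as in Fact \ref{conti}, now with $\alpha$ depending only on $n,K,\varepsilon$. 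This establishes uniform continuity of ${\rm Id}\colon((S_E^n)_K,d)\to((S_E^n)_K,d_{\mathrm{BM}})$.

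The main obstacle — really the only point requiring care — is verifying that the stratification by $(S_E^n)_{\mathcal R}$ interacts correctly with the basis constant bound: a priori a tuple in $(S_E^n)_K$ could satisfy various relations $\mathcal R$, and one must be sure that the index set $I$ of the chosen basis can be taken the same for $x$ and for all nearby $y$. This is handled by the fact that membership in a fixed stratum $(S_E^n)_{\mathcal R}$ pins down exactly which subtuples are independent, so $I$ is determined by $\mathcal R$ alone and does not vary with the point; the $K$-basic-sequence hypothesis then applies uniformly to $(x_i)_{i\in I}$. Since $(S_E^n)_K$ is a finite union over the possible lists $\mathcal R$ of the sets $(S_E^n)_K\cap(S_E^n)_{\mathcal R}$, and the relevant estimate is uniform on each such piece with a modulus depending only on $n,K,\varepsilon$, we obtain a single modulus valid on all of $(S_E^n)_K$ by taking the minimum over the finitely many strata. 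The closedness of $(S_E^n)_K$ noted just before the statement is what guarantees these are genuine (complete) metric subspaces, so uniform continuity is the right notion. This completes the proof.
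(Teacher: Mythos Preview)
Your proof is correct and follows the paper's approach: the estimates in Fact \ref{conti} become uniform once the basis constant is controlled. Note however that by definition $(S_E^n)_K$ consists of $K$-basic sequences, which are automatically linearly independent, so $(S_E^n)_K\subseteq(S_E^n)_\emptyset$; hence your entire stratification discussion is unnecessary---there is only the single stratum $\mathcal R=\emptyset$ with $I=\{1,\ldots,n\}$, and the argument reduces to the direct observation in your first paragraph.
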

 
 For $n$ integer, we use the well-known fact that every $n$-dimensional space has a  basis with constant $\cn$ (since every $n$-dimensional space is $\sqrt{n}$ isomorphic to $\ell_2^n$).
 
 \begin{fact} The following statements are equivalent:
 \begin{itemize}
  \item[(1)]  The space $E$ is weak Fra\"iss\'e.
  \item[(2)] For any $n$, the map ${\rm Id}\colon ((S_E^n)_{2\cn},d)
 \rightarrow ((S_E^n)_{2\cn},d_{\mathrm{BM}})$ is an homeomorphism.
  \item[(3)] For any $n$, for any $K \geq 1$ the map ${\rm Id}\colon ((S_E^n)_{K},d)
 \rightarrow ((S_E^n)_{K},d_{\mathrm{BM}})$ is an homeomorphism.
  \item[(4)] For any $n$, the map ${\rm Id}\colon ((S_E^n)_{\emptyset},d)
 \rightarrow ((S_E^n)_{\emptyset},d_{\mathrm{BM}})$ is an homeomorphism.
 \end{itemize}
 \end{fact}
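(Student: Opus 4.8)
The plan is to establish the cycle $(1)\Rightarrow(4)\Leftrightarrow(3)\Rightarrow(2)\Rightarrow(1)$. In each of $(2)$, $(3)$, $(4)$ the map $\mathrm{Id}$ in the direction $(\cdot,d)\to(\cdot,d_{\mathrm{BM}})$ is already continuous, by Fact~\ref{conti} (with $\mathcal R=\emptyset$) and Fact~\ref{continuity uniform of Id}, so the only point to check in each is that $d_{\mathrm{BM}}$-smallness forces $d$-smallness on the set in question. For $(1)\Rightarrow(4)$: let $x\in(S_E^n)_{\emptyset}$ and $\varepsilon>0$, put $X=[x_1,\dots,x_n]\in\mathrm{Age}_n(E)$, and let $\delta'>0$ witness the weak Fra\"iss\'e property (Definition~\ref{definition Fraisse}) for $X$ and $\varepsilon/\cn$; choose $\delta>0$ with $e^{\delta}-1\le\delta'$. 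If $d_{\mathrm{BM}}(x,y)<\delta$, then (since $x_i\mapsto y_i$ defines a linear isomorphism between $n$-dimensional spaces) $y\in(S_E^n)_{\emptyset}$ and the map $A\colon X\to[y_1,\dots,y_n]\subseteq E$, $Ax_i=y_i$, satisfies $\|A\|,\|A^{-1}\|\le e^{\delta}$ --- both norms are $\ge1$ because $A$ sends unit vectors to unit vectors --- so $A\in\mathrm{Emb}_{\delta'}(X,E)$, as does the inclusion $\iota_X$. Weak Fra\"iss\'e-ness gives $T\in\mathrm{Isom}(E)$ with $\|T\circ\iota_X-A\|<\varepsilon/\cn$, so $\|Tx_i-y_i\|<\varepsilon/\cn$ for every $i$, whence $d(x,y)\le\|Tx-y\|<\varepsilon$ (with the $\ell_2$-sum norm on $E^n$). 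Together with the forward continuity this makes $\mathrm{Id}$ a homeomorphism on $(S_E^n)_{\emptyset}$.

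For $(4)\Leftrightarrow(3)$ and $(3)\Rightarrow(2)$: since $(S_E^n)_{\emptyset}=\bigcup_{K\ge1}(S_E^n)_K$ and both pseudometrics restrict, $(4)\Rightarrow(3)$ is immediate and $(3)\Rightarrow(2)$ is the case $K=2\cn$. For $(3)\Rightarrow(4)$, the perturbation estimate already used in the proof of Fact~\ref{conti} shows that if $x\in(S_E^n)_{\emptyset}$ has basis constant $K$, then any $y$ with $d_{\mathrm{BM}}(x,y)$ small enough has basis constant at most $2K$; hence if $d_{\mathrm{BM}}(x_m,x)\to0$ in $(S_E^n)_{\emptyset}$, for large $m$ the points $x_m$ and $x$ all lie in $(S_E^n)_{2K}$, and $(3)$ applied with $2K$ gives $d(x_m,x)\to0$.

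For $(2)\Rightarrow(1)$: fix $X\in\mathrm{Age}(E)$ with $\dim X=n$, and $\varepsilon>0$. Since $X$ is $\cn$-isomorphic to $\ell_2^n$, transporting the unit vector basis of $\ell_2^n$ and then normalizing (which leaves the partial-sum projections, hence the basis constant, unchanged) gives a normalized basis $e=(e_1,\dots,e_n)$ of $X$ with basis constant $\le\cn$, so $e\in(S_E^n)_{2\cn}$. For $\phi,\psi\in\mathrm{Emb}_{\delta}(X,E)$ with $\delta$ small, set $\phi_*=\bigl(\phi e_i/\|\phi e_i\|\bigr)_i$ and $\psi_*=\bigl(\psi e_i/\|\psi e_i\|\bigr)_i$; these lie in $(S_E^n)_{2\cn}$, as the basis constant of $(\phi e_i)$ in $\phi(X)$ is at most $(1+\delta)^2\cn\le2\cn$, and $d_{\mathrm{BM}}(e,\phi_*),d_{\mathrm{BM}}(e,\psi_*)\to0$ as $\delta\to0$, because $e_i\mapsto\phi e_i/\|\phi e_i\|$ equals $\phi$ followed by the diagonal operator on $\phi(X)$ that normalizes the $\phi e_i$, which tends to the identity as $\delta\to0$. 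By $(2)$, $\mathrm{Id}^{-1}$ is continuous at $e$ inside $(S_E^n)_{2\cn}$, so $d(\phi_*,\psi_*)$ is as small as we like once $\delta$ is small; choosing $T\in\mathrm{Isom}(E)$ with $\|T\phi_*-\psi_*\|$ small, undoing the normalization (each $\|\phi e_i\|,\|\psi e_i\|$ is within $\delta$ of $1$), and expanding an arbitrary $u\in B_X$ in the basis $(e_i)$, whose coordinate functionals have norm $\le2\cn$, yields $\|T\circ\phi-\psi\|<\varepsilon$ for $\delta$ small enough. This is the weak Fra\"iss\'e property.

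The forward continuity being supplied by Facts~\ref{conti} and~\ref{continuity uniform of Id}, the whole content lies in the inverse direction, and concretely in the translation between near-isometric embeddings of a fixed finite-dimensional space and normalized $n$-tuples of $S_E^n$. I expect $(2)\Rightarrow(1)$ to be the delicate step: one must run the argument through a basis of \emph{controlled} basis constant (provided by the $\cn$-Euclidean embedding) in order to remain inside $(S_E^n)_{2\cn}$ and to keep the coordinate functionals uniformly bounded, while tracking the normalization and unnormalization losses; the perturbation stability of the basis constant needed for $(3)\Leftrightarrow(4)$ is the other bookkeeping point, but it is essentially contained in the proof of Fact~\ref{conti}.
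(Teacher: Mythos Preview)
Your proof is correct and follows essentially the same cycle $(1)\Rightarrow(4)\Rightarrow(3)\Rightarrow(2)\Rightarrow(1)$ as the paper, with the same key idea of translating between $\delta$-embeddings of $X$ and $n$-tuples in $(S_E^n)_K$. You are in fact more careful than the paper in two places: you explicitly normalize $\phi e_i$ in $(2)\Rightarrow(1)$ (the paper writes $y=(tx_1,\ldots,tx_n)\in (S_E^n)_{2\sqrt{n}}$ without addressing that $tx_i$ need not be unit vectors), and you supply the auxiliary $(3)\Rightarrow(4)$, which the paper does not need for the cycle but which you use to state $(3)\Leftrightarrow(4)$.
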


 \begin{proof}
 $(3) \Rightarrow (2)$ is obvious. $(4) \Rightarrow (3)$ also holds because 
 $(S_E^n)_{\emptyset}=\cup_{K \geq 1} (S_E^n)_K$.
   $(2) \Rightarrow (1)$: 
  Fix $X \in\mathrm{Age}(E)$ and $x=(x_1,\ldots,x_n) \in S_E^n$  a $\cn$-basis of $X$. 
  If $t$ is an $(1+\alpha)$-isometric embedding of $[x]$ into $E$, let
  $y=(y_1,\ldots,y_n)=(tx_1,\ldots,tx_n)$, which belongs to $(S_E^n)_{2\cn}$ if $\alpha$ was chosen small enough.  Then we deduce from $d_{\mathrm{BM}}(x,y)<\alpha$ that $d(x,y)<\varepsilon$, i.e. there exists $T \in {\rm Isom}(E)$ so that 
  $\|Tx-y\| <\varepsilon$,
  so $\|T|_{[x]}-t\|$ is small if $\varepsilon$ was well chosen.
  
  $(1) \Rightarrow (4)$: because of  Fact \ref{conti} for $\emptyset$, we just need to prove that for any $\varepsilon>0$,
   for any $x=(x_1,\ldots,x_n) \in (S_E^n)_{\emptyset}$,
  there exists $\alpha>0$, such that
  $d_{\mathrm{BM}}(x,y)<\alpha \Rightarrow d(x,y)<\varepsilon$.
  If $d_{\mathrm{BM}}(x,y)<\alpha$, i.e. there is $t$ an $(1+\alpha)$-isometric embedding of $[x]$ into $E$ defined by $tx_i=y_i$ and 
  if $\alpha$ was chosen small enough, then  there exists $T \in {\rm Isom}(E)$ so that $\|T|_{[x]}-t\|<\varepsilon$,
  therefore
  $\|Tx_i-y_i\| <\varepsilon$ for all $i$,
  and $\|Tx-y\| \leq n\varepsilon$, so $d(x,y)<n\varepsilon$.
 \end{proof}

 \begin{fact}\label{equivalence of Fraisseness}
 The following assertions are equivalent:
 \begin{itemize}
  \item[(1)]  The space $E$ is  Fra\"iss\'e.
  \item[(2)] For any $n\in\mathbb N$, the map ${\rm Id}\colon ((S_E^n)_{2\cn},d)
 \rightarrow ((S_E^n)_{2\cn},d_{\mathrm{BM}})$ is a uniform homeomorphism.
  \item[(3)] For any $n\in\mathbb N$, for any $K \geq 1$, the map ${\rm Id}\colon ((S_E^n)_{K},d)
 \rightarrow ((S_E^n)_{K},d_{\mathrm{BM}})$ is a uniform homeomorphism.
 \item[(4)] For any $n\in\mathbb N$, the map ${\rm Id}\colon ((S_E^n)_{\emptyset},d)
 \rightarrow ((S_E^n)_{\emptyset},d_{\mathrm{BM}})$ is a homeomorphism with uniformly continuous inverse.
 \end{itemize}
 \end{fact}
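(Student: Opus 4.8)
\textbf{Proof plan for Fact \ref{equivalence of Fraisseness}.}

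The plan is to mirror the previous Fact (the one characterizing weak Fra\"iss\'eness), upgrading every ``homeomorphism'' to ``uniform homeomorphism'' by keeping track of the $\delta$'s and $\varepsilon$'s uniformly in the point $x$ but with control depending on the dimension $n$ and basis constant $K$. The structure of the argument is: $(3) \Rightarrow (2)$ is trivial (take $K = 2\cn$); $(2) \Rightarrow (1)$ reads off the Fra\"iss\'e-$\delta$ from the uniform-continuity modulus of $\mathrm{Id}$ in the direction $d_{\mathrm{BM}} \to d$ on $(S_E^n)_{2\cn}$; $(1) \Rightarrow (4)$ uses the Fra\"iss\'e property (which now provides a single $\delta$ for all $k$-dimensional $X$, hence a modulus independent of $x$) together with Fact \ref{conti}; and $(4) \Rightarrow (3)$ is the delicate implication, where I would need to pass from the $(S_E^n)_{\emptyset}$-statement to the $(S_E^n)_K$-statement and recover uniform continuity in \emph{both} directions.

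First I would do $(2) \Rightarrow (1)$. Given $\varepsilon>0$ and a dimension $k$, I take $n = k$; by hypothesis $\mathrm{Id}\colon ((S_E^k)_{2\cn}, d_{\mathrm{BM}}) \to ((S_E^k)_{2\cn}, d)$ is uniformly continuous, so there is $\alpha>0$ with $d_{\mathrm{BM}}(x,y)<\alpha \Rightarrow d(x,y)<\varepsilon/k$ for all $x,y \in (S_E^k)_{2\cn}$. Now let $X \in \mathrm{Age}_k(E)$, pick a $\cn$-basis $x = (x_1,\ldots,x_k)$ of $X$, and let $\phi_1,\phi_2 \in \mathrm{Emb}_\delta(X,E)$ for $\delta$ small (to be fixed); set $y^i = (\phi_i x_1,\ldots,\phi_i x_k)$, which lies in $(S_E^k)_{2\cn}$ once $\delta$ is small enough, and $d_{\mathrm{BM}}(y^1,y^2) < \alpha$ (the composition $\phi_2 \phi_1^{-1}$ is a $(1+\alpha')$-isomorphism of $[y^1]$ onto $[y^2]$ for $\delta$ small). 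Hence $d(y^1,y^2)<\varepsilon/k$, giving $T \in \mathrm{Isom}(E)$ with $\|Ty^1 - y^2\| < \varepsilon/k$, i.e. $\|T\phi_1 x_j - \phi_2 x_j\| < \varepsilon/k$ for each $j$; projecting via a norm-$\leq k$ projection onto $[y^1]$ (as in Remark \ref{AUH implies UH} or the proof of Proposition \ref{Fraisse implica AF}) converts this into $\|T\circ\phi_1 - \phi_2\| < \varepsilon$. The key point that makes this \emph{uniform} and hence yields Fra\"iss\'eness (not just weak Fra\"iss\'eness) is that $\alpha$, hence the final $\delta$, depends only on $k = n$ and not on $X$ — exactly because the hypothesis in (2) is uniform continuity on the \emph{whole} set $(S_E^n)_{2\cn}$.

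Next, $(1) \Rightarrow (4)$: continuity of $\mathrm{Id}\colon((S_E^n)_\emptyset,d) \to ((S_E^n)_\emptyset,d_{\mathrm{BM}})$ is Fact \ref{conti}, so it remains to show the inverse is uniformly continuous on $(S_E^n)_\emptyset$. Given $\varepsilon>0$, apply the Fra\"iss\'e property with dimension $k \leq n$ and target accuracy $\varepsilon/n$ to get $\delta>0$ (uniform in $X$); then for $x,y \in (S_E^n)_\emptyset$ with $d_{\mathrm{BM}}(x,y)<\delta$, the span $[x]$ has some dimension $k \leq n$ and $y = t(x)$ for a $(1+\delta)$-embedding $t$ of $[x]$, so Fra\"iss\'eness produces $T \in \mathrm{Isom}(E)$ with $\|T|_{[x]} - t\|<\varepsilon/n$, whence $\|Tx - y\| \leq n \cdot \varepsilon/n = \varepsilon$; since $\delta$ did not depend on $x$, this is uniform continuity of the inverse. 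Finally $(3) \Rightarrow (2)$ is immediate, and $(4) \Rightarrow (3)$ is handled by the decomposition $(S_E^n)_K \subseteq (S_E^n)_\emptyset$ together with the fact (already noted before Fact \ref{continuity uniform of Id}) that $(S_E^n)_K$ is $d$- and $d_{\mathrm{BM}}$-closed, plus Fact \ref{continuity uniform of Id} for the forward direction; for the inverse direction, uniform continuity of $\mathrm{Id}^{-1}$ on $(S_E^n)_\emptyset$ from (4) restricts to each $(S_E^n)_K$. The main obstacle I anticipate is the bookkeeping in $(4) \Rightarrow (3)$ and $(2) \Rightarrow (1)$: one must be careful that $(S_E^n)_K$ need not be $d$-complete or that the modulus given by (4) on all of $(S_E^n)_\emptyset$ genuinely restricts, and that the basis constant of the image $n$-tuple stays controlled (e.g. by $2\cn$) when applying a $(1+\delta)$-embedding — this is the ``classical estimates'' step already invoked in Fact \ref{conti}, which I would reuse verbatim.
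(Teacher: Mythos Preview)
Your proposal is correct and follows essentially the same cycle $(3) \Rightarrow (2) \Rightarrow (1) \Rightarrow (4) \Rightarrow (3)$ as the paper, with only cosmetic differences (the paper's $(2) \Rightarrow (1)$ compares the basis $x$ with a single image $y=tx$ rather than two images $y^1,y^2$, and bounds $\|T|_{[x]}-t\|$ via the basis constant instead of invoking a projection). Your worry about $(4) \Rightarrow (3)$ being delicate is unfounded: since $(S_E^n)_K \subseteq (S_E^n)_\emptyset$, the uniform-continuity modulus of $\mathrm{Id}^{-1}$ from (4) restricts verbatim, and the forward direction is exactly Fact~\ref{continuity uniform of Id}.
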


 \begin{proof} $(3) \Rightarrow (2)$ is obvious. $(4) \Rightarrow (3)$  holds because of Fact \ref{continuity uniform of Id}  and because
 $(S_E^n)_{\emptyset}=\cup_{K \geq 1} (S_E^n)_K$.
   $(2) \Rightarrow (1)$: 
  Fix $X \in\mathrm{Age}(E)$ and $x=(x_1,\ldots,x_n) \in S_E^n$  a $\cn$-basis of $X$. 
  If $t$ is an $(1+\alpha)$-isometric embedding of $[x]$ into $E$, let
  $y=(y_1,\ldots,y_n)=(tx_1,\ldots,tx_n)$. Then $y$ is a $2\cn$-basis if $\alpha$ was chosen small enough. Then we deduce from $d_{\mathrm{BM}}(x,y)<\alpha$ that $d(x,y)<\varepsilon$, i.e. there exists $T \in {\rm Isom}(E)$ so that 
  $\|Tx-y\| <\varepsilon$,
  so $\|T|_{[x]}-t\|$ is small if $\varepsilon$ was well chosen.

  $(1) \Rightarrow (2)$: because of Fact \ref{conti} for $\emptyset$, we just need to prove that for any $n$ and any $\varepsilon>0$,
  there exists $\alpha>0$, such that 
   for any $x,y \in (S_E^n)_{\emptyset}$,
  $d_{\mathrm{BM}}(x,y)<\alpha \Rightarrow d(x,y)<\varepsilon$. 
  Fix  some $n$, and fix $\varepsilon>0$.
  If $d_{\mathrm{BM}}(x,y)<\alpha$, i.e. there is $t$ an $(1+\alpha)$-isometric embedding of $[x]$ into $E$ defined by $tx_i=y_i$, 
  if $\alpha$ was chosen small enough then  there exists $T \in {\rm Isom}(E)$ so that $\|T|_{[x]}-t\|<\varepsilon$,
  therefore
  $\|Tx_i-y_i\| <\varepsilon$ for all $i$,
  and $\|Tx-y\| \leq n\varepsilon$, so $d(x,y)<n\varepsilon$.
 \end{proof}
 
 \subsection{On $\omega$-categorical spaces}\label{omega-categoricity}
 
We finally recall the notion of $\omega$-categoricity of Banach spaces, see \cite{ben-yaacov I} for a very general study. Recall the definition of the pseudometric $d$ from Definition \ref{def51}.

\begin{definition}
 A Banach space $E$ is $\omega$-categorical when the quotient $Q_n$ of $S_E^n$ by the orbit relation of the action of ${\rm Isom}(E)$ is $\tilde{d}$-compact. 
\end{definition}

All $L_p(0,1)$-spaces, $1 \leq p<+\infty$, and the Gurarij space $\mathbb G$ are examples of $\omega$-categorical \cite[Section 17]{ben-yaacov I} and \cite[Section 2]{ben-yaacov II}, respectively. Moreover, $\omega$-categorical Banach spaces contains isometric copies of $c_0$ or $\ell_p$, $1\leq p<\infty$ \cite[Corollary 5.13]{khanaki}. 

\begin{theorem}\label{characterization of fraisseness}
 A  Banach space is Fra\"iss\'e if and only if it is
 \textbf{AUH} and $\omega$-categorical. 
\end{theorem}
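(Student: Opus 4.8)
The plan is to prove both implications by working with the pseudometrics $d$, $d_{\mathrm{BM}}$ on the spaces $(S_E^n)_K$ of basic sequences with bounded constant, exploiting Fact \ref{equivalence of Fraisseness} which already reduces Fra\"iss\'eness to the statement that $\mathrm{Id}\colon((S_E^n)_{\sqrt n},d)\to((S_E^n)_{\sqrt n},d_{\mathrm{BM}})$ is a uniform homeomorphism for all $n$. So the whole argument reduces to relating uniform continuity of $\mathrm{Id}^{-1}$ (i.e. the nontrivial direction of Fact \ref{equivalence of Fraisseness}(2)) to $\omega$-categoricity, under the standing assumption of \textbf{AUH}.

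\textbf{The easy direction (Fra\"iss\'e $\Rightarrow$ \textbf{AUH} and $\omega$-categorical).} That Fra\"iss\'e implies \textbf{AUH} is immediate: taking $\phi_1$ to be an isometric embedding and $\phi_2=\psi$ another, the Fra\"iss\'e condition at any $\delta$ applies since isometric embeddings lie in $\mathrm{Emb}_\delta$, giving $T\in\mathrm{Isom}(E)$ with $\|T\phi_1-\psi\|<\varepsilon$. For $\omega$-categoricity: fix $n$ and consider $(S_E^n)_{\sqrt n}$, which by Fact \ref{equivalence of Fraisseness} is such that $\mathrm{Id}$ is a uniform homeomorphism onto $((S_E^n)_{\sqrt n},d_{\mathrm{BM}})$. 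The key point is that $((S_E^n)_{\sqrt n},d_{\mathrm{BM}})$ has $d_{\mathrm{BM}}$-totally-bounded quotient: $d_{\mathrm{BM}}$-balls correspond to Banach-Mazur neighborhoods in $\mathrm{Age}_n(E)$, and while $\mathrm{Age}_n(E)$ need not be $\mathrm{BM}$-compact, I must show total boundedness of the quotient of $S_E^n$, which follows because every $n$-dimensional space embeds $\sqrt n$-isometrically into $\ell_2^n$ hence the $d_{\mathrm{BM}}$-quotient of $(S_E^n)_{\sqrt n}$ is a continuous image of a compact set — actually the cleanest route is: the quotient of $(S_E^n)_{\sqrt n}$ by $\sim_{\mathrm{BM}}$ is compact because it is a bounded subset of the (compact) Banach-Mazur compactum of $n$-dimensional spaces-with-distinguished-tuple, and uniform homeomorphism transports compactness of the quotient. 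Then $Q_n$ (the $d$-quotient of all of $S_E^n$, not just $(S_E^n)_{\sqrt n}$) is compact since every point of $S_E^n$ is within $d$-distance $0$... no: one must cover $S_E^n$ by the finitely many sets $(S_E^n)_{\mathcal R}$ up to the generating relations, handle each stratum, and note $(S_E^n)_{\sqrt n}$ is $d$-dense in the union of the top strata while lower-dimensional strata reduce to smaller $n$; an induction on $n$ closes this. The main subtlety here is that $d$-density of $(S_E^n)_K$ inside $S_E^n$ and the lower-semicontinuity/perturbation arguments combine to give $Q_n$ compact.

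\textbf{The hard direction (\textbf{AUH} $+$ $\omega$-categorical $\Rightarrow$ Fra\"iss\'e).} Assume $E$ is \textbf{AUH} and $\omega$-categorical, and suppose toward a contradiction that $E$ is not Fra\"iss\'e. By Fact \ref{equivalence of Fraisseness}, for some $n$ the map $\mathrm{Id}\colon((S_E^n)_{\sqrt n},d)\to((S_E^n)_{\sqrt n},d_{\mathrm{BM}})$ fails to be a uniform homeomorphism; since it is a continuous bijection (Fact \ref{conti}), the failure is that $\mathrm{Id}^{-1}$ is not uniformly continuous. So there are $\varepsilon_0>0$ and sequences $x^k,y^k\in(S_E^n)_{\sqrt n}$ with $d_{\mathrm{BM}}(x^k,y^k)\to 0$ but $d(x^k,y^k)\geq\varepsilon_0$. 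Now use $\omega$-categoricity: passing to a subsequence, $[x^k]\to\xi$ in the compact quotient $Q_n$, so we may choose representatives and $T_k\in\mathrm{Isom}(E)$ with $\|T_kx^k - x^\infty\|\to 0$ for some fixed $x^\infty\in S_E^n$ (using completeness of $d$, Fact \ref{completeness of d}). Replacing $x^k$ by $T_kx^k$ and $y^k$ by $T_ky^k$ (which is $d$-invariant and preserves $d_{\mathrm{BM}}$, and keeps the basis constant $\leq 2\sqrt n$ eventually), we may assume $x^k\to x^\infty$ in norm. Since $d_{\mathrm{BM}}(x^k,y^k)\to 0$ and $d_{\mathrm{BM}}$ is continuous with respect to $d$ on a fixed stratum with controlled basis constant (Fact \ref{continuity uniform of Id} applied around $x^\infty$), it follows that $d_{\mathrm{BM}}(x^\infty,y^k)\to 0$, and then also $y^k\to x^\infty$ in $d_{\mathrm{BM}}$-sense... more precisely we get a $(1+\alpha_k)$-isometric map $t_k$ from $[x^\infty]$ to $[y^k]$ with $t_kx^\infty_i$ close to $y^k_i$. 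By \textbf{AUH} applied at $[x^\infty]$ (the space is \textbf{AUH}, so $\mathrm{Id}^{-1}$ is continuous at the single point $x^\infty$ by the argument in the proof of Fact \ref{equivalence of Fraisseness}, $(1)\Rightarrow(4)$ direction for weak Fra\"iss\'e, using \textbf{AUH}: there is $S_k\in\mathrm{Isom}(E)$ with $\|S_k|_{[x^\infty]}-t_k\|\to 0$, hence $\|S_kx^\infty-y^k\|\to 0$), we obtain $d(x^\infty,y^k)\to 0$, hence $d(x^k,y^k)\leq d(x^k,x^\infty)+d(x^\infty,y^k)\to 0$, contradicting $d(x^k,y^k)\geq\varepsilon_0$.

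\textbf{Main obstacle.} The delicate point is the transfer step: after using $\omega$-categoricity to move $x^k$ to a convergent sequence, one must check that (a) the translates $T_ky^k$ still have controlled basis constant so that Fact \ref{continuity uniform of Id} applies on a fixed stratum, and (b) $d_{\mathrm{BM}}(x^k,y^k)\to 0$ genuinely forces $d_{\mathrm{BM}}(x^\infty,y^k)\to 0$ — this is a continuity statement for $d_{\mathrm{BM}}$ along a $d$-convergent sequence, valid precisely because on $(S_E^n)_K$ the two topologies agree (Fact \ref{continuity uniform of Id}) but which requires that $y^k$ eventually lies in a fixed $(S_E^n)_K$; the basis constant of $y^k$ is controlled by that of $x^k$ (hence of $x^\infty$) up to the factor $(1+d_{\mathrm{BM}}(x^k,y^k))$, so this works out. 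Handling the stratification $(S_E^n)_{\mathcal R}$ carefully — ensuring the pointwise \textbf{AUH}-continuity of $\mathrm{Id}^{-1}$ at a single point upgrades, via $\omega$-categorical compactness, to the uniform statement needed for Fra\"iss\'eness — is the conceptual heart, and is exactly the standard "pointwise continuity plus compactness equals uniform continuity" principle, here realized through the compact quotient $Q_n$ rather than a compact $(S_E^n)_{\sqrt n}$ itself.
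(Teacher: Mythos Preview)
Your argument for the hard direction (\textbf{AUH} $+$ $\omega$-categorical $\Rightarrow$ Fra\"iss\'e) contains a genuine gap. At the crucial step you assert that ``\textbf{AUH} applied at $[x^\infty]$'' produces $S_k\in\mathrm{Isom}(E)$ with $\|S_k|_{[x^\infty]}-t_k\|\to 0$, where the $t_k$ are only $(1+\alpha_k)$-isometric embeddings with $\alpha_k\to 0$. But this is exactly the \emph{weak Fra\"iss\'e} property at the single subspace $[x^\infty]$, not \textbf{AUH}. The \textbf{AUH} hypothesis only says that \emph{exact} isometric embeddings can be approximated by restrictions of global isometries; equivalently (Fact~\ref{ultrahomogeneity and the metric d_BM}(iii)), it says $d_{\mathrm{BM}}(x,y)=0\Rightarrow d(x,y)=0$, and nothing about $d_{\mathrm{BM}}$ merely small. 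So you cannot deduce pointwise continuity of $\mathrm{Id}^{-1}$ from \textbf{AUH} alone --- indeed, whether \textbf{AUH} implies weak Fra\"iss\'e is open.

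The repair is to use $\omega$-categoricity a \emph{second} time: also extract a $d$-convergent subsequence of $(y^k)$, say $y^k\to y^\infty$. Fact~\ref{continuity uniform of Id} then gives $d_{\mathrm{BM}}(x^\infty,y^\infty)=0$, and now \textbf{AUH} legitimately yields $d(x^\infty,y^\infty)=0$, contradicting $d(x^k,y^k)\geq\varepsilon_0$. Once patched in this way, your argument is simply an unpacking of the paper's one-line observation: under \textbf{AUH}, $\mathrm{Id}\colon((S_E^n)_K,\tilde d)\to((S_E^n)_K,\tilde d_{\mathrm{BM}})$ is a continuous bijection between metric spaces whose domain is compact (since $(S_E^n)_K$ is $d$-closed and $Q_n$ is compact), hence automatically a uniform homeomorphism; Fact~\ref{equivalence of Fraisseness}(3) then finishes. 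Your ``pointwise continuity plus compactness gives uniform continuity'' slogan is correct, but the pointwise continuity you need is of the \emph{forward} map, not the inverse.

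For the easy direction your sketch is in the right spirit but too vague. The paper argues directly with sequences: given $(y_k)$ in $S_E^n$, pass to a subsequence on which the linear-dependence pattern $\mathcal R$ and a basis constant $K$ for an independent subfamily $(x_i^k)_{i\in I}$ are fixed; total boundedness of the relevant Banach--Mazur type quotient gives a $d_{\mathrm{BM}}$-Cauchy subsequence; Fact~\ref{equivalence of Fraisseness} converts this to $d$-Cauchy; and completeness of $d$ (Fact~\ref{completeness of d}) gives $d$-convergence in $S_E^n$. Your appeals to ``induction on $n$'' and the compactum of ``spaces-with-distinguished-tuple'' point in the right direction but do not yet constitute a proof.
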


\begin{proof} Fix $K \geq 1$. Fix a \textbf{AUH} space $E$,
 and consider the map
 $${\rm Id}\colon ((S_E^n)_K,\tilde{d}) \rightarrow ((S_E^n)_K,\tilde{d}_{\mathrm{BM}}).$$
 By Facts \ref{ultrahomogeneity and the metric d_BM} and \ref{continuity uniform of Id} it defines a (uniformly) continuous bijective map between metric spaces.
 
 If $E$ is $\omega$-categorical
 and since $(S_E^n)_K$ is $\tilde{d}$-closed in $S_E^n$, the domain of this ${\rm Id}$-map is compact. Therefore ${\rm Id}$ is a uniform homeomorphism.  We conclude by Fact \ref{equivalence of Fraisseness}(3).
 
 Conversely assume $E$ is Fra\"iss\'e. Let $y_k=(x_1^k,\ldots,x_n^k)$ be a sequence in
 $S_E^n$. We describe how to find a $\tilde{d}$-converging subsequence of $y_k$
 
 We first look at $[x_1^k,x_2^k]$. Passing to a subsequence
 either this is basic with a fixed constant $K_2$, or $d(x_2^k,\mathbb{R} x_1^k)$ tends to $0$. In the second case, we may find $\lambda$ 
 such that $d(x_2^k, \lambda x_1^k)$ tends to $0$, and therefore assume wlog that $x_2^k=\lambda x_1^k$. Repeating this at each step  we obtain $I \subseteq \{1,\ldots,n\}$, $K$ and ${\mathcal R}$ such that we may assume wlog that for all $k$
 \begin{itemize}
     \item $\{x_i^k\,\colon\, i \in I\}$ is a  $K$-basic sequence
     \item $[y_k]=[x_i^k\,\colon\, i \in I]$
     \item $\{x_1^k, \ldots, x_n^k\} \in (S_E^n)_{\mathcal R}$.
 \end{itemize}
 Without loss of generality we may assume that $\{x_i^k\,\colon\, i \in I\}$ is $d_{\mathrm{BM}}$-convergent and therefore $\{x_1^k,\ldots,x_n^k\}$ as well. The $d_{\mathrm{BM}}$-limit  $\{z_1,\ldots,z_n\}$ of $\{x_1^k,\ldots,x_n^k\}$
 will satisfy the three conditions that
  $\{z_i\,\colon\, i \in I\}$ is a  $K$-basic sequence, $[z_i\,\colon\, 1\leq i\leq n]=[z_i\,\colon\, i \in I]$, and
     $\{z_1, \ldots, z_n\} \in (S_E^n)_{\mathcal R}$. Now, from
     Fact \ref{equivalence of Fraisseness}, it follows that $\{x_1^k,\ldots,x_n^k\}$ is $d$-Cauchy, and hence $d$-convergent in $S_E^n$ by Fact \ref{completeness of d}. So,  the limit has to coincide with $\{z_1,\ldots,z_n\}$.  In the end we have that $\{x_i^k\,\colon\, i \in I\}$ is $d$-convergent to $\{z_i\,\colon\, i \in I\}$. Since they all belong to
 $(S_E^n)_{\mathcal R}$, we finally have $d$-convergence of
 $y_k$ to $\{z_1,\ldots,z_n\}$.
\end{proof}

$L_p$ spaces for $p=4,6,\ldots$ are examples of $\omega$-categorical spaces which are not Fra\"iss\'e. The proof from \cite[Chapter 4]{ferenczi et all} that for $p 
\notin 2\mathbb N+4$, $L_p[0,1]$ is Fra\"iss\'e, relies on technical estimates regarding approximate versions of the equimeasurability theorem of Plotkin and Rudin. In some sense from that proof one could hope to obtain some explicit estimates regarding the parameters $\varepsilon$ and $\delta$ of the Fra\"iss\'e property. However as Ben Yaacov says \cite{ben-yaacov III},  one can use Theorem \ref{characterization of fraisseness}
to obtain an abstract, model theoreric proof of the result of \cite{ferenczi et all}. Just combine the result of Lusky that $L_p(0,1)$ is approximately ultrahomogeneous for $p 
\notin 2\mathbb N+4$, with the $\omega$-categorical property of $L_p$-spaces. 

\section{Final remarks and Questions}

In this section we propose some questions that emerged throughout this work. Answering those questions could in our view give a better understanding of the theory.

\begin{question}
    Is the $\ell_\infty$-sum of \textbf{FIE} (\textbf{aFIE}) Banach spaces a \textbf{FIE} (\textbf{aFIE}, respectively) Banach space?
\end{question}

Example \ref{FIE-ness of sums of c_0(a)} says that answer is yes for the \textbf{FIE}-property of arbitrary $\ell_\infty$-sums of $c_0(\Gamma)$.

\begin{question}
Are the properties \textbf{aFIE} and \textbf{FIE} equivalent?
\end{question}

We know that answer is yes for reflexive spaces (see (2) of Proposition \ref{afie-ness y complementos}).

\vspace{1.5mm}

Corollary \ref{AFandwAF} motivates the next two questions.

\begin{question}
   When does the \textbf{AF}-property imply that 
$\mathrm{Age}_k(E)$ is compact for all $k$? 
\end{question}

\begin{question}
    When does the weak \textbf{AF} property imply the \textbf{AF} property?
\end{question}

Another natural question is:

\begin{question}
If $X$ and $Y$ are almost Fra\"iss\'e and almost isometric, must they be isometric?
\end{question}

In \cite[Problem 2.9]{ferenczi et all}, it is asked what other separable spaces different from $\mathbb G$ and $L_p(0,1)$, $p\not\in 2\mathbb N+4$, are Fra\"iss\'e. Another related question is the following:

\begin{question}
    Are $\mathbb G$, $L_p(0,1)$ for $p\not\in 2\mathbb N+4$ and the Hilbert space $\ell_2$ the only separable almost ultrahomogeneous Banach spaces?
\end{question}

The notion of $\omega$-categoricity could suggest defining and studying the next weaker property:

\begin{definition}
 A Banach space $E$ is \textit{almost $\omega$-categorical} if the quotient $Q_n$ of $S_E^n$ by the orbit relation of the action of ${\rm Isom}(E)$ is $\tilde{d_a}$-compact. 
\end{definition}

Since $d_a \leq d$, the identity map on $S_E^n$ with respect to $d$ and $d_a$ is continuous and therefore any $\omega$-categorical space is also almost $\omega$-categorical. Inspired by Theorem \ref{characterization of fraisseness} we ask:

\begin{question}
Is a Banach space almost Fra\"iss\'e if and only if it is
 almost ultrahomogeneous and almost $\omega$-categorical?
\end{question}

 There seem to be technical difficulties with lack of completeness of $d_a$. We conjecture that answer is yes if we add as hypothesis that the age of the space is BM-closed.

\section*{Acknowledgments}

 The research of this paper was developed during a postdoctoral stay of the second author supported by Funda\c c\~ao de Apoio \`a Pesquisa do Estado de S\~ao Paulo, FAPESP, Processos 2016/25574-8 and 2021/01144-2, and UIS.


\begin{thebibliography}{99}

\bibitem{aviles et al} Avil\'es, Antonio; S\'anchez, F\'elix Cabello; Castillo, Jes\'us M. F.; Gonz\'alez, Manuel; Moreno, Yolanda. \textit{Separably injective Banach spaces.} Lecture Notes in Mathematics, 2132. Springer, [Cham], 2016. 

\bibitem {AK}  Albiac, Fernando; Kalton, Nigel J. \textit{Topics in Banach space theory.} Second edition. With a foreword by Gilles Godefroy. Graduate Texts in Mathematics, 233. Springer, [Cham], 2016. 

\bibitem{banach} S. Banach. \textit{Théorie des opérations linéaires.} Editions Jacques Gabay, Sceaux, 1993. Reprint of the 1932 original.

\bibitem{ben-yaacov I} I. Ben-Yaacov, A. Berenstein, C. W. Henson, and A. Usvyatsov, \textit{Model theory for metric structures}, in: Model Theory with Applications to Algebra and Analysis, Vol. 2, edited by Z. Chatzidakis, D. Macpherson, A. Pillay, and A. Wilkie, London Mathematical Society Lecture Note Series Vol. 350 (Cambridge University Press, 2008), pp. 315–427.

\bibitem{ben-yaacov II}I. Ben-Yaacov and C. W. Henson, \textit{Generic orbits and type isolation in the Gurarij space}, Fund. Math. 237, 47–82 (2017).

\bibitem{ben-yaacov III}I. Ben-Yaacov, personal communication.

\bibitem{cabello-sanchez} Cabello-S\'anchez, F. \textit{Regards sur le probl\`eme des rotations de Mazur}. Extracta Math. 12 (1997), 97-116.

\bibitem{ferenczi-cabello-beata} Cabello S\'anchez, F., Ferenczi, V. \& Randrianantoanina, B. \textit{On Mazur rotations problem and its multidimensional versions.} São Paulo J. Math. Sci. 16, 406–458 (2022). 



\bibitem{castillo} Castillo, Jesús M. F.; Plichko, Anatolij. \textit{Banach spaces in various positions.} J. Funct. Anal. 259 (2010), no. 8, 2098–2138.

\bibitem{castillo-moreno-ferenczi}  Castillo, J. M. F.; Ferenczi, V.; Moreno, Y. \textit{On uniformly finitely extensible Banach spaces.} J. Math. Anal. Appl. 410 (2014), no. 2, 670–686.

\bibitem{ferenczi-lopez} Ferenczi, V.; Lopez-Abad, J.; \textit{Envelopes in Banach spaces.} Preprint.

\bibitem{ferenczi et all} Ferenczi, V.; Lopez-Abad, J.; Mbombo, B.; Todorcevic, S.
\textit{Amalgamation and Ramsey properties of $L_p$ spaces.}
Adv. Math. 369 (2020), 107190, 76 pp.



\bibitem{khanaki} Khanaki, Karim.
\textit{$\aleph_0$-categorical Banach spaces contain $\ell_p$ or $c_0$}. 
MLQ Math. Log. Q. 67 (2021), no. 4, 469–488.

\bibitem{lusky} Lusky, Wolfgang.
\textit{Some consequences of W. Rudin's paper: ``$L_p$-isometries and equimeasurability''}. 
Indiana Univ. Math. J. 27 (1978), no. 5, 859–866. 

\bibitem{moreno} Moreno, Yolanda; Plichko, Anatolij. \textit{On automorphic Banach spaces.} Israel J. Math. 169 (2009), 29–45.

\bibitem{pel-rolewicz} A. Pe\l czy\'nski, S. Rolewicz, \textit{Best norms with respect to isometry groups in normed linear spaces,}
in: Short Communication on International Math. Congress in Stockholm, 1962, p. 104.

\end{thebibliography}
\end{document}